\let\cal\mathscr
\newcommand \Om {\Omega}
\newcommand \om {\omega}
\renewcommand \i {\sqrt{-1}}
\renewcommand \leq {\leqslant}
\renewcommand \geq {\geqslant}
\newcommand{\norm}[1]{\left\Vert #1\right\Vert}
\newcommand \delt {\partial_t}
\DeclareMathOperator{\End}{End}
\DeclareMathOperator{\Tr}{Tr}
\DeclareMathOperator{\Ker}{Ker}
\DeclareMathOperator{\Spec}{Spec}
\DeclareMathOperator{\Td}{Td}
\DeclareMathOperator{\ch}{ch}
\DeclareMathOperator{\re}{Re}
\DeclareMathOperator{\SU}{SU}
\DeclareMathOperator{\Diff}{Diff}
\DeclareMathOperator{\ad}{ad}
\DeclareMathOperator{\Aut}{Aut}
\DeclareMathOperator{\CS}{CS}
\newcommand \dbar {\overline{\partial}}
\newcommand \< {\mathcal{h}}
\renewcommand \> {\mathcal{i}}
\newcommand \cinf {\CC^\infty}
\newcommand \Id {{\rm Id}}
\renewcommand \epsilon {\varepsilon}
\renewcommand \AA {{\cal A}}
\newcommand \CC {{\cal C}}
\newcommand \HH {{\cal H}}
\newcommand \JJ {{\cal J}}
\newcommand \MM {{\cal M}}
\newcommand \PP {{\cal P}}
\newcommand \TT {{\cal T}}
\newcommand \Q {{\mathcal Q}}
\newcommand \Tau {\mathcal T}
 \def\cC{\mathscr{C}}
\def\cO{\mathscr{O}}
\def\Im{{\rm Im}}
\def\cP{\mathscr{P}}
\def\cK{\mathscr{K}}
\def\cT{\mathscr{T}}
\newcommand{\til}[1]{\widetilde{#1}}
\newcommand \dt {\frac{\partial}{\partial t}}
\newcommand \du {\frac{\partial}{\partial u}}
\newcommand \D[1] {\frac{\partial}{\partial #1}}
\newcommand \Dk[2] {\frac{\partial^{#1}}{\partial {{#2}^{#1}}}}
\newcommand \R {\mathbb R}
\newcommand \C {\mathbb C}
\newcommand \N {\mathbb N}
\newcommand \Z {\mathbb Z}
\newcommand \fl {\rightarrow}
\newcommand \ignore[1] {}
\theoremstyle{plain}
\newtheorem{theorem}{Theorem}[section]
\newtheorem{lem}[theorem]{Lemma}
\newtheorem{cor}[theorem]{Corollary}
\newtheorem{prop}[theorem]{Proposition}
\theoremstyle{definition}
\newtheorem{defi}[theorem]{Definition}
\newtheorem{nota}[theorem]{Notation}
\numberwithin{equation}{section}
\crefname{equation}{}{}
\newtheorem*{ackn*}{Acknowledgements}
\crefname{lem}{Lemma}{Lemmas}
\crefname{theorem}{Theorem}{Theorems}
\crefname{cor}{Corollary}{Corollaries}
\crefname{ex}{Example}{Examples}
\crefname{defi}{Definition}{Definitions}
\crefname{prop}{Proposition}{Propositions}
\crefname{section}{Section}{Sections}
\crefname{subsection}{Section}{Sections}
\crefname{rmk}{Remark}{Remarks}
\crefname{nota}{Notation}{Notations}
\begin{document}

\title{\bf{Geometric quantization of symplectic maps
and Witten's asymptotic conjecture}}
\author{Louis IOOS}
\date{}

\maketitle

\begin{abstract}

We use the theory of Berezin-Toeplitz operators of
Ma and Marinescu to study the spaces of holomorphic sections
of a prequantizing line bundle over compact Kähler manifolds
under deformations of the complex structure. We show that
the parallel transport in the induced vector bundle over
the deformation space behaves like a Toeplitz operator, and compute
its first coefficient. We then use this result
to establish a semi-classical trace formula for the induced
quantization of symplectic maps, and give an application to
Witten's asymptotic expansion conjecture for the quantum
representations of the mapping class group.
\end{abstract}

\todo{garder "théorie de BT op de MM"?}


\section{Introduction}
%

Geometric quantization is a set of geometric
methods to construct a quantum mechanical system, represented
by a Hilbert space of quantum states, from the underlying
system of classical mechanics, represented by a symplectic manifold
$(X,\om)$. In this context, we require
$(X,\om)$ to be \emph{prequantized}, so that
$X$ is endowed with a Hermitian line bundle $(L,h^L)$
together with a Hermitian connection
$\nabla^L$ of curvature $R^L\in\Om^2(X,\C)$ satisfying
\begin{equation}\label{preq}
\om=\frac{\sqrt{-1}}{2\pi} R^L\,.
\end{equation}
Assume now that $(X,\om)$ admits a compatible
integrable complex structure
$J\in\End(TX)$, making $(X,J,\om)$ into a \emph{Kähler manifold}
and $(L,h^L,\nabla^L)$ into a holomorphic Hermitian line
bundle equipped with its \emph{Chern connection}. 
Then the \emph{Kähler quantization} of $(X,\om)$ at level
$p\in\N^*$ is the Hilbert space $\HH_p$ of
$L^2$-holomorphic sections of
$L^p:=L^{\otimes p}$ for the natural
$L^2$-Hermitian product \cref{L2}. The integer $p\in\N^*$
represents a quantum number, usually inversely proportional
to the \emph{Planck constant}, and
asymptotic results when $p$ tends to infinity are then supposed
to describe the so-called \emph{semi-classical limit}, when
the scale gets
so large that we recover the laws of classical mechanics as an
approximation of the laws of quantum mechanics.
In this paper, we will restrict to the case
of $(X,\om)$ \emph{compact} of dimension
$\dim X=2n$, so that $(X,\om)$ represents a bounded mechanical
system and the associated Hilbert space of quantum states $\HH_p$
is finite dimensional for all $p\in\N^*$.

A fundamental problem in this context
is the dependence of the quantization
on the choice of a complex structure $J\in\End(TX)$.
A natural way to study this question is to consider
the quantization of $(X,\om)$ at level $p\in\N^*$
as a Hermitian vector bundle $\HH_p$ over a space $B$ of compatible complex structures, whose fibre at $b\in B$
identifies with the induced Hilbert space $\HH_{p,b}$ of
holomorphic sections. One can then compare
the quantizations associated with different complex structures
via parallel transport with respect to the
natural \emph{$L^2$-connection} \cref{connectionL2} on the
vector bundle $\HH_p$ over $B$.
This point of view can be applied in particular
to the quantization of \emph{symplectic maps},
that is diffeomorphisms $\varphi:X\fl X$ preserving $\om$, which
we require in addition to lift to the prequantization
$(L,h^L,\nabla^L)$. In the particular case when $\varphi:X\fl X$
preserves the complex structure, one can define its quantization
at level $p\in\N^*$ as the induced unitary operator on
holomorphic sections of $L^p$, but a symplectic map does not preserves
any compatible complex structure in general.
\todo{mettre la quantif des symplectomorphismes après le Th.1?}
Instead, consider a path
\begin{equation}\label{Jt}
\{J_t\in\End(TX)\}_{t\in\R}
\end{equation}
of compatible complex structures such that $J_0:=J$ and
$J_1:=\varphi^*J$. Then for any $p\in\N^*$, there is an induced
pullback map
$\varphi^*_p:\HH_{p,1}\fl\HH_{p,0}$
from the space of holomorphic sections
of $L^p$ with respect to $J_1$ to the
space of holomorphic sections of $L^p$ with
respect to $J_0$.
One can then consider the parallel transport
$\Tau_p:\HH_{p,0}\fl\HH_{p,1}$ with respect
to the $L^2$-connection along this path to get by composition
a unitary operator
\begin{equation}\label{quantmap}
\varphi^*_p\Tau_p:\HH_{p,0}\longrightarrow\HH_{p,0}\,,
\end{equation}
giving a geometric definition
of the quantization of the symplectic map $\varphi:X\fl X$.
%

In this paper, we use the theory of Bergman kernels
of Ma and Marinescu in \cite{MM08a}
to study the parallel transport $\Tau_p$
as $p\fl +\infty$. To describe our main result,
let $\pi:\R\times X\fl\R$ be the fibration of complex manifolds
with fibre $X_t:=(X,J_t)$ at $t\in\R$, and write
$\tau^{K_X}_t:K_{X_0}\fl K_{X_t}$ for
the parallel transport in the induced
\emph{relative canonical line bundle}
$K_X:=\det(T^{(1,0)*}X)$ over $\R\times X$ with respect
to the natural connection \cref{nabdt=PdtP} along
the horizontal directions of the fibration.
Using the identification\break
$T^{(0,1)}X_t\simeq T^{(1,0)*}X_t$
induced by the $\C$-bilinear form $g^{TX}_t:=\om(\cdot,J_t\cdot)$
over $TX_\C$ for all $t\in\R$, we denote by
$\det(\overline{\Pi_t^0}):K_{X,0}\fl K_{X,t}$ the
line bundle isomorphism induced by the projection
$\overline{\Pi_t^0}:T^{(0,1)}X_0\fl T^{(0,1)}X_t$
on $T^{(0,1)}X_t$ with kernel $T^{(1,0)}X_0$ inside
$TX_\C$.
The following theorem, which follows from
\cref{Approx} and \cref{mutgeom}, expresses the parallel transport
$\Tau_{p,t}:\HH_{p,0}\fl\HH_{p,t}$ with respect to the
$L^2$-connection \cref{connectionL2} as a \emph{Toeplitz operator}
from one quantum space to another, and is the central result of
this paper.
\begin{theorem}\label{Approxintro}
There exists a family of functions
$\{\mu_{l,t}\in\CC^\infty(X,\C)\}_{l\in\N}$,
smooth in $t\in[0,1]$, such that for all $k\geq 0$, there exists $C_k>0$ such that for all $p\in\N^*$,
\begin{equation}\label{genToepdefintro}
\Big\|\Tau_{p,t}-\sum_{l=0}^{k-1} p^{-l}\,P_{p,t\,}\mu_{l,t}\,P_{p,0}\Big\|\leq C_k p^{-k}\,,
\end{equation}
in operator norm, where $P_{p,t}:\cinf(X,L^p)\fl\HH_{p,t}$ is
the orthogonal projection with respect to the
$L^2$-Hermitian product on the
space of holomorphic sections of $L^p$ over $X_t$.

Furthermore, via the canonical isomorphism $\End(K_{X,0})\simeq\C$,
we have the following formula for the first coefficient,
\begin{equation}\label{Taucoeffint}
\overline{\mu}_{0,t}^{2}=\det(\overline{\Pi_t^0})^{-1}\tau^{K_X}_t\,.
\end{equation}
\end{theorem}

\cref{Approxintro} generalizes and refines
the asymptotic expansion established by Andersen in
\cite[Th.6]{And06} for the parallel transport in the endomorphism bundle $\End(\HH_p)$ as $p\fl+\infty$ induced by
\emph{Hitchin connections}, as defined in \cref{hitsec} following
\cite[\S\,1]{And12}. In particular, Andersen uses his expansion
to establish in \cite[Th.1]{And06} the asymptotic faithfulness
of the quantum representations of the mapping class group, while
our explicit formula \cref{Taucoeffint} for the first term
of the expansion \eqref{genToepdefintro} will be crucial here
to obtain Witten's formula for
the quantum representations of the mapping class group
in \cref{WRTintro,WRTintro2}.

On the other hand,
\cref{Approxintro} has already been applied
in \cite{Ioo19} to the quantization
of Hamiltonian flows $\varphi_t:X\fl X,\,t\in\R$, via the parallel transport
over the induced path $\{\varphi_t^*J\in\End(TX)\}_{t\in\R}$
of complex structures. In particular,
our explicit formula \eqref{Taucoeffint}
for the first coefficient is used in \cite[Th.1.2,\,Th.1.3]{Ioo19}
to establish a Gutzwiller trace formula in this
context. The parallel transport over complex structures
induced by Hamiltonian flows has also been studied by
Foth and Uribe in \cite{FU07}, where it is shown that it satisfies a Schrödinger equation, and by Charles in \cite{Cha07}, who studies
its semi-classical properties in the metaplectic case.
%
%

In \cref{locsec}, we use \cref{Approxintro} to estimate the
trace of the quantization \cref{quantmap} of a symplectic map
$\varphi:X\fl X$ as $p\fl +\infty$, showing in particular
that it localizes around the fixed point set $X^\varphi\subset X$
of $\varphi$. In the following theorem, which follows from
\cref{indeq} and \cref{coeffgeom},
we assume for simplicity that $X^\varphi$ is connected.
\begin{theorem}\label{indeqintro}

Assume that $X^\varphi$ is a smooth 
submanifold with
$TX^\varphi=\Ker(\Id_{TX}-d\varphi)$.
Then there are densities
$\nu_r$ over $X^\varphi$ for all $r\in\N$,
such that for any $k\in\N$ and as $p\fl +\infty$,
\begin{equation}\label{indeqfleintro}
\Tr_{\HH_p}[\varphi^*_p\Tau_p]=p^\frac{\dim X^\varphi}{2}
\lambda^p
\left(\sum_{r=0}^{k-1} p^{-r} \int_{X^\varphi}\nu_r
+O(p^{-k})\right)\,,
\end{equation}
where $\overline{\lambda}\in\C$ is the value of the action of
$\varphi:X\fl X$ on $L$ over $X^\varphi$.
Furthermore, there is an explicit local formula for $\nu_0$,
given in \cref{nu0}.

If $X^\varphi$ is a complex submanifold of $X$ and
if $\varphi$ preserves a complex subbundle $N$
transverse to $TX^\varphi\subset TX$ over $X^\varphi$, then
\begin{equation}\label{nu0intro}
\nu_0=(-1)^\frac{2n-\dim X^\varphi}{4}(\varphi^{K_X}\tau^{K_X,-1})^{1/2}
\det{}_{N}(\Id_N-d\varphi|_N)^{-1/2}|dv|_{TX/N},
\end{equation}
for some natural choices of square roots, where
$\tau^{K_X}:K_{X_0}\fl K_{X_1}$ is the parallel transport
with respect
to the natural connection \cref{nabdt=PdtP}
and $|dv|_{TX/N}$ is the density over $X^\varphi$
induced by $g^{TX}_0:=\om(\cdot,J_0\,\cdot)$ and the decomposition
$TX=TX^\varphi\oplus N$ as in \eqref{dvX=dvNdvXN}.
\end{theorem}


Note that the assumption of \cref{nu0intro} is
automatically satisfied when $\dim X^\varphi=0$ or when
$\varphi$ is holomorphic. The general version of \cref{indeqintro},
which is \cref{indeq}, gives 
a general formula for the higher order term of 
\eqref{indeqfleintro} without
the assumption of \cref{nu0intro}, and
includes the tensor product with a general
Hermitian vector bundle $E$ over the fibration
$\pi:\R\times X\fl\R$.
In \cref{isosec}, we first establish
the general version of \cref{indeqintro} as above in the case
$\dim X^\varphi=0$, which generalizes a result of Charles
in \cite[Th.5.3.1]{Cha10},
who only handles the \emph{metaplectic} case,
that is taking $E=K_X^{1/2}$ to be
a square root of the relative canonical bundle.
He then applies it in \cite[Th.1.2]{Cha16} to prove an asymptotic
projective version of Witten's asymptotic conjecture for the
quantum representations of the mapping class group.
As we will see below, it is crucial to be able to consider
the case without metaplectic correction to handle the
actual conjecture, as formulated in the language of modular
functors of Segal in \cite[\S\,5]{Seg04}.
Furthermore, \cref{indeqintro} deals with $X^\varphi$ of arbitrary
dimension, which allows to extend the conjecture to
that case.

In case the symplectic map $\varphi$ preserves the complex structure,
so that the pullback map $\varphi_p^*$ preserves $\HH_{p}$ for
all $p\in\N^*$, we recover via \cref{indeq}
the asymptotics of
$\Tr_{\HH_p}[\varphi_p^*]$ as $p\fl+\infty$
of the following \emph{equivariant Riemann-Roch-Hirzebruch} formula
\begin{equation}\label{eqRRH}
\Tr_{\HH_p}[\varphi_p^*]=\int_{X^\varphi}
\Td_{\varphi^{-1}}(T^{(1,0)}X)\ch_{\varphi^{-1}}(L^p)\,,
\end{equation}
where $\Td_{\varphi^{-1}}(T^{(1,0)}X)$ represents the
equivariant Todd class of $(X,J)$ and
$\ch_{\varphi^{-1}}(L^p)$ represents the equivariant Chern
character of $L^p$
for the action induced by $\varphi^{-1}$,
as defined in \cite[Def.\,1.5]{BG00}.

In \cref{Appli}, we show how \cref{indeqintro} can be applied
to establish \emph{Witten's asymptotic expansion conjecture}
for the quantum representations of the mapping class group,
as described for instance in \cite[Conj.1.1]{And13}, and 
to compute the formula given by Witten in \cite[(2.17)]{Wit89}
for the first coefficient.
For simplicity and concreteness, we will only consider the
case $X:=\MM$ to be the \emph{moduli space}
of gauge equivalence classes
of flat $\SU(m)$-connections
over $\Sigma\backslash D$ with holonomy around the boundary
equal to $e^{\frac{2\sqrt{-1}\pi d}{m}}$ 
for some fixed $d\in(\Z/m\Z)^*$, where
$\Sigma$ is a compact oriented surface of 
genus $g\geq 2$ and $D\subset\Sigma$ is an embedded disk.
Then as explained in \cref{MM,LCS}, the moduli space
$\MM$ admits a natural structure of a compact
prequantized symplectic manifold.
We then consider the map $\varphi:\MM\fl\MM$ naturally
induced by an orientation preserving diffeomorphism
$f\in\Diff^+(\Sigma,D)$
preserving $D$ pointwise. Then $\varphi$ is a smooth symplectic map
lifting naturally to the prequantization, and
only depends on $f\in\Diff^+(\Sigma,D)$
up to isotopies of $\Sigma$ preserving $D$, so that it represents
an element of the \emph{mapping class group} of
$\Sigma$ acting symplectically on $\MM$.

An element $\sigma\in\cT_\Sigma$ of the \emph{Teichmüller space}
of $\Sigma$ induces canonically a compatible
complex structure $J_\sigma\in\End(T\MM)$, and we can consider the associated quantum bundle $\HH_p$ over
$\cT_\Sigma$ for all $p\in\N^*$, called the \emph{Verlinde bundle}.
It is endowed with a canonical connection for all
$p\in\N^*$, which was
introduced independently by Hitchin in \cite[Th.3.6]{Hit90}
and Axelrod, S. Della Pietra and Witten in \cite[\S\,4.b]{ADW91}.
Note that this canonical connection is defined in \cite{Hit90} only
up to an additive scalar form, which is fixed canonically in \cite{ADW91}.
Let $\Tau_p$ be the induced parallel transport along a path
$\gamma:[0,1]\fl\cT_\Sigma$ joining $\sigma\in\cT_\Sigma$
to $f^*\sigma\in\cT_\Sigma$. Then
the trace
$\Tr_{\HH_p}[\varphi^*\Tau_p]$ is the
\emph{anomalous Witten-Reshetikhin-Turaev invariant}
of the mapping torus $\Sigma_f$
with one link lifting the canonical fibration
$\pi_f:\Sigma_f\fl\R/\Z$
colored by $e^{\frac{2\sqrt{-1}\pi d}{m}}$, as considered
in Segal's definition of conformal field theory in \cite[\S\,4]{Seg04}
after Witten's description in \cite[(2.1)]{Wit89} using path
integrals.

In fact, let $\MM_{f}$ be the moduli space
of gauge equivalence classes of flat $\SU(m)$-connections
over the mapping torus $(\Sigma\backslash D)_f$ of
$f\in\Diff^+(\Sigma,D)$ with holonomy around
the boundary equal to $e^{\frac{2\sqrt{-1}\pi d}{m}}$,
and assume that $\MM_f$ admits a smooth structure as in
\cref{MM}.
Then there is a smooth covering $r:\MM_f\fl\MM^\varphi$
defined by restriction on any fibre, where $\MM^\varphi\subset\MM$
is the fixed
point set of $\varphi$. For any $[A_f]\in\MM_f$,
the \emph{Chern-Simons invariant}
of $[A_f]\in\MM_f$ is given by the formula
\begin{equation}\label{corCS}
\CS([A_f])=\frac{m}{8\pi^2}\left(\Tr[\xi_1\xi_2]+
\int_{\Sigma_f}
\Tr\big[\alpha_f\wedge d\alpha_f+\frac{2}{3}
\alpha_f\wedge\alpha_f\wedge\alpha_f\big]\right)\in\R/\Z\,,
\end{equation}
where we took a connection form
$\alpha_f\in\Om^1(\Sigma_f,\mathfrak{su}(m))$
of $[A_f]$ equal to $\xi_1 d\theta_1+\xi_2 d\theta_2$ on the
boundary of $(\Sigma/D)_f$ seen as a torus with coordinates
$\theta_1,\,\theta_2\in S^1$, with $\xi_1,\,\xi_1\in\mathfrak{su}(m)$ constant.

On the other hand, the path $\gamma:[0,1]\fl\cT_\Sigma$
in Teichmüller space
determines a complex
structure on the relative tangent bundle of the canonical fibration
$\pi_f:\Sigma_f\fl\R/\Z$ up to isotopy, and we endow the fibres
with the compatible hyperbolic Riemannian metric.
For any $[A_f]\in\MM_f$, write $[\ad A_f]$ for the induced connection
on the trivial adjoint $\mathfrak{su}(m)$-bundle over the whole
$\Sigma_f$.
\begin{theorem}\label{WRTintro}
Let $\MM_f=\coprod_{j=1}^q\MM_{f,j}$
be the decomposition into connected components. 
Then there exist densities
$\nu_r$ over $\MM_f$ for all $r\in\N$,
such that for any $k\in\N$, the anomalous Witten-Reshetikhin-Turaev
invariant satisfies as $p\fl +\infty$,
\begin{equation}\label{WAC}
\Tr_{\HH_p}[\varphi^*_p\Tau_p]=\frac{1}{m}\sum_{j=1}^q
p^{\frac{\dim\MM_{f,j}}{2}}
e^{2\sqrt{-1}\pi p \CS_j}(\sqrt{-1})^{k_j}
\left(\sum_{r=0}^{k-1} p^{-r} \int_{\MM_{f,j}}\nu_r
+O(p^{-k})\right),
\end{equation}
where $k_j\in\Z/4\Z$ and where $\CS_j\in\R/\Z$ is the constant
value of the Chern-Simons
invariant \cref{corCS} over $\MM_{f,j}$.
Furthermore, there is an explicit
local formula for $\nu_0$.

If $r:\MM_f\fl\MM_\Sigma$ is holomorphic and
if $\varphi$ preserves a complex subbundle
transverse to $T\MM^\varphi\subset T\MM$ over $\MM^\varphi$,
we get
\begin{equation}\label{unWRTexp}
\nu_{0}([A_f])=
\exp\left(\frac{\sqrt{-1}\pi}{4}\eta^0(\ad A_f)\right)
\big|\tau_{\Sigma_f}(\ad A_f)\big|^{1/2},
\end{equation}
for any $[A_f]\in\MM_f$, where $\eta^0(\ad A_f)$ is the adiabatic limit
of the $\eta$-invariant of the odd signature operator of
$\ad A_f$ restricted to odd forms over the Riemannian fibration
$\pi_f:\Sigma_f\fl\R/\Z$, and
$|\tau_{\Sigma_f}(\ad A_f)|^{1/2}$ is the square root
of the absolute value of the Reidemeister torsion of $\ad A_f$,
seen as a density over $\MM_f$ via Poincaré duality.
\end{theorem}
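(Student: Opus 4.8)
The plan is to obtain \cref{WAC} and \cref{unWRTexp} by specializing the general trace formula \cref{indeq} to the moduli space $\MM$ equipped with the Verlinde bundle $\HH_p$ and the Hitchin connection $\nabla^p$, and then translating the universal geometric data it produces on the fixed point set $\MM^\varphi\subset\MM$ into the invariants of flat connections on the mapping torus $\Sigma_f$. By \cref{hitsec}, $\nabla^p$ is a Toeplitz connection; by assumption $\MM^\varphi$ is a smooth submanifold with $T\MM^\varphi=\Ker(\Id_{T\MM}-d\varphi)$, and $\varphi$ is a symplectomorphism of $(\MM,\om)$ with lift $\til\varphi^L$ to $(L,h^L,\nabla^L)$. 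Hence \cref{indeq}, applied with trivial auxiliary bundle, yields for each connected component $\MM^\varphi_{j'}$ of $\MM^\varphi$ a contribution
\[
p^{\dim\MM^\varphi_{j'}/2}\,\lambda_{j'}^{\,p}\Big(\sum_{r=0}^{k-1}p^{-r}\int_{\MM^\varphi_{j'}}\nu_r+O(p^{-k})\Big),
\]
where $\lambda_{j'}$ is the constant value of $\til\varphi^{L,-1}$ on $\MM^\varphi_{j'}$ and $\nu_0$ is given by \cref{coeffgeom}, the special case \cref{nu0intro} being available under the extra hypothesis on $N$.

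Next I would push this expansion down along the restriction covering $r\colon\MM_f\fl\MM^\varphi$. This covering has degree $m$: a $\varphi$-fixed flat connection on $\Sigma\setminus D$ extends to the mapping torus up to the centralizer of its holonomy, which for the twisted boundary value $e^{2\sqrt{-1}\pi d/m}$ is exactly the centre $\Z/m\Z$ of $\SU(m)$ (this is also what makes $\MM$ smooth). Since each density pulls back and $r^*\nu_r$ integrates over $\MM_{f,j}$ to the local degree times $\int_{\MM^\varphi_{r(\MM_{f,j})}}\nu_r$, summing over components produces the global prefactor $\tfrac1m$ and the sum over the $\MM_{f,j}$, with $d_j=\dim\MM_{f,j}$. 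The discrete phase in \cref{nu0intro} is locally constant, giving the factor $(\sqrt{-1})^{k_j}$ with $k_j\in\Z/4\Z$ the index dictated by \cref{coeffgeom} (a Maslov-type index of the normal action of $d\varphi$). Finally, $(L,h^L,\nabla^L)$ is the Chern--Simons line bundle of the bordered surface, so the holonomy of $\nabla^L$ along the Teichm\"uller loop determined by $\gamma$, composed with $\til\varphi^L$, is computed at $[A]\in\MM^\varphi$ by the Chern--Simons action of the corresponding flat connection $[A_f]\in\MM_f$ on $\Sigma_f$; the boundary term $\Tr[\xi_1\xi_2]$ in \cref{corCS} is precisely what renders this action well defined in $\R/\Z$ for the torus boundary of $(\Sigma\setminus D)_f$. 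Thus $\lambda_{j'}$ pulls back to $e^{2\sqrt{-1}\pi\til\CS_j}$ on $\MM_{f,j}$, and \cref{WAC} follows, with $\nu_0$ inherited from \cref{coeffgeom}. (Here I would cite the standard Chern--Simons-line holonomy computation, adapted to the bordered surface.)

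For the closed form \cref{unWRTexp}, the hypothesis that $\varphi$ preserves a complex subbundle $N$ of $T\MM_\sigma|_{\MM^\varphi}$ transverse to $T\MM^\varphi$ lets me use \cref{nu0intro}:
\[
r^*\nu_0=(\sqrt{-1})^{k_j}\,(\til\varphi^{K_\MM}\til\tau^{K_\MM,-1})^{1/2}\,\det{}_N(\Id_N-d\varphi|_N)^{-1/2}\,|dv|_{T\MM/N}
\]
on $\MM_f$. Two identifications remain. First, under the deformation isomorphism $T\MM_\sigma|_{[A]}\simeq H^1(\Sigma_\sigma,\ad A)$, on which $d\varphi$ acts by $f^*$, one has $T\MM^\varphi\simeq H^1(\Sigma_f,\ad A_f)$ while $N$ carries no $f^*$-fixed vector; the Wang exact sequence of $\pi_f\colon\Sigma_f\fl\R/\Z$ together with Milnor's mapping-torus torsion formula identify $\det_N(\Id_N-d\varphi|_N)$ with $\tau_{\Sigma_f}(\ad A_f)$, which Poincar\'e duality on the closed oriented $3$-manifold $\Sigma_f$ turns into a density on $\MM_f$; combined with the Riemannian density this gives $\det{}_N(\Id_N-d\varphi|_N)^{-1/2}|dv|_{T\MM/N}=|\tau_{\Sigma_f}(\ad A_f)|^{1/2}$, the residual argument merging into the phase below and into $k_j$. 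Second, $K_\MM=\det(T^{(0,1)}\MM)$ is, up to conjugation, the determinant line of the family of fibrewise $\dbar$-operators on the surfaces $\Sigma_\sigma$ coupled to $\ad A$; by the Bismut--Freed holonomy theorem its Quillen-connection holonomy around the mapping-torus loop is given by the reduced $\eta$-invariant $\bar\eta$ of the induced family of Dirac operators over the total space $\Sigma_f$, and passing to the adiabatic limit where $\R/\Z$ is stretched (Bismut--Cheeger, Dai) rewrites $\bar\eta$ through the odd signature operator, yielding $(\til\varphi^{K_\MM}\til\tau^{K_\MM,-1})^{1/2}=\exp\!\big(\tfrac{\sqrt{-1}\pi}{4}\eta^0(\ad A_f)\big)$ after combining constants. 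Substituting both identities yields \cref{unWRTexp}.

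The decisive obstacle is this last identification: matching the abstract monodromy $(\til\varphi^{K_\MM}\til\tau^{K_\MM,-1})^{1/2}$ of the canonical-type bundle over $\MM$ with the adiabatic-limit $\eta$-invariant of the odd signature operator on $\Sigma_f$. This requires reconciling the connection/metric data on $K_\MM$ underlying the Toeplitz coefficient $g_0$ with the Quillen connection on the determinant line (the discrepancy being controlled by fibrewise Ray--Singer, hence Reidemeister, torsion and thus feeding the $|\tau_{\Sigma_f}(\ad A_f)|^{1/2}$ factor), the application of the Bismut--Freed anomaly and holonomy formulas to the surface family over the mapping-torus loop, and careful control of the adiabatic limit together with a precise accounting of discrete phases so as to isolate $\exp(\tfrac{\sqrt{-1}\pi}{4}\eta^0)$ from the integer power $(\sqrt{-1})^{k_j}$. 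Secondary but non-routine points are the Chern--Simons-line holonomy computation in the bordered case, and the determination of the covering degree $m$ and of the connected components of $\MM_f$.
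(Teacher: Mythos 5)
Your overall route is the paper's route: specialize \cref{indeq} to $\MM$ with the Verlinde bundle and Hitchin connection (which is Toeplitz by \cref{hitchintoep}), identify the prefactor $\lambda_j$ of $\til\varphi^{L,-1}$ with the corrected Chern--Simons invariant (the paper cites \cite[Prop.8.4]{Cha16}), push the densities down along the $m$-covering $r\colon\MM_f\to\MM^\varphi$, identify $\det{}_N(\Id_N-d\varphi|_N)^{-1/2}|dv|_{T\MM/N}$ with $|\tau_{\Sigma_f}(\ad A_f)|^{1/2}$ via the Wang exact sequence and \cite[Cor.4.3,\,Th.5.6]{AH12}, and recognize the monodromy in $K_\MM$ as a Bismut--Freed holonomy giving $\exp(\frac{\sqrt{-1}\pi}{4}\eta^0)$.

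However, your ``decisive obstacle'' paragraph contains a genuine confusion that would derail the argument if pursued as written. You assert that the discrepancy between the $K_\MM$-Chern connection (underlying the Toeplitz coefficient $g_0$) and the Quillen connection ``is controlled by fibrewise Ray--Singer, hence Reidemeister, torsion and thus feeding the $|\tau_{\Sigma_f}(\ad A_f)|^{1/2}$ factor.'' This double-counts and misattributes the torsion: the discrepancy between those two connections is $\partial\rho$ where $\rho$ is the fibrewise analytic torsion of the surface $\dbar$-operator (\cref{nabQ}), and in the paper's bookkeeping this is exactly what the parallel transport $\til\tau^{E,\sigma}$ (from $\sigma_0=\tfrac12\partial\rho$ in \cref{hitchintoep}) absorbs, converting $\til\varphi^{K_\MM}\til\tau^{K_\MM,-1}(\til\tau^\sigma)^2$ into the Quillen monodromy $\til\varphi^{K_\MM}\til\tau^{Q,-1}$ as in \cref{tauQ}, which then enters only the $\eta$-phase via \cref{QBF} and Bismut--Freed. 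The Reidemeister torsion $|\tau_{\Sigma_f}(\ad A_f)|^{1/2}$ comes \emph{entirely} from $\det{}_N(\Id_N-d\varphi|_N)$ and the Wang/Milnor identification (\cref{torsReid}), as you yourself correctly stated in the preceding paragraph. These are two independent ingredients, and conflating them would leave you without a mechanism to produce the Quillen metric correction needed for the holonomy theorem, while simultaneously spoiling your (correct) derivation of the Reidemeister torsion density. You should delete the clause ``and thus feeding the $|\tau_{\Sigma_f}(\ad A_f)|^{1/2}$ factor'' and instead note that the $\partial\rho$ correction is absorbed into the Quillen connection, after which Bismut--Freed gives the $\eta$-phase and the $\tau_{\Sigma_f}$ factor arrives separately from $\det_N$.

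A secondary imprecision: you treat the smoothness and non-degeneracy of $\MM^\varphi$ (and the smoothness of $\MM_f$ and the $m$-covering property of $r$) essentially as given, whereas the paper derives them from the Wang exact sequence \cref{Wang}, the irreducibility of all $A\in\AA$, and the vanishing $H^0_A(\Sigma)=H^2_A(\Sigma)=0$; since the applicability of \cref{indeq} depends on non-degeneracy, this step cannot be elided.
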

Note that the hypothesis of \cref{unWRTexp} is automatically 
satisfied
when $\dim\MM_f=0$ or when $f$ preserves $\sigma\in\cT_\Sigma$.
The presence of the $\eta$-invariant in
the formula \cref{unWRTexp} is a consequence of the holonomy
theorem of Bismut and Freed in \cite[Th.3.16]{BF86b} as well as the study
of holomorphic determinant line bundles of Bismut, Gillet and Soulé
in \cite[\S\,1]{BGS88c}, and
depends on the chosen path in Teichmüller space.
On the other hand, as showed in \cite[Th.4.9]{Hit90} and
in \cite[\S\,4.b]{ADW91},
the canonical connection over the Verlinde bundle is projectively
flat. By the explicit computation of its curvature in
\cite[\S\,4.b]{ADW91} and in the language of modular functors of
\cite[(5.9)]{Seg04}, the
\emph{Witten-Reshetikhin-Turaev invariant} of the
mapping torus with one colored link as above
is computed by the trace $\Tr_{\til{\HH}_p}[\varphi^*_p\Tau_p]$,
where $\Tau_p$ is the flat parallel transport in
\begin{equation}\label{Hptilde}
\til{\HH}_p:=\HH_p\otimes\det(\dbar_\Sigma)
^{-\frac{(m^2-1)p}{2(p+m)}}\,,
\end{equation}
induced by the canonical projectively flat connection
on $\HH_p$ and the Chern connection associated with
the \emph{Quillen metric} on the
holomorphic \emph{determinant line bundle} $\det(\dbar_\Sigma)$
of the universal family of
$\dbar$-operators
over $\cT_\Sigma$, for which fractional powers exist
as Teichmüller space is contractible and on which
$\varphi$ lifts naturally up to choices of
compatible lifts on each fractional power.
The following result then computes its
first order as $p\fl+\infty$.
\begin{theorem}\label{WRTintro2}
The Witten-Reshetikhin-Turaev invariant
defined above
satisfies the expansion \cref{WAC}, with first coefficient as in
\cref{unWRTexp} given for any $[A_f]\in\MM_{f}$ by
\begin{equation}\label{WRTexp}
\nu_0([A_f])=
\exp\left(\frac{\sqrt{-1}\pi}{4}\rho(\ad A_f)\right)
\big|\tau_{\Sigma_f}(\ad A_f)\big|^{1/2},
\end{equation}
where $\rho(\ad A_f)$ is the topological $\rho$-invariant
of $\ad A_f$ defined in \cref{rhodef}.
\end{theorem}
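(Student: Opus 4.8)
The strategy is to reduce the statement to \cref{WRTintro} by isolating the scalar effect of the determinant line bundle. Since $\LL$ is a line bundle over the contractible space $\Tau_\Sigma$, once we have fixed the fractional power $\LL^{-\frac{(m^2-1)p}{2(p+m)}}$ together with a compatible lift of $\varphi$, the parallel transport along $\gamma$ in $\til\HH_p=\HH_p\otimes\LL^{-\frac{(m^2-1)p}{2(p+m)}}$ composed with $\til\varphi_p^*$ is the corresponding operator on $\HH_p$ tensored with multiplication by a scalar $\mu_p\in\C^*$, namely the parallel transport along $\gamma$ in $\LL^{-\frac{(m^2-1)p}{2(p+m)}}$ composed with the chosen lift, computed with the Chern connection of the Quillen metric. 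Hence
\[
\Tr_{\til\HH_p}[\til\varphi_p^*\Tau_p]=\mu_p\,\Tr_{\HH_p}[\til\varphi_p^*\Tau_p].
\]
Writing $\mu_p=\lambda_\LL^{-\frac{(m^2-1)p}{2(p+m)}}$, where $\lambda_\LL$ is the parallel transport in $\LL$ along $\gamma$ composed with the natural lift of $\varphi$ --- a unit scalar, since the Chern connection and this lift preserve the Quillen metric --- and using
\[
\frac{(m^2-1)p}{2(p+m)}=\frac{m^2-1}{2}-\frac{(m^2-1)m}{2(p+m)},
\]
one sees that $\mu_p$ admits an asymptotic expansion in powers of $p^{-1}$ with $|\mu_p|=1$ and leading term $\lambda_\LL^{-(m^2-1)/2}$. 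Multiplying the expansion \cref{WAC} for the uncorrected trace, with coefficients from \cref{WRTintro}, by $\mu_p$ therefore preserves its shape: the powers $p^{d_j/2}$, the exponentials $e^{2\sqrt{-1}\pi p\til\CS_j}$ and the phases $(\sqrt{-1})^{k_j}$ are left unchanged, $\nu_0$ gets multiplied by $\lambda_\LL^{-(m^2-1)/2}$, and each higher $\nu_r$ is shifted by explicit combinations of $\nu_0,\dots,\nu_{r-1}$.

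It remains to check that $\lambda_\LL^{-(m^2-1)/2}$ times the coefficient \cref{unWRTexp} of \cref{WRTintro} equals the right-hand side of \cref{WRTexp}. By the holonomy theorem of Bismut and Freed \cite[Th.3.16]{BF86b}, applied to the universal family of $\dbar$-operators over $\Tau_\Sigma$ and combined with the description of the Quillen metric on $\LL$ in \cite[\S\,1]{BGS88c} and the explicit form of the ADW/Hitchin connection in \cite[\S\,4.b]{ADW91}, the scalar $\lambda_\LL$ is the exponential of the adiabatic limit of an $\eta$-invariant on the mapping torus $\Sigma_f$ of a Dirac-type operator coupled to the \emph{trivial} bundle --- the same mechanism that produces the factor $\exp(\frac{\sqrt{-1}\pi}{4}\eta^0(\ad A_f))$ in \cref{unWRTexp}. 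Consequently $\lambda_\LL^{-(m^2-1)/2}$ converts $\eta^0(\ad A_f)$ into $\eta^0(\ad A_f)-(m^2-1)\,\eta^0(\mathrm{triv})$, which by the Atiyah--Patodi--Singer argument --- the metric variation of the $\eta$-invariant of the signature operator twisted by a flat bundle is proportional to the rank of the bundle --- is independent of the vertical metric, hence of the path $\gamma$, and coincides with the topological $\rho$-invariant $\rho(\ad A_f)$ of \cref{rhodef}; the modulus $\big|\tau_{\Sigma_f}(\ad A_f)\big|^{1/2}$ is untouched because $|\lambda_\LL|=1$. This yields $\nu_0([A_f])=\exp(\frac{\sqrt{-1}\pi}{4}\rho(\ad A_f))\big|\tau_{\Sigma_f}(\ad A_f)\big|^{1/2}$.

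The main obstacle is exactly this last identification: one has to pin down the normalisations relating the power of $\LL$ entering $\til\HH_p$ to the $\dbar$-family it is the determinant of, the precise form of the Bismut--Freed holonomy formula with its factors of $2\sqrt{-1}\pi$ and its signs, the passage from the $\dbar$-type Dirac operator on $\Sigma_f$ to the odd signature operator, and the adiabatic-limit and Atiyah--Patodi--Singer metric-independence statements, and also to check that the fractional powers and the compatible lifts introduce no spurious root of unity; each ingredient is available in the literature, but assembling them with the constants $\tfrac14$ and $m^2-1$ of \cref{rhodef} is where the real work lies. Alternatively, one may bypass \cref{WRTintro}: the connection on $\til\HH_p$ induced by the Hitchin connection and the Quillen--Chern connection on $\LL^{-\frac{(m^2-1)p}{2(p+m)}}$ differs from the Hitchin connection by the scalar $-\frac{(m^2-1)p}{2(p+m)}$ times the Chern form of $\LL$, which has a uniform expansion in $p^{-1}$, so it is again a Toeplitz connection; one then applies \cref{indeq} and \cref{nu0} directly and matches the first coefficient with \cref{WRTexp} by the same $\eta$-invariant computation.
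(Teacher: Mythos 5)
Your proposal is correct and follows essentially the same route as the paper: the twist by $\LL^{-\frac{(m^2-1)p}{2(p+m)}}$ contributes only a unit scalar (the paper phrases this as taking the auxiliary bundle $E=\pi^*\LL^{-\frac{(m^2-1)p}{2(p+m)}}$ and expanding the fractional power in $p$, which is your stated "alternative"), and the leading factor $\lambda_\LL^{-(m^2-1)/2}$ is identified, via the analogue of \cref{QBF} for $\LL$ and the Bismut--Freed holonomy theorem, with $\exp\big(-\tfrac{\sqrt{-1}\pi}{4}(m^2-1)\eta^0(0)\big)$ up to a power of $\sqrt{-1}$, which combined with \cref{unWRTexp} and the Atiyah--Patodi--Singer metric-independence gives \cref{rhodef} and hence \cref{WRTexp}. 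The normalization issues you flag (the factor $2$ from $\LL^2$ versus the signature determinant bundle, and the roots of unity from the fractional powers) are handled in the paper at the same level of detail you indicate, absorbed into the locally constant $(\sqrt{-1})^{k_j}$.
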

As explained for instance in \cite[p.10]{And13},
the choice of a lift of $\varphi$ to fractional
powers of $\det(\dbar_\Sigma)$ corresponds to the canonical
central extension of the mapping
class group induced by the Atiyah $2$-framing of
\cite{Ati90}, which can be used to get the corresponding correction in
\cite[(2.23)]{Wit89}. The map associating an element of this extension
to the endomorphism $\varphi^*_p\Tau_p\in\End(\til\HH_p)$
is called a
\emph{quantum representation of the mapping class group}.
Another approach followed by Charles
in \cite[\S\,7]{Cha16} is to consider the metaplectic correction,
replacing $L^p$ by $L^p\otimes K_\MM^{1/2}$ for all $p\in\N^*$.
However, the corresponding trace does not
coincide with the Witten-Reshetikhin-Turaev invariant,
and this approach
does not imply \cref{WRTintro} nor \cref{WRTintro2}, even in the
case $\dim\MM_f=0$, which is the only one handled in
\cite[Th.1.2]{Cha16}.
By \cref{hitchintoepmeta}, we recover
\cite[Th.1.2]{Cha16} as a consequence of the general version
of \cref{indeqintro}.

The Verlinde
bundle with its canonical connection is defined for much more
general moduli spaces, the one considered above being the
standard smooth model. The above approach applies as soon as
the moduli space has a natural smooth structure, and
\cref{Approxintro,indeqintro} should extend to the case
of orbifolds, so that one could get the expansion
\cref{WAC} for much more general quantum representations of the
mapping class group.
In the case when $f\in\Diff^+(\Sigma,D)$ preserves
$\sigma\in\cT_\Sigma$, so that
$\varphi:\MM\fl\MM$ preserves $J_\sigma\in\End(T\MM)$,
Andersen applies in \cite[Th.1.1]{And13}
a singular version of
the equivariant Riemann-Roch-Hirzebruch formula \cref{RRH}
to establish the asymptotic expansion for general
singular moduli spaces, and Andersen and Himpel
identify the first coefficient with \cref{WRTexp}
in \cite[Th.1.6]{AH12}.
In \cite[Th.1.2]{AP18}, Andersen and Petersen work in the case of Hitchin connections of \cref{hitsec},
but under a weaker assumption on the fixed point set, to
get the asymptotic expansion \cref{WAC}
for a larger class of mapping tori. However, they do not
compute the first coefficient.


This paper is based on the theory of Berezin-Toeplitz operators
of Ma and Marinescu in \cite{MM08b},
and can be seen as an extension of this theory to families.
More specifically, we work in the context of the
Berezin-Toeplitz quantization introduced in \cite{ILMM17}
of general prequantized symplectic manifolds,
using the spaces $\HH_p$ of
\emph{almost holomorphic sections}
of $L^p$ for all $p\in\N^*$ large enough, defined as the direct
sum of the eigenspaces associated with the small eigenvalues of
the \emph{renormalized Bochner Laplacian} \cref{deltapphi}
of Guillemin and Uribe \cite{GU88}. These spaces coincide with
the space of holomorphic sections for $p\in\N^*$ large
enough in the Kähler case, and \cref{indeq,Approx},
which are the general versions of
\cref{indeqintro,Approxintro}, are proved in this generality.
A family version of Berezin-Toeplitz quantization has first
been studied by Ma and Zhang in \cite{MZ07},
where they show that the curvature of the quantum
bundle with respect to the $L^2$-connection
is a Toeplitz operator, and compute the first
two coefficients.

The theory of Berezin-Toeplitz operators in the Kähler case was
first developed by Bordemann,
Meinreken and Schlichenmaier in \citep{BMS94} and Schlichenmaier in
\citep{Sch00}. Their approach is based on
the work of Boutet de Monvel and Sjöstrand on the Szegö kernel
in \cite{BdMS75}, and
the theory of Toeplitz structures developed by Boutet de Monvel
and Guillemin in \citep{BdMG81}.
In this context, Zelditch first introduced in \cite{Zel97}
a quantization of symplectic maps based on a unitary
version of \citep{BdMG81}, which does not depend on the choice
of a path of complex structures. This quantization is defined up
to a phase function in its symbol, while the amplitude
\cite[(3.10)]{Zel97} of its symbol coincides with the amplitude
\cref{barmut} of the local model in flat space
on the diagonal, which does not depend on a path of complex
structures. This shows via \cref{Tautologiclem} that the first
coefficient \cref{Taucoeffint} in \cref{Approxintro} also depends
on the path only up to a phase function.
The local model of \cref{locmod}
has also been considered in another form
by Kirwin and Wu in \cite{KW06}, where
they deal with paths of complex structures on flat space given by
geodesics in the Siegel upper half-space. In particular,
we recover \cite[Th.3.4]{KW06} as a special case of \cref{ptloc}.

In \cref{defsec}, we introduce the basic notions of fibrations used
in this paper, and recall the results from the theory of the Bergman kernel
of \cite{MM07} in this setting. In \cref{genTeopsec},
we study the composition of Bergman
kernels associated with different complex structures,
introduce the corresponding generalization of a Toeplitz operator
and establish the general version of
\cref{Approxintro}. In \cref{locsec},
we use this result to establish the general version of
\cref{indeqintro}.
In \cref{Appli}, we introduce the notion of a Hitchin
connection and relate it to our notion of Toeplitz connection,
then apply our results to establish
\cref{WRTintro,WRTintro2}.

\begin{ackn*}
The author wants to thank Prof. Xiaonan Ma for his constant support,
and Long Mai for a useful discussion on the last section of this paper.
The author also wants to thank the anonymous
referee for useful comments
and suggestions. 
This work was supported by the grant DIM-RDF
from Région Ile-de-France.
\end{ackn*}

\section{Prequantized fibrations and Bergman kernels}\label{defsec}

In \cref{setting}, we introduce the notion of a
prequantized fibration
and construct the associated quantum bundle over its base.
In \cref{famberg},
we recall the results of \cite{MM07} in this setting.
In \cref{trivfib}, we specialize these notions in the case of
a trivialized fibration.

\subsection{Setting}\label{setting}

Let $\pi: M\fl B$ be a submersion of smooth manifolds with compact fibre $X$
such that $\dim X=2n$, and assume $B$ connected. Let $TX$ be
the associated \emph{relative tangent bundle} over $M$, defined as the kernel
of $d\pi$ inside the tangent bundle $TM$ of $M$.

\begin{defi}\label{quantfib}
A submersion $\pi: M\fl B$ as above together with a 
Hermitian line bundle with Hermitian connection
$(L,h^{L},\nabla^{L})$ over $M$ is called a \emph{prequantized
fibration} if the restriction of the curvature
$R^{L}\in\Om^2(M,\C)$ of $\nabla^L$ to $TX\subset TM$
is non degenerate.
\end{defi}

The \emph{relative symplectic form} $\om\in\Om^2(M,\R)$ of the fibration
is defined by \cref{preq},
and induces by restriction a symplectic structure
on the fibres of $\pi$.
For any $x\in M$, let $T^H_x M$ be the subspace of $T_xM$ defined by
\begin{equation}\label{hor}
T^H_x M=\{v\in T_x M~|~\om(v,u)=0,~\forall u\in T_xX\}.
\end{equation}
Then \cref{hor} defines a subbundle $T^HM$ of $T M$, called the \emph{horizontal tangent bundle}. We have
\begin{equation}\label{split}
TM=TX\oplus T^HM,
\end{equation}
and $d\pi$ induces an isomorphism between $T^H M$ and $\pi^*TB$ over $M$.
For any
$x\in M$ and $v\in T_{\pi(x)}B$, we write $v^H_x\in T^H_xM$ for its
horizontal lift via \cref{split}, so that $d\pi.v^H_x=v$.
Let $P^{X},\,P^H$ be the projections from $TM$ to $TX,\,T^H M$, and 
for any form $\alpha\in\Om^\bullet(M,\C)$, write $\alpha^X,\,\alpha^H$
for the
restriction of $\alpha$ to $TX,\,T^H M$. Then \cref{hor} implies
$\om=\om^X+\om^H$ via \cref{split}.

Let $J\in\End(TX)$ over $M$ satisfying $J^2=-\Id_{TX}$ be \emph{compatible} with
$\om$, so that $\om$ is $J$-invariant and that
$g^{TX}:=\om(\cdot,J\cdot)$
defines an Euclidean metric on $TX$ over $M$.
%
%
%
We call $J\in\End(TX)$ a \emph{relative almost complex structure}
over $M$ compatible with $\om$, and $g^{TX}$ the associated
\emph{relative Riemannian metric}.

Let $g^{TB}$ be a metric on $TB$, which we lift to a metric on $T^HM$
via $d\pi$.
Using \cref{split}, we endow $TM$ with the unique metric
$g^{TX}\oplus g^{TB}$ restricting to $g^{TX},\,g^{TB}$ on $TX,\,TB$ and for
which $TX$ and $T^HM$ are orthogonal. Write $\nabla^{TM}$ for the associated
Levi-Civita connection on $TM$, and following Bismut
in \cite[Def.\,1.6]{Bis86}, define the
\emph{vertical Levi-Civita connection} $\nabla^{TX}$
on $TX$ by the formula
\begin{equation}\label{LCTX}
\nabla^{TX}=P^{X}\nabla^{TM}P^{X}\,.
\end{equation}
Then $\nabla^{TX}$ induces the Levi-Civita connection of $g^{TX}$ on the
fibres of $\pi$, and by \cite[Th.1.2]{BGS88}, this definition does
not depend on the choice of $g^{TB}$.

If $h^F$ is a Hermitian metric on a complex vector bundle $F$, we write
$\<\cdot,\cdot\>_F$ for the associated Hermitian product. Let
$TX_\C=TX\otimes_\R \C$ be
the complexification of $TX$. The complex structure
$J$ on $TX$ induces a splitting
\begin{equation}\label{splitc}
TX_\C=T^{(1,0)}X\oplus T^{(0,1)}X
\end{equation}
into the eigenspaces of $J$ corresponding to the eigenvalues $\sqrt{-1}$
and $-\sqrt{-1}$ respectively. We denote by $P^{(1,0)},\,P^{(0,1)}$ the
corresponding projections from $TX_\C$ to $T^{(1,0)}X,\,T^{(0,1)}X$.
Let $h^{T^{(1,0)}X},\,h^{T^{(0,1)}}X$ be the Hermitian metrics on
$T^{(1,0)}X,\,T^{(0,1)}X$ induced by $g^{TX}$
via \cref{splitc}, and define Hermitian connections on
$(T^{(1,0)}X,h^{T^{(1,0)}X}),\,(T^{(0,1)}X,h^{T^{(0,1)}X})$ by
\begin{equation}\label{nab10=Pnab}
\begin{split}
\nabla^{T^{(1,0)}X}&=P^{(1,0)}\nabla^{TX}P^{(1,0)},\\
\nabla^{T^{(0,1)}X}&=P^{(0,1)}\nabla^{TX}P^{(0,1)}.
\end{split}
\end{equation}
The \emph{relative canonical line bundle} is the
line bundle $K_X=\det(T^{(1,0)*}X)$ over $M$,
equipped with $h^{K_X},\,\nabla^{K_X}$ induced by
$h^{T^{(0,1)}X},\,\nabla^{T^{(0,1)}X}$.
Let $(E,h^E,\nabla^E)$ be a Hermitian vector bundle with
Hermitian connection over $M$. For any $p\in\N^*$, write
$L^p$ for the $p$-th tensor power of $L$, and set 
\begin{equation}\label{Ep}
E_p=L^p\otimes E.
\end{equation}
Let $h^{E_p},\nabla^{E_p}$ be induced by $h^L, h^E$ and
$\nabla^L,\nabla^E$ on $E_p$.

For any $b\in B$, set $X_b=\pi^{-1}(b)$. Let $\om_b$ and $g^{TX}_b$ be
the symplectic form and Riemannian metric induced on $X_b$ by 
restriction of $\om$ and $g^{TX}$, and let 
$dv_{X_b}$ be the volume form on $X_b$ defined by
\begin{equation}\label{Liouville}
dv_{X_b}=\frac{\om_b^n}{n!}.
\end{equation}
Then $dv_{X_b}$ is the Riemannian volume form of
$(X_b,g^{TX}_b)$ compatible with the orientation induced by $\om_b$.
Let $L_b,\,E_b,\,E_{p,b},\,T^{(1,0)}X_b$ be the vector
bundles over $X_b$ induced by restriction of
$L,\,E,\,E_p,\,T^{(1,0)}X$.
We can then
see $\cinf(M,E_p)$ as the space of smooth sections of an infinite
dimensional vector bundle with fibre $\cinf(X_b,E_{p,b})$ at $b\in B$.
For any $p\in\N^*$, we endow it with the $L^2$-Hermitian product
$\<\cdot,\cdot\>_p$, defined for any
$b\in B$ and $s_1, s_2\in\cinf(X_b,E_{p,b})$ by
\begin{equation}\label{L2}
\<s_1,s_2\>_{p,b}=\int_{X_b}\<s_1(x),s_2(x)\>_{E_{p,b}} dv_{X_b}(x).
\end{equation}
%
For any $p\in\N^*$, the \emph{Bochner Laplacian} $\Delta^{E_p}$ is the differential operator acting on $\cinf(M,E_p)$ by the formula
\begin{equation}\label{deltalpe}
\Delta^{E_p}=-\sum_{j=1}^{2n}\left[(\nabla^{E_p}_{e_j})^2-\nabla^{E_p}_{\nabla^{TX}_{e_j}e_j}\right],
\end{equation}
where $\{e_j\}_{j=1}^{2n}$ is any local orthonormal frame of $(TX,g^{TX})$.
For any $p\in\N^*$ and any Hermitian endomorphism $\Psi\in\cinf(M,\End(E))$,
the \emph{renormalized Bochner Laplacian} is the second order differential
operator $\Delta_{p,\Psi}$ acting on $\cinf(M,E_p)$ by
\begin{equation}\label{deltapphi}
\Delta_{p,\Psi}=\Delta^{E_p}-2\pi np-\Psi,
\end{equation}
where $\Psi$ denotes the operator of pointwise multiplication by $\Psi$. 
We fix the endomorphism
$\Psi\in\cinf(M,\End(E))$ and simply write $\Delta_p$ for
the associated renormalized Bochner Laplacian.

For any $b\in B$ and $p\in\N^*$, let $L^2(X_b,E_{p,b})$ be the completion
of $\cinf(X_b,E_{p,b})$ with respect to $\<\cdot,\cdot\>_{p,b}$.
Then $\Delta_p$ induces by restriction an elliptic self-adjoint
operator $\Delta_{p,b}$ on $L^2(X_b,E_{p,b})$,
which by standard elliptic theory has discrete spectrum contained in $\R$.
The following theorem comes essentially from \cite[Cor.1.2]{MM02}, and can
be deduced from the manifest uniformity in the parameters of its
proof in \cite[\S\,3]{MM02}.

\begin{theorem}\label{specdeltapphi}
For any $U\subset\subset B$, there exist $\til{C}, C>0$ such that for any
$b\in U$ and $p\in\N^*$,
\begin{equation}\label{spectil}
\Spec(\Delta_{p,b})\subset[-\til{C},\til{C}]\cup[4\pi p-C,+\infty[.
\end{equation}
Furthermore, the direct sum 
\begin{equation}
\HH_{p,b}=\bigoplus_{\substack{\lambda\in\Spec(\Delta_{p,b})\\ \lambda\in [-\til{C},\til{C}]}}\Ker(\lambda-\Delta_{p,b})
\end{equation}
is naturally included in $\cinf(X_b,E_{p,b})$, and there is $p_0\in\N$
such that for any $b\in U$ and $p\geq p_0$, we have
\begin{equation}\label{RRH}
\dim\HH_{p,b}=\int_X\Td(T^{(1,0)}X_b)\ch(E_b)\exp(p\om_b),
\end{equation}
where $\Td(T^{(1,0)}X_b)$ represents the Todd class of $T^{(1,0)}X_b$ and $\ch(E_b)$ represents the Chern character of $E_b$.
\end{theorem}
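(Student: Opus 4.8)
The plan is to reduce \cref{specdeltapphi} to the analysis of the renormalized Bochner Laplacian on a \emph{single} compact symplectic manifold carried out by Ma and Marinescu in \cite{MM02}, and to verify that every estimate there depends only on data that is uniformly controlled over the compact set $\pi^{-1}(\overline{U})$. Two facts drive the argument. First, a Lichnerowicz-type (Bochner--Kodaira) formula for $\Delta^{E_p}$ together with the prequantization identity \cref{preq}: after the rescaling $Z\mapsto Z/\sqrt{p}$ of $\nabla^{TX}_b$-normal coordinates centred at a point $x\in X_b$, the operator $\tfrac1p\Delta_{p,b}$ is, to leading order, the harmonic oscillator on $\C^n\simeq T^{(1,0)}_xX_b$ acting on $E_x$-valued functions, whose spectrum is $\{4\pi k:k\in\N\}$ with the eigenvalue $0$ of infinite multiplicity. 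Second, the fibrewise curvature contributions and the zeroth-order term $\Psi$ are $O(1)$ perturbations relative to $p$. These are precisely the inputs of \cite{MM02}.

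Granting this, the spectral gap \cref{spectil} follows. Localising the model comparison by a partition of unity on the fibre $X_b$ --- equivalently, running the spectral-gap argument of Guillemin--Uribe in the refined form of \cite[\S\,3]{MM02} --- yields a constant $C_0>0$ such that
\begin{equation}\label{gapplan}
\Spec\big(\Delta^{E_p}_b\big)\subset[2\pi np-C_0,\,2\pi np+C_0]\cup[2\pi np+4\pi p-C_0,\,+\infty[.
\end{equation}
Subtracting $2\pi np$ and the bounded self-adjoint operator of multiplication by $\Psi$ gives \cref{spectil} with $\til C=C=C_0+\sup_{\pi^{-1}(\overline U)}|\Psi|$. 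For uniformity in $b\in U$ one notes that $C_0$ in \cref{gapplan} depends only on finitely many $\nabla^{TX}$-derivatives of $g^{TX}$, $R^L$, $J$, $\Psi$ and of $(h^E,\nabla^E)$, together with a positive lower bound on the injectivity radius of $(X_b,g^{TX}_b)$; all of these are uniformly bounded over the compact set $\pi^{-1}(\overline U)$ (which is compact since $\pi$ is proper) and over the compact family of fibres $\{X_b\}_{b\in\overline U}$, so $C_0$, hence $\til C$ and $C$, may be chosen independently of $b\in U$ and of $p\in\N^*$.

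The intermediate assertions are routine. For each fixed $b$ and $p$, $\Delta_{p,b}$ is a formally self-adjoint second-order elliptic operator on the compact manifold $X_b$; hence it is essentially self-adjoint on $L^2(X_b,E_{p,b})$, has discrete real spectrum, and all its eigensections are smooth by elliptic regularity. By \cref{spectil} only finitely many eigenvalues lie in $[-\til C,\til C]$, so $\HH_{p,b}$ is a finite-dimensional subspace of $\cinf(X_b,E_{p,b})$.

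It remains to establish \cref{RRH} for $p$ large. Here I would invoke the identification, due to \cite[Cor.1.2]{MM02}, of the small-eigenvalue space $\HH_{p,b}$ with the index of the spin$^c$ Dirac operator $D_{p,b}$ associated with $(TX_b,J_b)$ and the twisting bundle $E_{p,b}$: the Dirac Laplacian $D_{p,b}^2$ enjoys an analogous gap $\Spec(D_{p,b}^2)\subset[0,C_1]\cup[C_2p,+\infty[$, its low-lying eigenspace is concentrated in bidegree $0$ once $p$ is large, and there it coincides with $\HH_{p,b}$; thus for $p>p_0$ one has $\dim\HH_{p,b}=\mathrm{ind}\,D_{p,b}$, and the Atiyah--Singer index theorem produces the right-hand side of \cref{RRH}. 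The same uniform control of the geometry makes the threshold $p_0$ independent of $b\in U$. The main obstacle is exactly this uniformity: since the analysis of \cite{MM02} is written for a fixed manifold, the proof consists in checking that all the constants it produces --- in the local model estimates, in the spectral-gap argument, and in the index identification --- involve only the smooth data on $\pi^{-1}(\overline U)$ together with finitely many of their derivatives and a lower injectivity-radius bound, which are uniform by compactness of $\overline U$ and of the fibres. This is the ``manifest uniformity in the parameters'' referred to above.
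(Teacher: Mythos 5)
Your proposal is correct and follows the same route as the paper, which simply cites \cite[Cor.1.2]{MM02} for the spectral gap and the index identification and asserts that the constants in \cite[\S\,3]{MM02} are manifestly uniform in the parameters. You spell out in more detail what that uniformity amounts to (dependence on finitely many derivatives of the geometric data and an injectivity-radius bound over the compact set $\pi^{-1}(\overline{U})$), which is consistent with, and slightly more explicit than, the paper's one-line justification.
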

The right hand side of \cref{RRH} is an integer depending
smoothly on $b\in B$, thus
locally constant in $b\in B$. In view of \cref{specdeltapphi},
we will assume $B$ compact from now on, and fix $p_0\in\N$ with
$\til{C}<4\pi p_0-C$ and such that \cref{RRH} is satisfied
for all $p\geq p_0$ and $b\in B$. As $B$ is connected, this implies
that $\dim\HH_{p,b}$ does not depend on $b\in B$.
Following \cite[\S\,9.2]{BGV04},
we can define the orthogonal projection operator $P_{p,b}$ from
$\cinf(X_b,E_{p,b})$ to $ \HH_{p,b}$ with respect to the
$L^2$-Hermitian product \cref{L2} by the following contour integral
in the complex plane,
\begin{equation}
P_{p,b}=\int_\Gamma \left(\lambda-\Delta_{p,b}\right)^{-1}d\lambda,
\end{equation}
where $\Gamma$ is a circle of center $0$ and radius $a>0$ such that
$\til{C}<a<4\pi p-C$. This shows that the projection operators
$P_{p,b}$ depend smoothly on $b\in B$, and as the dimension of
$\Im(P_{p,b})=\HH_{p,b}$ is constant in $b\in B$, this defines
a finite
dimensional bundle over $B$ with fibre $\HH_{p,b}$ at $b\in B$.
We write this bundle 
$\HH_p$, and we endow it with the Hermitian structure $h^{\HH_p}$
given by the
$L^2$-Hermitian product \cref{L2} restricted to $\HH_{p,b}$ for all $b\in B$.
By convention, we take $\HH_{p,b}=\{0\}$ for all $b\in B$,
so that $\HH_p$ is the trivial bundle over $B$, whenever $p<p_0$.
The family $\{\HH_p\}_{p\in\N^*}$ of Hermitian vector
bundles over $B$ is called the \emph{quantum bundle} of
$\pi:M\fl B$.

As $P_{p,b}$ is a projection operator on a finite dimensional
space for all
$p\in\N^*$, it has smooth Schwartz kernel with respect
to $dv_{X_b}$, depending smoothly on $b\in B$. Specifically, let
$M\times_B M$ be the fibred product of $M$ with itself over $B$, and
write $\pi_1,\,\pi_2$ for the first and second projections from
$M\times_B M$ to $M$. For any vector bundles $F_1$ and $F_2$
over $M$, we define the fibred tensor product of $F_1$ and $F_2$
over $B$ as a vector bundle $F_1\boxtimes_B F_2$ over $M\times_B M$
by the formula
\begin{equation}\label{boxdef}
F_1\boxtimes_B F_2=\pi_1^*F_1\otimes\pi_2^*F_2.
\end{equation}
Then there exists $P_p(.,.)\in\cinf(M\times_B M,E_p\boxtimes_BE_p^*)$
such that for all $b\in B,\,s\in\cinf(X_b,E_{p,b})$ and $x\in X_b$, we get
\begin{equation}\label{Bergdef}
\left(P_{p,b}s\right)(x)=\int_{X_b}P_p(x,y)s(y)dv_{X_b}(y).
\end{equation}
We call $P_p(\cdot,\cdot)$ the relative \emph{Bergman kernel} of $E_p$
over $\pi:M\fl B$. Taking $s\in\cinf(M,E_p)$ in
\cref{Bergdef}, this defines in turn a projection operator $P_p$ from
$\cinf(M,E_p)$ to $\Ker(\Delta_p)=\cinf(B,\HH_p)$, called the relative
\emph{Bergman projection} on $\HH_p$ over $\pi:M\fl B$.
On the other hand, for any $K_{1,p}(\cdot,\cdot),\,K_{2,p}(.,.)\in\cinf(M\times_B M,E_p\boxtimes_BE_p^*)$, we write $K_{1,p}K_{2,p}(.,.)\in\cinf(M\times_B M,E_p\boxtimes_BE_p^*)$ for the kernel defined for any $b\in B,\,x,y\in X_b$ by
\begin{equation}\label{compoker}
K_{1,p}K_{2,p}(x,y)=\int_X K_{1,p}(x,w)K_{2,p}(w,y)dv_{X_b}(w).
\end{equation}
%
%
%

For any $v\in\cinf(B,TB)$,
consider $\nabla^{E_p}_{v^H}$ as a first order differential
operator acting on $\cinf(M,E_p)$.
This can be seen as the contraction with $v\in\cinf(B,TB)$
of a connection on the
infinite dimensional vector bundle of fibrewise smooth sections.
The \emph{$L^2$-Hermitian connection} on $\HH_p$
over $B$ is defined for any $v\in\cinf(B,TB)$ by
\begin{equation}\label{connectionL2}
\nabla^{\HH_p}_v=P_p\nabla^{E_p}_{v^H} P_p.
\end{equation}
By an argument of \cite[Th.1.14]{BGS88}, $\nabla^{E_p}_{v^H}$ preserves
the $L^2$-Hermitian product \cref{L2}, so that $\nabla^{\HH_p}$
is a Hermitian connection on $(\HH_p,h^{\HH_p})$.

A prequantized fibration is called \emph{holomorphic}
if $\pi:M\fl B$ is a holomorphic
submersion between complex manifolds, and if $(L,h^L),\,(E,h^E)$ are
holomorphic Hermitian bundles equipped with their
\emph{Chern connections} $\nabla^L,\,\nabla^E$, which are the unique Hermitian
connections preserving the holomorphic structure.
The relative complex structure
$J\in\End(TX)$ induced by the natural holomorphic
structure in the fibres is then compatible with $\om$,
and this makes $\pi:M\fl B$ into
a \emph{Kähler fibration} in the sense of \cite[Def.1.4]{BGS88}.
Set $\Psi=-\i\Lambda_\om R^E$ in \cref{deltapphi}, where $\Lambda_\om$
denotes the contraction by $\om$.
%
%
For a proof of the following proposition, 
we refer to \cite[\S\,1.4.3]{MM07}, \cite[Th.1.1]{BV89},
\cite[Th.3.2]{BK92} and the references therein.

\begin{prop}\label{hol}
Let $\pi:M\fl B$ be a holomorphic prequantized fibration with $B$ compact.
Then there exists $p_0\in\N$ such that for any $p\geq p_0$
and $b\in B$, the space $\HH_{p,b}$
coincides with the space of holomorphic sections of $E_{p,b}$ inside
$\cinf(X_b,E_{p,b})$, and the bundle
$\HH_p$ over $B$ has a natural holomorphic structure,
for which the $L^2$-Hermitian 
connection $\nabla^{\HH_p}$ on $\HH_p$
is the Chern connection of $(\HH_p,h^{\HH_p})$.
\end{prop}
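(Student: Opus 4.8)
The plan is to prove the three assertions of \cref{hol} — the identification of $\HH_{p,b}$ with holomorphic sections, the existence of a holomorphic structure on the bundle $\HH_p$, and the fact that $\nabla^{\HH_p}$ is the associated Chern connection — in that order, reducing each to standard facts from Hodge theory on Kähler fibrations as developed in \cite{BGS88} and \cite{MM07}.

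\textbf{Step 1: Fibrewise identification.} First I would work on a single fibre $X_b$. Since $(X_b,J_b,\om_b)$ is a compact Kähler manifold and $(L_b,h^{L_b},\nabla^{L_b}),\,(E_b,h^{E_b},\nabla^{E_b})$ are holomorphic Hermitian bundles with their Chern connections, the Bochner–Kodaira–Nakano formula identifies the renormalized Bochner Laplacian $\Delta_{p,b}$ acting on $\cinf(X_b,E_{p,b})$, up to lower-order curvature terms, with twice the Dolbeault Laplacian $\Box^{E_{p,b}}=\dbar^*\dbar$ on sections (i.e.\ on $(0,0)$-forms); this is precisely the computation recalled in \cite[\S\,1.4.3]{MM07}. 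By the Kodaira vanishing theorem, for $p$ large enough the positivity of $R^{L}|_{TX}$ forces $H^q(X_b,E_{p,b})=0$ for $q>0$, so the kernel of $\Box^{E_{p,b}}$ on $(0,0)$-forms is exactly $H^0(X_b,E_{p,b})$, the holomorphic sections. A uniform-in-$b$ spectral gap (which is already built into the hypotheses of \cref{specdeltapphi}, and can be taken uniform in $b$ since $B$ is compact) then shows that the small-eigenvalue space $\HH_{p,b}$ of $\Delta_{p,b}$ coincides with $\Ker\Box^{E_{p,b}}=H^0(X_b,E_{p,b})$ for all $p>p_0$ and all $b\in B$. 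The key point here is just that the difference between $\Delta_{p,b}$ and $2\Box^{E_{p,b}}$ is a bounded (order-zero) operator, so it cannot create new small eigenvalues once the vanishing theorem kicks in.

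\textbf{Step 2: Holomorphic structure on $\HH_p$.} Next, since $\pi:M\to B$ is a holomorphic submersion and $E_p\to M$ is a holomorphic vector bundle, the sheaf-theoretic direct image $R^0\pi_*\mathcal{O}_M(E_p)$ is, by Step 1 and Grauert's coherence/base-change theorem (the relevant version for proper holomorphic submersions is in \cite[Th.3.2]{BK92}, \cite[Th.1.1]{BV89}), a holomorphic vector bundle over $B$ whose fibre at $b$ is $H^0(X_b,E_{p,b})=\HH_{p,b}$. This endows $\HH_p$ with a holomorphic structure, i.e.\ a $\dbar$-operator $\dbar^{\HH_p}$, characterized by the fact that a local section of $\HH_p$ is holomorphic iff the corresponding family of fibrewise holomorphic sections of $E_{p,b}$ is holomorphic along $B$ as well — equivalently, $\dbar^{\HH_p}$ is the restriction (via $P_p$) of the $(0,1)$-part of the connection $\nabla^{E_p}_{\bullet^H}$ to $\cinf(B,\HH_p)$, since horizontal lift of a $(0,1)$-vector on $B$ together with $\dbar$ along the fibre assembles the total $\dbar^{E_p}$ on $M$.

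\textbf{Step 3: The $L^2$-connection is the Chern connection.} Finally, the Chern connection of the Hermitian holomorphic bundle $(\HH_p,h^{\HH_p},\dbar^{\HH_p})$ is the unique Hermitian connection whose $(0,1)$-part equals $\dbar^{\HH_p}$. By \cref{connectionL2}, $\nabla^{\HH_p}_v=P_p\nabla^{E_p}_{v^H}P_p$, and by the argument of \cite[Th.1.14]{BGS88} it is Hermitian for $h^{\HH_p}$ (this is already stated in the excerpt). It remains to check that its $(0,1)$-part coincides with $\dbar^{\HH_p}$: for $v\in\cinf(B,T^{(0,1)}B)$, applying $\nabla^{E_p}_{v^H}$ to a holomorphic section of $\HH_p$ lands in the $\dbar^{E_p}$-image along the fibre direction plus the $\dbar$-direction on $B$, and after projecting by $P_p$ this is exactly the action of $\dbar^{\HH_p}$ — here one uses that $P_p$ commutes appropriately and that $\dbar^{E_p}s=0$ for fibrewise-holomorphic $s$. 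By uniqueness of the Chern connection, $\nabla^{\HH_p}$ is it.

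\textbf{Main obstacle.} The routine analytic estimates are not the difficulty; the delicate point is Step 2, namely establishing that the direct image is genuinely a \emph{holomorphic} vector bundle with the expected fibres and that its $\dbar$-operator is the one induced from $\dbar^{E_p}$ on $M$ via horizontal lift. This requires the semicontinuity/base-change machinery together with the compatibility of the horizontal distribution $T^HM$ (defined symplectically via \cref{hor}) with the holomorphic structure on $M$ — i.e.\ that the $(0,1)$-part of $\nabla^{E_p}_{v^H}$ really does compute the relative $\dbar$ in the $B$-direction. I would handle this by choosing local holomorphic coordinates on $B$ and a local holomorphic trivialization of the fibration, reducing to the product case where the statement is the classical fact that $L^2$-projection onto fibrewise holomorphic sections depends holomorphically on the parameter, and then checking that the symplectic horizontal lift differs from the obvious coordinate one by a term that is annihilated after projection.
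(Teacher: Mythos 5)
The paper does not actually give a proof of \cref{hol}; it simply points to \cite[\S\,1.4.3]{MM07}, \cite[Th.1.1]{BV89} and \cite[Th.3.2]{BK92}. Your plan is a reasonable unpacking of those references along the standard lines (Bochner--Kodaira--Nakano $\Rightarrow$ identification of $\HH_{p,b}$ with $H^0$, direct image $\Rightarrow$ holomorphic structure, horizontal lift compatibility $\Rightarrow$ Chern connection), and Steps 2 and 3 are sound in outline, including your correct identification of the delicate point about the horizontal distribution $T^HM$ being $J$-invariant (which holds because $\om$ is of type $(1,1)$).

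However, Step 1 as written has a genuine gap. You describe $\Delta_{p,b}$ as equal to $2\Box^{E_{p,b}}$ ``up to lower-order curvature terms'' and then argue that since the difference is a bounded order-zero operator it ``cannot create new small eigenvalues,'' concluding that the small-eigenvalue space of $\Delta_{p,b}$ \emph{coincides} with $\Ker\Box^{E_{p,b}}$. That inference is not valid. A bounded self-adjoint perturbation $K$ of an operator with a spectral gap moves eigenvalues by at most $\|K\|$, so you recover the \emph{dimension} of the small-eigenvalue space once the Kodaira gap has opened wide enough; but the span of low-lying eigenvectors of $\Delta_{p,b}$ would not in general equal $\Ker\Box^{E_{p,b}}$ unless $K$ actually annihilates holomorphic sections (a holomorphic $s$ need not be close to, let alone lie in, the low-lying spectral subspace of $\Delta_{p,b}$ when $\Delta_{p,b}s=Ks\neq 0$). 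What saves the day, and what the references actually supply, is that in the K\"ahler case the Bochner--Kodaira--Nakano identity on $(0,0)$-forms is \emph{exact}: for the Chern connection, $2\Box^{E_{p,b}} = \Delta^{E_{p,b}} - \sqrt{-1}\Lambda_\om R^{E_{p,b}} = \Delta^{E_{p,b}} - 2\pi np - \sqrt{-1}\Lambda_\om R^{E}$ on sections, and the paper's choice $\Psi=-\sqrt{-1}R^E$ is made precisely so that $\Delta_{p,b}=2\Box^{E_{p,b}}$ with no remainder. With the exact identity, $\HH_{p,b}$ is literally the kernel of $\Box^{E_{p,b}}$ on sections once the spectral gap opens (and that kernel is $H^0(X_b,E_{p,b})$ always, independently of Kodaira vanishing — vanishing is what gives the gap, not the identification of the kernel). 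You should replace the perturbative heuristic in Step 1 with this exact identity; with that change the remainder of the plan goes through.
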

Let us finally set the following convention : for any $k\in\N$, if
$A,\,B\in\End(\R^k)$ are symmetric matrices with $A$ positive,
we define $\det{}^{\frac{1}{2}}(A+\sqrt{-1}B)$ to be the square root of
$\det(A+\sqrt{-1}B)$ determined by analytic continuation along the
path $t\mapsto A+t\sqrt{-1}B$ for $t\in[0,1]$.

\subsection{Bergman kernels}\label{famberg}
In this section, we will use systematically the fact that all the
estimates described in \cite[Chap.4, \S\,8.3.2]{MM07} are uniform with
respect to parameters.
Let $\pi:M\fl B$ be a prequantized fibration equipped with a relative
almost complex structure $J\in\End(TX)$ compatible with $\om$. We assume $B$ compact, and
fix $p_0\in\N$ as in \cref{specdeltapphi} for $U=B$.

For any $m\in\N$ and all $p\in\N^*$, we denote by $|\cdot|_{\CC^m}$ the
$\CC^m$-norm on $E_p\boxtimes_BE_p^*$ induced by
$h^L, h^E, \nabla^L, \nabla^E$. Recall that $X_b$ denotes the
fibre of $\pi:M\fl B$ at $b\in B$, and $g^{TX}_b$ the restriction of
$g^{TX}$ to $X_b$. Let $d^{X_b}(\cdot,\cdot)$ be the Riemannian distance on
$(X_b,g^{TX}_b)$. The following result comes essentially from
\cite[(2.5)]{LMM16}, built on arguments of
\cite[\S\,8.3.3]{MM07},\,\cite[\S\,1.1]{MM08a}.

\begin{prop}\label{theta}
For any $m,k\in\N,\,\epsilon>0$ and $\theta\in\,]0,1[\,$, there is $C>0$ such that for all $p\in\N^*,\,b\in B$ and $x,x'\in X_b$ satisfying $d^{X_b}(x,x')>\epsilon p^{-\frac{\theta}{2}}$, the following estimate holds,
\begin{equation}
|P_p(x,x')|_{\CC^m}\leq Cp^{-k}.
\end{equation}
\end{prop}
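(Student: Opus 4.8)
The plan is to reduce the statement to the standard off-diagonal decay estimate for Bergman kernels on a single fibre, treating the base parameter $b\in B$ as part of the ``uniformity in parameters'' that the authors have already emphasized is available in \cite{MM07}. First I would recall that $\HH_{p,b}=\Ker(\lambda-\Delta_{p,b}\,;\,|\lambda|\le\tilde C)$ is the low-energy spectral subspace of the renormalized Bochner Laplacian $\Delta_{p,b}$ on $X_b$, and $P_p(\cdot,\cdot)$ is its Schwartz kernel. The key input is the spectral gap of \cref{specdeltapphi}: $\Spec(\Delta_{p,b})\subset[-\tilde C,\tilde C]\cup[4\pi p-C,\infty)$, uniformly in $b\in B$ since we have taken $B$ compact. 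This is exactly the hypothesis under which the standard off-diagonal estimates of \cite[\S\S 4.1--4.2, 8.3.2--8.3.3]{MM07} (building on \cite{MM08a}) hold, and those estimates are established with explicit control on all constants in terms of a finite list of geometric data (bounds on curvatures and their derivatives, injectivity radius, etc.).

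The main step is then the following: by the finite-propagation-speed argument combined with the Plancherel/functional-calculus method, one writes $P_{p,b}=\phi_p(\Delta_{p,b})$ for a suitable family of cutoff functions $\phi_p$ adapted to the gap, and compares it with $\tilde\phi_p(\Delta_{p,b})$ where $\tilde\phi_p$ is chosen with Fourier transform supported in a small interval, say of length $\epsilon p^{-\theta/2}/4$. Since $\tilde\phi_p(\Delta_{p,b})$ has Schwartz kernel supported within distance $\epsilon p^{-\theta/2}/4$ of the diagonal (finite propagation speed for the wave equation associated with $\Delta_{p,b}$), it vanishes at points $x,x'$ with $d^{X_b}(x,x')>\epsilon p^{-\theta/2}$; and the difference $P_{p,b}-\tilde\phi_p(\Delta_{p,b})$ has $\CC^m$-kernel bounded by $C_{m,k}p^{-k}$ by the rapid decay of $\phi_p-\tilde\phi_p$ on $\Spec(\Delta_{p,b})$, again using the gap. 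Rescaling in the normal coordinates by $\sqrt p$ turns the ``$p^{-\theta/2}$'' scale into a scale $p^{(1-\theta)/2}\to\infty$, so the claim is the statement that the rescaled Bergman kernel decays faster than any power of $p$ outside any ball of radius $p^{(1-\theta)/2}$; this is precisely \cite[(2.5)]{LMM16} and \cite[Prop.\,4.1.5, Th.\,4.2.1]{MM07}, to which I would simply cite, noting the parameter-uniform version in \cite[\S 8.3.2]{MM07}.

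The one point requiring genuine care, and the step I expect to be the main obstacle, is the \emph{uniformity in $b\in B$}: one must verify that every constant produced in the single-fibre estimates depends only on a uniform bound for the relevant geometric quantities of $(X_b,g^{TX}_b,L_b,E_{p,b})$ and their derivatives along the fibre, and on a uniform lower bound for the fibrewise injectivity radius and for the spectral gap $4\pi p_0-C-\tilde C>0$. Here compactness of $B$ (which we have assumed) and smoothness of all the structures on $M$ and $M\times_B M$ guarantee such uniform bounds by a standard covering-of-$B$-by-finitely-many-charts argument, and one invokes the observation made just before \cref{specdeltapphi} that the proof in \cite[\S 3]{MM02}, and likewise all the estimates in \cite[Chap.\,4, \S 8.3.2]{MM07}, are manifestly uniform in such parameters. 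Thus the conclusion $|P_p(x,x')|_{\CC^m}\le Cp^{-k}$ with $C$ independent of $p\in\N^*$ and $b\in B$ follows, completing the proof.
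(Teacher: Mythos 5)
Your proposal is correct and follows essentially the same route as the paper, which itself defers to the finite-propagation-speed/spectral-gap argument of \cite[\S\,8.3.3]{MM07}, \cite[\S\,1.1]{MM08a} and its parameter-uniform restatement \cite[(2.5)]{LMM16}, together with the observation that compactness of $B$ makes the constants uniform. The one cosmetic slip is that finite propagation speed applies to even functions of $\sqrt{\Delta_{p,b}}$ (via $\cos(t\sqrt{\Delta_{p,b}})$) rather than of $\Delta_{p,b}$ directly, and $\tilde\phi_p$ cannot equal $1$ exactly on $[-\tilde C,\tilde C]$ but only up to $O(p^{-\infty})$; both points are handled in the cited references and do not affect the soundness of the argument.
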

\cref{theta} shows that the study of the asymptotics in $p\in\N^*$
of the Bergman kernel localizes around the diagonal of the fibres.
To describe asymptotic estimates for the Bergman kernel near the diagonal,
we will need the following definition.

\begin{defi}\label{chart}
Let $\psi$ be a smooth map from the total space of $TX$ to $M$, and for any $x\in M$, write $\psi_x:T_xX\fl M$ for the restriction of $\psi$ to $T_xM$. We say that $\psi$ is a \emph{smooth family of vertical charts} if for any $x\in M$, the image of $\psi_x$ is included in $X_{\pi(x)}$, such that $\psi_x(0)=x$ and its differential $d\psi_{x,0}:T_xX\fl T_xM$ at $0$ induces the canonical injection of $T_xX$ into $T_xM$.
\end{defi}

Using partitions of unity, it is easy to see that smooth families of
vertical charts always exist. We fix one of them for the rest of the section. 
For any $x\in M$ and $\epsilon>0$, let $|\cdot|_x$ be the norm of
$(T_xX,g^{T_xX})$ and let $B^{T_xX}(0,\epsilon)$ be the open ball
in $T_xX$ of radius $\epsilon$. Choose $\epsilon_0>0$ so small that for
any $x\in M$, the map $\psi_x$ induces a diffeomorphism between
$B^{T_xX}(0,\epsilon_0)$ and a neighborhood $U_x$ of $x$ in $X_{\pi(x)}$.
Identify $L,\,E$ over $U_x$ with $L_x,\,E_x$ through parallel
transport with respect to $\nabla^{L},\,\nabla^{E}$ along radial lines
of $B^{T_xX}(0,\epsilon_0)$ in the chart induced by $\psi_x$.
Pick a unit section $e_x\in L_x$ and use it to identify $L_x$ with $\C$.
In this way, the vector bundle $E_p$ is identified with $E_x$ over $U_x$
for all $p\in\N^*$, and this identification can be made smoothly in $x\in M$.
Under the natural isomorphism $\End(L^p)\simeq\C$, our formulas do not
depend on this identification.

For any $p\in\N^*$ and any kernel
$K_p(\cdot,\cdot)\in\cinf(M\times_B M,E_p\boxtimes_BE_p^*)$,
we write
$K_{p,x}(Z,Z')\in\End(E_x)$
for its image in this trivialization evaluated at
$Z,Z'\in B^{T_xX}(0,\epsilon_0)$.
Note that $K_{p,x}$ is
a section of the pullback bundle $\pi_0^*\End(E)$ over an open set of
the fibred product $\pi_0:TX\times_M TX\fl M$ of the total space of $TX$ with
itself over $M$. On the other hand, if $\cal{K}\in\pi_0^*\End(E)$
is such a section and $F_x(Z,Z')\in\End(E_x)$
is polynomial in $Z,Z'\in T_xX$ for any $x\in M$,
we write
\begin{equation}
F\cal{K}_x(Z,Z')=F_x(Z,Z')\cal{K}_x(Z,Z').
\end{equation}
%
Recalling that two Riemannian metrics induce equivalent distances
in a continuous way with respect to parameters, we can precise
\cref{theta} in the following way.
\begin{cor}\label{thetagal}
For any $m,k\in\N,\,\epsilon>0$ and $\theta\in \,]0,1[\,$, there is $C>0$ such that for all $p\in\N^*$ and $x\in X,\,Z,Z'\in B^{T_xX}(0,\epsilon_0)$ such that $|Z-Z'|_x>\epsilon p^{-\frac{\theta}{2}}$, the following estimate holds,
\begin{equation}
|P_{p,x}(Z,Z')|_{\CC^m}\leq Cp^{-k}.
\end{equation}
\end{cor}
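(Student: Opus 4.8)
The plan is to deduce \cref{thetagal} from \cref{theta} by comparing the two distance functions that appear in the estimates: the intrinsic Riemannian distance $d^{X_b}(x,x')$ on the fibre, which is what \cref{theta} controls, and the Euclidean distance $|Z-Z'|_x$ measured inside a single tangent space $T_xX$ in the chart $\psi_x$, which is what \cref{thetagal} needs to control. First I would fix a relatively compact open set (here $B$ itself, which we have assumed compact) so that all the geometric data — the metrics $g^{TX}_b$, the charts $\psi_x$, and the radius $\epsilon_0$ — vary over a compact parameter space, giving uniform two-sided bounds. The point of \cref{chart}–\cref{defi} and the choice of $\epsilon_0$ is precisely that $\psi_x$ is a diffeomorphism from $B^{T_xX}(0,\epsilon_0)$ onto $U_x\subset X_{\pi(x)}$, so that on these balls the pushed-forward metric $\psi_x^*g^{TX}_{\pi(x)}$ is comparable to the constant Euclidean metric $g^{T_xX}$ uniformly in $x$.

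The key step is the metric comparison: there exist constants $0<c_1\le c_2$, independent of $x\in M$ and of $Z,Z'\in B^{T_xX}(0,\epsilon_0)$, such that
\begin{equation}\label{distcomp}
c_1\,|Z-Z'|_x\leq d^{X_{\pi(x)}}(\psi_x(Z),\psi_x(Z'))\leq c_2\,|Z-Z'|_x.
\end{equation}
This follows from compactness of $M$ together with smoothness of the family of charts: the Riemannian distance $d^{X_b}$ restricted to the compact neighbourhood $U_x$ is bi-Lipschitz equivalent to the chart distance with Lipschitz constants depending continuously on $x$, hence uniformly bounded. (One may have to shrink $\epsilon_0$ slightly to ensure the geodesic realizing $d^{X_b}$ between two points of $\psi_x(B^{T_xX}(0,\epsilon_0))$ stays in a controlled region; alternatively one works with $\epsilon_0$ small enough that $U_x$ is geodesically convex.) Granting \cref{distcomp}, the hypothesis $|Z-Z'|_x>\epsilon p^{-\theta/2}$ gives $d^{X_{\pi(x)}}(\psi_x(Z),\psi_x(Z'))>c_1\epsilon\, p^{-\theta/2}$, so applying \cref{theta} with $\epsilon$ replaced by $c_1\epsilon$ yields $|P_p(\psi_x(Z),\psi_x(Z'))|_{\CC^m}\le C p^{-k}$. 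Since $P_{p,x}(Z,Z')$ is by definition the expression of $P_p$ in the trivialization along $\psi_x$, and the trivialization is obtained by parallel transport along radial lines over a compact base, the $\CC^m$-norms of $P_{p,x}(\cdot,\cdot)$ and of $P_p(\cdot,\cdot)$ differ by a factor bounded uniformly in $x$; absorbing this factor into $C$ gives the claimed estimate.

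The main obstacle — really the only non-routine point — is establishing the uniformity in \cref{distcomp}, i.e. that the bi-Lipschitz constants between the chart distance and the fibrewise Riemannian distance can be chosen independently of the basepoint $x\in M$ and of the fibre. This is exactly the phenomenon flagged in the sentence preceding the corollary (``two Riemannian metrics induce equivalent distances in a continuous way with respect to parameters''): one regards $\psi$ as a smooth map on the total space of $TX$, pulls back the relative metric $g^{TX}$ to a family of metrics on the balls $B^{T_xX}(0,\epsilon_0)$ depending smoothly on $x$, and uses compactness of $M$ to get uniform constants comparing this family to the fibrewise-constant Euclidean metrics $g^{T_xX}$. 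Once that is in hand, \cref{thetagal} is immediate from \cref{theta}. I would therefore spend the bulk of the (short) argument on \cref{distcomp} and treat the rest as bookkeeping.
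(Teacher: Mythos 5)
Your proposal is correct and follows essentially the same route as the paper: the paper's "proof" consists entirely of the one-sentence remark preceding the corollary ("Recalling that two Riemannian metrics induce equivalent distances in a continuous way with respect to parameters..."), i.e., that the chart distance $|Z-Z'|_x$ and the fibrewise Riemannian distance $d^{X_{\pi(x)}}(\psi_x(Z),\psi_x(Z'))$ are uniformly comparable over the compact base, and then \cref{theta} applies directly. Your elaboration — including the observation that the $\CC^m$-norms in the radial trivialization differ from the intrinsic ones by a factor bounded uniformly over $M$ — is exactly the bookkeeping the paper leaves implicit.
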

We use the following explicit local model for the Bergman kernel
from \cite[(3.25)]{MM08b}, for any $x\in M,\,Z,Z'\in T_xX$,
\begin{equation}\label{PPreal}
\PP_x(Z,Z')=\exp\left(-\frac{\pi}{2}|Z-Z'|_x^2-\pi\sqrt{-1}\om_x^X(Z,Z')\right).
\end{equation}
Let $|\cdot|_{\CC^m(M)}$ denote the $\CC^m$-norm on $\pi_0^*\End(E)$
over $TX\times_M TX$ induced by $h^E$ and by derivation by
$\nabla^{\pi_0^*\End(E)}$ in the direction of $M$ via the Levi-Civita
connection. We can now state the following fundamental result on the 
near diagonal expansion of the Bergman kernel,
which was first established in
\cite[Th.4.18']{DLM06} for spin$^c$ Dirac
operators, hence in particular in the Kähler case. In this
form, it comes essentially from \cite[Th.2.1]{LMM16}.
\begin{prop}\label{asy}
There is a family $\{J_{r,x}(Z,Z')\in\End(E_x)\}_{r\in\N}$ of
polynomials in $Z,Z'\in T_xX$ of the same parity as $r$ and smooth in
$x\in M$, such that for any $\epsilon>0,\,k,m,m'\in\N$ and
$\delta\in\,]0,1[\,$, there is $\theta\in\,]0,1[$ and $C>0$
such that for all
$x\in M,\,p\in\N^*,
\,|Z|,|Z'|<\epsilon_0 p^{-\frac{\theta}{2}}$, 
\begin{multline}\label{asyfla}
\sup_{|\alpha|+|\alpha'|\leq m}\Big|\Dk{\alpha}{Z}\Dk{\alpha'}{Z'}\big(p^{-n}P_{p,x}(Z,Z')\\
-\sum_{r=0}^{k-1} J_r\PP_x(\sqrt{p}Z,\sqrt{p}Z')\left.p^{-\frac{r}{2}}\big)\right|_{\CC^{m'}(M)}\leq Cp^{-\frac{k-m}{2}+\delta}.
\end{multline}
Furthermore, for all $x\in M,~Z,\,Z'\in T_xX$, we have $J_{0,x}(Z,Z')=\Id_{E_x}$.


\end{prop}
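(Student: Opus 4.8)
The plan is to apply the analytic localization method of Bismut--Lebeau, in the form developed by Dai--Liu--Ma and Ma--Marinescu for Bergman kernels, with all estimates made uniform in the parameters $x\in M$ and $b\in B$ exactly as in \cite[Th.2.1]{LMM16}; what follows sketches the mechanism behind that reference. Fix $b\in B$ and $x_0\in X_b$, and use the vertical chart $\psi_{x_0}$ together with the trivializations of $L$ and $E$ by radial parallel transport to transfer $\Delta_p$ to a second order operator on $\cinf(B^{T_{x_0}X}(0,\epsilon_0),E_{x_0})$. By \cref{thetagal}, $P_{p,x_0}(Z,Z')$ is $O(p^{-\infty})$ once $|Z-Z'|_{x_0}>\epsilon p^{-\theta/2}$, so after the rescaling $Z\mapsto Z/\sqrt p$ --- writing $t:=p^{-1/2}$ --- the problem reduces to a fixed-size neighbourhood of $0$ in $T_{x_0}X\simeq\R^{2n}$. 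Let $\mathcal L_t$ be the rescaled renormalized Bochner Laplacian, conjugated by $\kappa_{x_0}^{1/2}$ so that it is self-adjoint for $dv_{T_{x_0}X}$; as $t\to0$ it converges to a harmonic-oscillator-type model operator $\mathcal L_0$ on $\R^{2n}$, whose spectral projector onto the bottom of its spectrum has Schwartz kernel $\PP_{x_0}(Z,Z')\,\Id_{E_{x_0}}$ with respect to $dv_{T_{x_0}X}$.

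Next I would Taylor expand $\mathcal L_t$ in powers of $t$: expanding in normal coordinates the relative metric $g^{TX}$, the connection forms and curvatures of $\nabla^L,\nabla^E,\nabla^{TX}$, and the density $\kappa_{x_0}$, one gets $\mathcal L_t=\mathcal L_0+\sum_{i=1}^{m}t^i\,\mathcal O_i+O(t^{m+1})$ in the appropriate operator sense, where each $\mathcal O_i$ is a differential operator of order at most two with coefficients polynomial in $Z$, odd or even in $Z$ according to the parity of $i$. The analytic heart is a uniform spectral gap together with weighted Sobolev resolvent estimates: for a fixed contour $\Gamma$ enclosing the bottom eigenvalue of $\mathcal L_0$ and no other point of its spectrum, one shows $(\lambda-\mathcal L_t)^{-1}$ is bounded, uniformly for $\lambda\in\Gamma$ and $t$ small, in the relevant weighted Sobolev norms on $\R^{2n}$ --- this rests on the ellipticity and a Gårding-type coercivity estimate for $\mathcal L_0$ and on the control of the $t$-perturbation on compact sets furnished by the localization above. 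Expanding $(\lambda-\mathcal L_t)^{-1}$ as a formal power series in $t$ around $(\lambda-\mathcal L_0)^{-1}$ via the resolvent identity, fed by the Taylor expansion of $\mathcal L_t$, and integrating over $\Gamma$ as in \cref{specdeltapphi}, yields an expansion $P_t=\sum_{r=0}^{k-1}t^r\mathcal F_r+O(t^k)$ of the rescaled Bergman projector, with $\mathcal F_0=\PP_{x_0}\Id_{E_{x_0}}$ and each $\mathcal F_r$ of the form $J_{r,x_0}(Z,Z')\,\PP_{x_0}(Z,Z')$; the polynomial $J_{r,x_0}$ has the same parity as $r$ since $\mathcal L_0$ and $\PP_{x_0}$ are even while $\mathcal O_i$ carries parity $i$. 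Undoing the rescaling $Z\mapsto\sqrt p\,Z$ gives \cref{asyfla}, with $J_{0,x_0}=\Id_{E_{x_0}}$.

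Smoothness and uniformity in $x_0\in M$ and $b\in B$ then follow because $M$ is compact over the compact base $B$: the Taylor coefficients, the contour $\Gamma$, the spectral gap of $\mathcal L_0$ and the resolvent bounds can all be chosen locally uniformly, and the $\CC^{m'}(M)$-estimates are obtained by differentiating the rescaled equation in the $M$-directions via $\nabla^{\pi_0^*\End(E)}$ and rerunning the same resolvent argument on the derivatives. The passage from these operator estimates to the pointwise $\CC^m$-bounds in $(Z,Z')$ with remainder $O(p^{-\frac{k-m}{2}+\delta})$ is by Sobolev embedding on a ball of fixed size, combined once more with \cref{thetagal} to discard the region where $|Z|$ or $|Z'|$ exceeds $\epsilon_0 p^{-\theta/2}$.

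The hard part will be the uniform weighted Sobolev resolvent estimates: showing that the spectral gap of $\mathcal L_0$ persists quantitatively and uniformly under the $t$-dependent perturbation, and controlling the remainder operators precisely enough that the formal series becomes a genuine asymptotic expansion with the stated error exponent. Everything else --- the Taylor expansions, the parity bookkeeping, the unrescaling, the Sobolev embedding --- is routine once those estimates are at hand; indeed, in the present setting it suffices to invoke \cite[Th.2.1]{LMM16} and to observe that its proof is manifestly uniform in $x\in M$ and $b\in B$.
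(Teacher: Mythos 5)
Your proposal matches the paper's treatment: the paper does not reprove this result but cites \cite[Th.4.18']{DLM06} and \cite[Th.2.1]{LMM16}, relying on the observation (made at the start of the section) that the estimates in \cite[Chap.4, \S\,8.3.2]{MM07} are manifestly uniform in parameters, and your sketch of the Bismut--Lebeau rescaling, model-operator, and resolvent-expansion argument is an accurate description of the mechanism behind those references. Your concluding remark that it suffices to invoke \cite[Th.2.1]{LMM16} and note the uniformity in $x\in M$, $b\in B$ is precisely what the paper does.
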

%
%
We will often consider expansions of this type in the sequel.
To this end, we introduce the following notation.

\begin{nota}\label{notcong}
For any family $\{K_p(.,.)\in\cinf(M\times_B M,E_p\boxtimes_BE_p^*)\}_{p\in\N^*}$, we write
\begin{equation}\label{cong}
p^{-n}K_{p,x}(Z,Z')\cong\sum\limits_{r=0}^{\infty}\Q_r\cK_x(\sqrt{p}Z,\sqrt{p}Z')p^{-\frac{r}{2}} + \cO(p^{-\infty}),
\end{equation}
for $\cal{K}_x(Z,Z')\in\End(E_x)$ smooth in $x\in M,\,Z,Z'\in T_xX$, and
$\{\Q_{r,x}(Z,Z')\in\End(E_x)\}_{r\in\N}$ a family of polynomials in
$Z,Z'\in T_xX$, smooth in $x\in M$, if there exists $\epsilon>0$ such
that for any $k,m,m'\in\N$ and $\theta\in\,]0,1[\,$, there is $C>0$,
such that for all $x\in M$
and $Z,Z'\in B^{T_xX}(0,\epsilon_0)$ with
$|Z-Z'|_x>\epsilon p^{-\frac{\theta}{2}}$,
\begin{equation}\label{thetacong}
|K_{p,x}(Z,Z')|_{\CC^m}\leq Cp^{-k}.
\end{equation}
and such that for all $\delta\in\,]0,1[$, there is
$\theta\in\,]0,1[$ and $C>0$ such that 
for all $x\in M,\,|Z|,|Z'|<\epsilon p^{-\frac{\theta}{2}}$,
\begin{multline}\label{defcong}
\sup_{|\alpha|+|\alpha'|\leq m}\left|\Dk{\alpha}{Z}\Dk{\alpha'}{Z'}
\big(\right.p^{-n}K_{p,x}(Z,Z')\\
-\sum\limits_{r=0}^{k-1}\Q_r\left.\cK_x(\sqrt{p}Z,\sqrt{p}Z') p^{-\frac{r}{2}}
\big)\right|_{\CC^{m'}(M)}\leq Cp^{-\frac{k-m}{2}+\delta}.
\end{multline}
\end{nota}
%
%
For any $f\in\cinf(M,\End(E))$, the 
\emph{Berezin-Toeplitz quantization} of $f$ is the family
of endomorphisms of $\HH_p$ for all $p\in\N^*$,
defined for any $b\in B$ by the formula
\begin{equation}\label{BTdef}
P_{p,b}fP_{p,b}:\cinf(X_b,E_{p,b})\fl\cinf(X_b,E_{p,b}),
\end{equation}
where $f$ denotes the operator of pointwise multiplication by $f$. By \cref{compoker}, this operator
 admits a smooth Schwarz kernel with respect to $dv_{X_b}$.
The following asymptotic expansion in $p\in\N^*$
is a consequence of
\cref{thetagal} and \cref{asy}. It was first
established in \cite[Lem.4.6]{MM07} for spin$^c$ Dirac
operators, hence in particular in the Kähler case. In this
form, it comes essentially from \cite[Lem.3.3]{ILMM17}.

\begin{lem}\label{Toepasy}
There is a family $\{\Q_{r,x}(f)(Z,Z')\in\End(E_x)\}_{r\in\N}$ of polynomials in $Z,Z'\in T_xX$ of the same parity as $r$, smooth in $x\in M$, such that
\begin{equation}
p^{-n}(P_{p,b}fP_{p,b})(Z,Z')\cong\sum\limits_{r=0}^{\infty}\Q_r(f)\PP_x(\sqrt{p}Z,\sqrt{p}Z')p^{-\frac{r}{2}} + \cO(p^{-\infty}).
\end{equation}
Furthermore, for all $x\in M,\,Z,Z'\in T_xX$, we have $\Q_{0,x}(f)(Z,Z')= f(x)$.
\end{lem}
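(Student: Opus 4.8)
The plan is to write the Schwartz kernel of $P_{p,b}fP_{p,b}$ as the fibred composition $P_p \circ f \circ P_p$ via \cref{compoker}, and then feed in the near-diagonal expansion of the Bergman kernel from \cref{asy} together with the off-diagonal decay from \cref{thetagal}. First I would use \cref{thetagal} to localize: for $Z,Z'$ with $|Z-Z'|_x$ bounded below by $\epsilon p^{-\theta/2}$ the kernel $(P_{p,b}fP_{p,b})(Z,Z')$ is $O(p^{-\infty})$ in any $\CC^m$-norm, which establishes the condition \cref{thetacong} in \cref{notcong}; this reduces everything to the regime $|Z|,|Z'|<\epsilon p^{-\theta/2}$. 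In that regime I would insert the expansion \cref{asyfla} twice — once for $P_p(\cdot,w)$ and once for $P_p(w,\cdot)$ — and change variables $w=Z''/\sqrt p$ (so that $dv_{X_b}(w)=\kappa_x(w)\,dv_{T_xX}(w) \approx p^{-n}\,dv_{T_xX}(Z'')$ up to the Taylor expansion of $\kappa_x$), turning the fibre integral into an integral over $T_xX$ against Gaussian-type kernels $\PP_x$.

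The main analytic input is then the Gaussian reproducing property: the model kernel $\PP_x(Z,Z')=\exp\!\big(-\tfrac{\pi}{2}|Z-Z'|_x^2-\pi\sqrt{-1}\,\om_x^X(Z,Z')\big)$ of \cref{PPreal} satisfies $\PP_x\PP_x = \PP_x$ under the $L^2(T_xX,dv_{T_xX})$ convolution, and more generally $(Q\PP_x)(Q'\PP_x)$ for polynomials $Q,Q'$ is again of the form (polynomial)$\cdot\PP_x$ — this is the standard calculus of the model operators used in \cite{MM08b} and \cite{ILMM17}. Since $f$ is smooth, I would Taylor-expand $f$ around $x$, $f(w)=\sum_{|\beta|\leq N} \partial^\beta f(x)\, w^\beta/\beta! + O(|w|^{N+1})$, so that each term contributes a monomial weight $w^\beta$ (scaling as $p^{-|\beta|/2}Z''^\beta$) inside the Gaussian integral. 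Collecting powers of $p^{-1/2}$ from the three sources — the expansion of the two Bergman kernels, the Taylor expansion of $\kappa_x$ and of $f$ — and performing the resulting finite-dimensional Gaussian integrals (each of which outputs a polynomial in $Z,Z'$ times $\PP_x(\sqrt p Z,\sqrt p Z')$) defines the polynomials $\Q_{r,x}(f)$. The parity statement follows because $J_{r,x}$ has the parity of $r$, $\kappa_x$ is even, and integrating a monomial of parity $b$ against the even Gaussian $\PP_x$ kills the odd part; tracking parities through the convolution shows $\Q_{r,x}(f)$ has the parity of $r$.

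For the remainder estimate, the point is that the error terms in \cref{asyfla} are $O(p^{-(k-m)/2+\delta})$ \emph{uniformly} on $|Z|,|Z'|<\epsilon_0 p^{-\theta/2}$, and the Gaussian decay of $\PP_x$ makes the $w$-integral over the (shrinking) fibre ball converge with uniform constants; one also checks the tail of the integral where $|Z''|$ is large is absorbed into $O(p^{-\infty})$ using the Gaussian factor. The $\CC^{m'}(M)$-estimate in the base direction and the derivatives in $Z,Z'$ are handled by differentiating under the integral sign, using that all the expansions in \cref{asy} hold in $\CC^{m'}(M)$ with derivatives in $Z,Z'$. Finally, the leading coefficient: at order $r=0$ only $J_{0,x}=\Id_{E_x}$, $\kappa_x(0)=1$ and $f(x)$ survive, and the identity $\PP_x\PP_x=\PP_x$ gives $\Q_{0,x}(f)(Z,Z')=f(x)$. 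The main obstacle I anticipate is bookkeeping: organizing the triple collection of $p^{-1/2}$-powers and verifying that the error terms remain uniform after the change of variables and differentiation — but this is exactly the mechanism already systematized in \cite[Lem.3.3]{ILMM17} and \cite[Lem.4.6]{MM07}, so it can be invoked rather than redone.
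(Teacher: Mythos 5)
Your plan is correct and follows exactly the route the paper takes: the paper gives no written proof but derives the lemma from \cref{thetagal} (off-diagonal localization) and \cref{asy} (near-diagonal expansion of the Bergman kernel), deferring the bookkeeping to \cite[Lem.3.3]{ILMM17} and \cite[Lem.4.6]{MM07}, which is precisely the mechanism you describe (double insertion of the expansion, rescaling $w\mapsto Z''/\sqrt p$, Taylor expansion of $f$ and $\kappa_x$, and the Gaussian reproducing calculus of the model kernel $\PP_x$). No gaps; the parity and leading-coefficient arguments are as in the cited references.
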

%

\subsection{Trivialization of fibrations}\label{trivfib}

In \cref{pt}, we will be mainly concerned with the study of prequantized
fibrations restricted over paths parametrized by $t\in [0,1]$.
Keeping this in mind, we consider in this section the case of a
prequantized fibration $\pi:M\fl[0,1]$.

Let $\partial_t$ be the canonical vector field of $[0,1]$ and
recall that $\partial^H_t$ denotes
its horizontal lift to $T^HM$ in $TM$. Set $X:=\pi^{-1}(0)$.
Then there is a unique diffeomorphism $\tau$ between $[0,1]\times X$ and $M$ such that for any $t\in[0,1]$ and $x_0\in X$,
\begin{equation}\label{tausmall}
\tau(0,x_0)=x_0\quad\text{and}\quad\dt\tau(t,x_0)=\partial^H_{t,x_0}.
\end{equation}
The fibration coincides via $\tau$ with
the first projection
$\pi:[0,1]\times X\fl X$.
By \cref{preq} and \cref{hor}, for any vertical
vector field $v\in\cinf(M,TX)$, we have
\begin{equation}\label{RL(H,v)=0}
R^L(\partial^H_t,v)=0.
\end{equation}
This shows parallel transport with respect to $\nabla^L$ 
along horizontal paths of $[0,1]\times X$ via $\tau$ identifies
$(L,h^L,\nabla^L)$ over $[0,1]\times X$ with the pullback
of a fixed Hermitian line bundle with connection over $X$,
which we still denote by $(L,h^L,\nabla^L)$. In particular, we have
\begin{equation}\label{trivnabL}
\nabla^L_{\delt^H}=\dt\quad\text{and}\quad \left[\dt,\nabla^L\right]=0.
\end{equation}
By \cref{preq}, we deduce that $\om$ is identified via $\tau$ with
the pullback of a fixed symplectic form $\om^X\in\Om^2(X,\R)$ over $X$,
so that the restriction of $\om$ to $X_t\simeq X$ does not depend on
$t\in[0,1]$.
Then the volume form on the fibre $X$ induced by $\om$ as in \cref{Liouville}
does not depend on $t\in[0,1]$ either.
We write $J_t,\,g^{TX}_t,\,E_t$ for the restriction of $J,\,g^{TX},\,E$
to $X\simeq X_t$ over $t\in[0,1]$.
Let
$\tau^E_t:E_0\fl E_t$ be the vector bundle isomorphism induced by
parallel transport with respect to $\nabla^E$ along horizontal curves of
$[0,1]\times X$.

Let $\psi$ be a smooth family of vertical charts over
$\pi:[0,1]\times X\fl [0,1]$ such that its restriction
$\psi_t:TX\fl X$ over the fibre at
$t\in[0,1]$ does not depend on $t$. Fix $x_0\in X$, and consider the
trivialization around $x_0$ as in \cref{famberg}.
Identify $T_{x_0}X$ with $\R^{2n}$ using an orthonormal basis
$\{e_j\}_{j=1}^n$ of $(T_{x_0}X,g^{TX}_0)$ such that
\begin{equation}\label{basis}
J_0 e_{2j}=e_{2j+1}\quad\text{and}\quad J_0e_{2j+1}=-e_{2j}.
\end{equation}
Then $\om^X_{x_0}$ induces the standard symplectic form $\Om$ on $\R^{2n}$
in this identification. Write $|\cdot|_t$ for the norm on $\R^{2n}$ induced
by $g^{TX}_t$, so that in particular $|\cdot|_0$ is the standard norm of
$\R^{2n}$. Then the local model \cref{PPreal} of the Bergman kernel on
$(T_{x_0}X,J_t)$ becomes
\begin{equation}\label{PPtriv}
\PP_{t,x_0}(Z,Z')=\exp\left(-\frac{\pi}{2}|Z-Z'|_t^2-\pi\sqrt{-1}\Om(Z,Z'))\right),
\end{equation}
for all $Z,Z'\in\R^{2n}$ and $t\in[0,1]$.
%

We end this section with the following definition, which generalizes
the setting described above.

\begin{defi}\label{trivfibdef}
A prequantized fibration $\pi:M\fl B$ is said to be
\emph{tautological}
if there is a fibration map $M\simeq B\times X$ such that
the associated 
line bundle $(L,h^L,\nabla^L)$ over $M$ is the pullback of a
Hermitian
line bundle with connection over $X$ by the second projection
$\pi_2:B\times X\fl X$.
\end{defi}

If $J\in\End(TX)$ is a relative compatible almost complex structure
over a tautological fibration, we write
$J_b$ for the restriction of $J$ to $X\simeq X_b$ over $b\in B$.
Then $J$ can be seen as a family of compatible
almost complex structures on a 
fixed symplectic manifold $(X,\om^X)$, depending smoothly on $b\in B$.

\section{Toeplitz operators}\label{genTeopsec}

In \cref{locmod}, we study the local model for the
composition of Bergman
kernels associated with different complex structures
via the identifications of \cref{trivfib}.
In \cref{crit}, we study the
corresponding generalization of a Toeplitz operator.
In \cref{pt}, we use these results to show that the parallel
transport in the quantum
bundle over a path of complex structures is a such
a Toeplitz operator, proving \cref{Approxintro}.

\subsection{Local model}\label{locmod}

Let $Z:=(Z_1,\dots, Z_{2n})\in\R^{2n}$ denote the real coordinates of $\R^{2n}$. Let $\<\cdot,\cdot\>$ be the canonical scalar product on $\R^{2n}$, and write $|\cdot|$ for the associated norm. We write $J_0\in\End(\R^{2n})$ for the complex structure defined for all $1\leq j\leq n$ by
\begin{equation}
J_0\D{Z_{2j}}=-\D{Z_{2j-1}}\quad\text{and}\quad J_0\D{Z_{2j-1}}=\D{Z_{2j}}.
\end{equation}
Then $\<\cdot,\cdot\>$ is $J_0$-invariant, and
$\Om(\cdot,\cdot)=\<J_0\cdot,\cdot\>$ defines a $J_0$-invariant
antisymmetric non-degenerate form on $\R^{2n}$, called the
\emph{canonical symplectic form}.

Let now $J_t\in\End(\R^{2n})$ satisfying $J_t^2=-\Id_{\R^{2n}}$
be a smooth one-parameter family of
compatible complex structures on $\R^{2n}$,
so that $\Om$ is $J_t$-invariant and the formula
$\<\cdot,\cdot\>_t=\Om(\cdot,J_t\cdot)$
defines a scalar product on $\R^{2n}$, for all $t\in\R$. Note that
$\<\cdot,\cdot\>_t$ and $\<\cdot,\cdot\>$ are related by
\begin{equation}\label{crochett}
\<\cdot,\cdot\>_t=\left\langle(-J_0J_t)\cdot,\cdot\right\rangle.
\end{equation}
In particular $-J_0J_t\in\End(\R^{2n})$ is positive symmetric, as well as
its inverse $-J_tJ_0$. We write $|\cdot|_t$ for the norm induced on $\R^{2n}$ 
by $\<\cdot,\cdot\>_t$.

Recall the local model \cref{PPtriv} for the Bergman kernel in $\R^{2n}$
associated to $J_t$ for any $t\in\R$, which is given for any
$Z,Z'\in\R^{2n}$ by
\begin{equation}\label{PPttriv}
\begin{split}
\PP_t(Z,Z')&=\exp\left(-\frac{\pi}{2}|Z-Z'|_t^2-\pi\sqrt{-1}\Om(Z,Z'))\right)
\\
&=\exp\left(-\frac{\pi}{2}\left\langle(-J_0J_t)(Z-Z'),(Z-Z')\right\rangle
-\pi\sqrt{-1}\Om(Z,Z'))\right).
\end{split}
\end{equation}
In particular, we have $\PP_t(Z,Z')=\overline{\PP_t(Z',Z)}$.
Note that the canonical Lebesgue measure $dZ$ of $\R^{2n}$ is induced by the 
Liouville form of $\Om$, and thus corresponds to the Riemannian volume form of $\<\cdot,\cdot\>_t$ for all $t\in [0,1]$.
For any $t\in\R$, let $\HH_t\subset L^2(\R^{2n})$ be defined by
\begin{equation}
\HH_t:=\{f\in L^2(\R^{2n})~|~Z\mapsto f(Z)e^{\frac{\pi}{2}|Z|^2}
\ \text{is holomorphic for}\ J_t\}.
\end{equation}
Then as explained in \cite[\S\,1.4]{MM08a}, $\PP_t(\cdot,\cdot)$
is the Schwartz kernel with respect to $dZ$
of the orthogonal projection $\PP_t:L^2(\R^n)\fl\HH_t$, and in particular
we have $\PP_t\PP_t=\PP_t$.
The Schwartz kernel of the composition $\PP_t\PP_0$
is given for any $Z,Z'\in\R^{2n}$ by the formula
\begin{equation}\label{PtP0def}
\PP_t\PP_0(Z,Z')=\int_{\R^{2n}}\PP_t(Z,\til{Z})\PP_0(\til{Z},Z')d\til{Z}.
\end{equation}
Let $\C^{2n}=V^{(1,0)}_t\oplus V^{(0,1)}_t$
be the splitting of $\C^{2n}=\R^{2n}\otimes_\R\C$
into the eigenspaces of $J_t$ corresponding
to the eigenvalues $\sqrt{-1}$ and $-\sqrt{-1}$.
The corresponding projections $P_t^{(1,0)},\,P_t^{(0,1)}$ from $\C^{2n}$
to $V^{(1,0)}_t,\,V^{(0,1)}_t$ are given by the formulas
\begin{equation}\label{P10t}
P^{(1,0)}_t=\frac{1-\i J_t}{2}\quad\text{and}\quad P^{(0,1)}_t=\frac{1+\i J_t}{2}.
\end{equation}
For any $t\in\R$, define the symmetric positive endomorphisms
\begin{equation}\label{At0}
A_{t}^0=\left(\frac{\Id-J_0J_t}{2}\right)^{-1}\quad\text{and}\quad
A_{0}^t=\left(\frac{\Id-J_tJ_0}{2}\right)^{-1}.
\end{equation}
and set
\begin{equation}\label{Pi0t}
\Pi_t^0=A_{t}^0 P_0^{(1,0)}\quad\text{and}\quad
\Pi_0^t=A_0^t P_t^{(1,0)}.
\end{equation}
Then we have the following identities,
\begin{equation}\label{Pi0t'}
\begin{split}
\Pi_t^0 P_t^{(1,0)}=P_t^{(1,0)} \quad &\text{and}\quad
\Pi_0^t P_0^{(1,0)}=P_0^{(1,0)},\\
 P_t^{(1,0)}\Pi_t^0=\Pi_t^0 \quad &\text{and}\quad
 P_0^{(1,0)}\Pi_0^t=\Pi_0^t,
\end{split}
\end{equation}
which, together with \cref{Pi0t}, show that
$\Pi_t^0,\,\Pi_0^t\in\End(\C^{2n})$
are precisely the projection operators
onto $V_t^{(1,0)},\,V_0^{(1,0)}$ with kernel $V_0^{(0,1)},\,V_t^{(0,1)}$.
In particular, they induce a splitting
\begin{equation}\label{splitt0}
\C^{2n}=V_t^{(1,0)}\oplus V_0^{(0,1)},
\end{equation}
for all $t\in\R$.
Their complex conjugates
$\overline{\Pi_t^0},\,\overline{\Pi_0^t}\in\End(\C^{2n})$ are the
projection operators onto
$V_t^{(0,1)},\,V_0^{(0,1)}$ with kernel $V_0^{(1,0)},\,V_t^{(1,0)}$,
and induce a splitting of $\C^{2n}$ which is complex conjugate to
\cref{splitt0}.
Note that these projections all have positive symmetric real part
given by \cref{At0}.

The following result is an explicit computation of the kernel \cref{PtP0def}, which is going to be our local model in the next section.
\begin{lem}\label{PtP0}
For any $Z,Z'\in\R^{2n}$, the following formula holds
\begin{equation}\label{PtP0fla}
\PP_t\PP_0(Z,Z')=
\det(A_t^0)^{\frac{1}{2}}\exp\big(-\pi\left[\left\langle\Pi_0^t(Z-Z'),(Z-Z')\right\rangle+\i\Om(Z,Z')\right]\big).
\end{equation}
Furthermore, for any $F(Z)\in\C[Z]$ homogeneous,
there exists $\Q(F)(Z)\in\C[Z]$
of the same parity such that for any $Z,Z'\in\R^{2n}$,
\begin{equation}\label{PtP0Ffla}
\int_{\R^{2n}}\PP_t(Z,\til{Z})F(\til{Z})\PP_0(\til{Z},Z')d\til{Z}
=\Q(F)(Z)\PP_t\PP_0(Z,Z').
\end{equation}
Finally, for any $B\in\End(\C^{2n})$ and $Z,Z'\in\R^{2n}$,
the following formulas hold,
\begin{equation}\label{PtP0Bfla}
\begin{split}
\int_{\R^{2n}}\PP_t(Z,\til{Z})&\<B(Z-\til{Z}),(Z-\til{Z})\>\PP_0(\til{Z},Z')d\til{Z}\\
=&\left(\<B\overline{\Pi_t^0}(Z-Z'),\overline{\Pi_t^0}(Z-Z')\>+\frac{1}{2\pi}\Tr[A_t^0B]\right)\PP_t\PP_0(Z,Z'),\\
\int_{\R^{2n}}\PP_t(Z,\til{Z})&\<B(Z'-\til{Z}),(Z'-\til{Z})\>
\PP_0(\til{Z},Z')d\til{Z}\\
=&\left(\<B\Pi_0^t(Z-Z'),\Pi_0^t(Z-Z')\>+\frac{1}{2\pi}\Tr[A_0^tB]\right)\PP_t\PP_0(Z,Z').
\end{split}
\end{equation}
\end{lem}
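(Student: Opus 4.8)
The plan is to compute the Gaussian integral \cref{PtP0def} directly, using the explicit formulas \cref{PPttriv} for $\PP_t$ and $\PP_0$. First I would expand the exponent of the integrand $\PP_t(Z,\til Z)\PP_0(\til Z,Z')$ as a quadratic polynomial in $\til Z$: the $|\,\cdot\,|_t^2$ and $|\,\cdot\,|_0^2$ terms contribute the quadratic form $\tfrac{\pi}{2}\langle(-J_0J_t)(Z-\til Z),(Z-\til Z)\rangle+\tfrac{\pi}{2}\langle(\til Z-Z'),(\til Z-Z')\rangle$, whose Hessian in $\til Z$ is $\pi(\Id+(-J_0J_t))=2\pi (A_t^0)^{-1}$, so the Gaussian normalization will produce the factor $\det(A_t^0)^{1/2}$ after accounting for $\int_{\R^{2n}}e^{-\pi\langle (A_t^0)^{-1}u,u\rangle}\,du=\det(A_t^0)^{1/2}$ (the imaginary part $\Om(Z,\til Z)+\Om(\til Z,Z')$ is linear in $\til Z$ and enters only the completion of the square). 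Completing the square in $\til Z$, the critical point $\til Z_0$ is an affine function of $Z,Z'$, and substituting it back leaves a quadratic expression in $Z-Z'$ together with the phase $\i\Om(Z,Z')$; identifying the resulting quadratic form with $\langle\Pi_0^t(Z-Z'),(Z-Z')\rangle$ is the key algebraic verification. The natural way to do this cleanly is to pass to the complex picture: decompose $Z-Z'$ along the splitting \cref{splitt0} (or its conjugate) using the projectors $\Pi_t^0,\overline{\Pi_t^0}$, exploit that $\PP_t$ kills the $V_0^{(1,0)}$-antiholomorphic and $\PP_0$ the $V_t^{(0,1)}$-holomorphic directions, and use the identities \cref{Pi0t'} together with $\Om(\cdot,\cdot)=\langle J_0\cdot,\cdot\rangle$ to match the cross terms.

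For the second statement \cref{PtP0Ffla}, I would use the standard device of differentiation under the integral sign: writing $\PP_t(Z,\til Z)F(\til Z)\PP_0(\til Z,Z')$ and noting that multiplication by a coordinate $\til Z_i$ inside the Gaussian can be realized by applying a suitable first-order differential operator in an auxiliary parameter (or directly by $\til Z_i e^{-q(\til Z)}$ expanded around the critical point $\til Z_0(Z,Z')$), one sees inductively on $\deg F$ that $\int \PP_t F \PP_0\,d\til Z$ equals $\PP_t\PP_0(Z,Z')$ times a polynomial $\Q(F)(Z)$; the parity claim follows because $\til Z_0$ and the covariance are such that $F\mapsto\Q(F)$ sends the reflection $\til Z\mapsto -\til Z$ (together with $Z\mapsto -Z,Z'\mapsto -Z'$) to $Z\mapsto -Z$, so homogeneous $F$ of parity $\epsilon$ yields $\Q(F)$ of parity $\epsilon$. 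Concretely the monomial $\til Z\mapsto\til Z$ integrates to a first-degree polynomial (the critical point plus a derivative-of-Gaussian correction that vanishes by symmetry), and then one bootstraps.

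The third statement \cref{PtP0Bfla} is the special case $F(\til Z)=\langle B(Z-\til Z),(Z-\til Z)\rangle$ (resp. with $Z'$) of \cref{PtP0Ffla}, but with the coefficients computed explicitly; I would expand $\langle B(Z-\til Z),(Z-\til Z)\rangle$, shift $\til Z=\til Z_0+u$ where $\til Z_0$ is the critical point, and use the two moment identities for the centered Gaussian with covariance $\tfrac{1}{2\pi}A_t^0$: $\int u_a e^{-\pi\langle(A_t^0)^{-1}u,u\rangle}\,du=0$ and $\int u_au_b\,(\cdots)\,du=\tfrac{1}{2\pi}(A_t^0)_{ab}\cdot\det(A_t^0)^{1/2}$. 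The linear-in-$u$ term drops, the quadratic-in-$u$ term gives exactly $\tfrac{1}{2\pi}\Tr[A_t^0B]$ after contracting with the covariance, and the $u$-independent term $\langle B(Z-\til Z_0),(Z-\til Z_0)\rangle$ must be identified with $\langle B\overline{\Pi_t^0}(Z-Z'),\overline{\Pi_t^0}(Z-Z')\rangle$ — i.e. one shows $Z-\til Z_0=\overline{\Pi_t^0}(Z-Z')$, which is precisely the geometric content of the splitting \cref{splitt0}. The analogous identity $Z'-\til Z_0=-\Pi_0^t(Z-Z')$ gives the second line.

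The main obstacle I anticipate is the linear-algebra bookkeeping tying the real Gaussian computation to the complex projectors: verifying that the critical point satisfies $Z-\til Z_0=\overline{\Pi_t^0}(Z-Z')$ and $\til Z_0-Z'=\Pi_0^t(Z-Z')$, and that these two are consistent (their sum is $Z-Z'$, matching $\overline{\Pi_t^0}+\Pi_0^t=\Id$ on the relevant subspace via \cref{Pi0t'}), and that the cross terms in the exponent reassemble into $-\pi[\langle\Pi_0^t(Z-Z'),(Z-Z')\rangle+\i\Om(Z,Z')]$. Everything else is a routine Gaussian integral once the complex structure is used to diagonalize; I would organize the proof so that the real computation is done once in the form "Gaussian with Hessian $2\pi(A_t^0)^{-1}$" and all the structural identities are deduced from \cref{crochett}, \cref{At0}, \cref{Pi0t} and \cref{Pi0t'} rather than from coordinates.
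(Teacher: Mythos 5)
Your proposal is correct and uses the same basic mechanism as the paper: a direct Gaussian computation of the kernel $\PP_t\PP_0$, the centered-Gaussian moment formulas for \cref{PtP0Ffla} and \cref{PtP0Bfla}, and the linear algebra of the projectors from \cref{At0}--\cref{Pi0t'}. The organizational difference is real but minor. The paper first substitutes $\til{Z}\mapsto\til{Z}+Z$, applies the standard Gaussian formula to reach the intermediate expression \cref{PtP0comput3'} involving $\langle A_t^0 P_0^{(0,1)}(Z-Z'),P_0^{(0,1)}(Z-Z')\rangle$, and then massages it via $A_t^0+A_0^t=2\,\Id$ and $-J_0A_t^0J_0=A_0^t$ into \cref{PtP0fla}; you instead identify the complex critical point $\til{Z}_0 = \Pi_0^t Z + \overline{\Pi_t^0}Z'$ up front and verify $Z-\til{Z}_0 = \overline{\Pi_t^0}(Z-Z')$ and $\til{Z}_0-Z'=\Pi_0^t(Z-Z')$, which makes the appearance of the projectors structural rather than the result of algebraic rearrangement. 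Those critical-point identities are indeed correct — they follow from $\Pi_0^t + \overline{\Pi_t^0} = \Id$ together with $J_0\Pi_0^t = \sqrt{-1}\,\Pi_0^t$ — so the plan goes through, and they also make the first term $\langle B\overline{\Pi_t^0}(Z-Z'),\overline{\Pi_t^0}(Z-Z')\rangle$ in \cref{PtP0Bfla} immediate. One small divergence: for the second line of \cref{PtP0Bfla}, the paper deduces it from the first via the conjugation symmetry $\PP_t(Z,Z')=\overline{\PP_t(Z',Z)}$ and the exchange of $0$ and $t$, whereas you recompute it from $\til{Z}_0-Z'=\Pi_0^t(Z-Z')$; both are valid and of comparable length.
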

\begin{proof}
Let $Z,Z'\in\R^{2n}$ be fixed, and recall that
$\Om(\cdot,\cdot)=\<J_0\cdot,\cdot\>$. Through the change of variable
$\til{Z}\mapsto\til{Z}+Z$ and using \cref{PtP0def}, we get
\begin{multline}\label{PtP0comput1}
\PP_t\PP_0(Z,Z')=\int_{\R^{2n}}\exp\Big(-\frac{\pi}{2}\big[\<(-J_0J_t)\til{Z},\til{Z}\>+|\til{Z}+(Z-Z')|^2\\
+2\sqrt{-1}\Om(Z,\til{Z})+2\sqrt{-1}\Om(\til{Z}+Z,Z')\big]\Big)d\til{Z}\\
=\int_{\R^{2n}}\exp\Big(-\frac{\pi}{2}\big[\<(\Id-J_0J_t)\til{Z},\til{Z}\>
+4\<\til{Z},P^{(0,1)}_0(Z-Z')\>\big]\Big)d\til{Z}\\
\exp\Big(-\frac{\pi}{2}\big[|Z-Z'|^2+2\sqrt{-1}\Om(Z,Z')\big]\Big).\\
\end{multline}
By \cref{At0} and the classical formula for Gaussian
integrals, we then get
\begin{multline}\label{PtP0comput3'}
\PP_t\PP_0(Z,Z')=\det(A_t^0)^{\frac{1}{2}}\exp\Big(\pi
\<A_t^0P^{(0,1)}_0(Z-Z'),P^{(0,1)}_0(Z-Z')\>\Big)\\
\exp\Big(-\frac{\pi}{2}\big[|Z-Z'|^2+
2\sqrt{-1}\Om(Z,Z')\big]\Big).
\end{multline}
%
%
From \cref{At0}, we know that $-J_0A_t^0J_0=(-J_0J_t)A_t^0=A_0^t$,
and in particular, we get the identities
\begin{equation}\label{At0+A0t=2}
\begin{split}
A_t^0+A_0^t&=(1+(-J_0J_t))A_t^0=2\,\Id_{\R^{2n}}\,\\
A_t^0J_0&=J_0 A_0^t=A_0^tJ_t\,,\\
P_0^{(1,0)}A_t^0P_0^{(1,0)}&=\frac{1}{4}\left(A_t^0-A_0^t-\sqrt{-1}J_0A_t^0
+\sqrt{-1}A_t^0J_0\right)\,.
\end{split}
\end{equation}
Recall that $A_t^0$ and $A_0^t$ are symmetric. Using
\cref{P10t}, \cref{Pi0t} and these identities, we can rewrite
\cref{PtP0comput3'} into
\begin{equation}\label{PtP0comput3}
\begin{split}
& \PP_t\PP_0(Z,Z')=\det(A_t^0)^{\frac{1}{2}}\exp\Big(-\frac{\pi}{4}\<(A_0^t-A_t^0)(Z-Z'),(Z-Z')\>\Big)\\
&\exp\Big(-\frac{\pi}{4}\big[-2\sqrt{-1}\<A_t^0J_0(Z-Z'),Z-Z'\>\big]\Big)\\
&\exp\Big(-\frac{\pi}{4}\big[\<(A_0^t+A_t^0)(Z-Z'),Z-Z'\>+4\sqrt{-1}\Om(Z,Z')\big]\Big)\\
&=\det(A_t^0)^{\frac{1}{2}}\exp\big(-\pi\left[\left\langle\Pi_0^t(Z-Z'),(Z-Z')\right\rangle+\i\Om(Z,Z')\right]\big).
\end{split}
\end{equation}
This implies \cref{PtP0fla}. The computations leading to \cref{PtP0Ffla}
and \cref{PtP0Bfla} are analogous to \cref{PtP0comput1}-\cref{PtP0comput3},
applying the classical formula for the
integral of a Gaussian function multiplied by a polynomial.
Note that the second equality of \cref{PtP0Bfla} can be deduced from the
first using $\PP_t(Z,Z')=\overline{\PP_t(Z',Z)}$ and exchanging the roles
of $0$ and $t$.
\end{proof}

We recover from \cref{PtP0fla} with $t=0$ the identity
$\PP_0\PP_0=\PP_0$ characterizing projection operators.
For $t=0$, the formulas \cref{PtP0Ffla} and \cref{PtP0Bfla} are consequences
of \cite[\S\,2]{MM08b}.
For any $t\in\R$, set
\begin{equation}\label{tilmutdef}
\mu_t=\exp\left(\int_0^t \frac{1}{4}\Tr\left[\Pi_u^0\du(-J_0J_u)\right] du\right).
\end{equation}
We deduce from \cref{PtP0} the following local model for the parallel transport in the bundle of holomorphic sections along the path $t\mapsto J_t$
for $t\in\R$.
\begin{prop}\label{ptloc}
For any $t\in[0,1]$, the following formula holds,
\begin{equation}\label{mut}
\PP_t\left(\dt\PP_t\PP_0\right)=-\frac{1}{4}\Tr\left[\Pi_t^0\dt(-J_0J_t)\right]\PP_t\PP_0.
\end{equation}
In particular, we have
\begin{equation}\label{tilmut}
\PP_t\dt \left(\mu_t\PP_t\PP_0\right)=0.
\end{equation}
Furthermore, the following equality holds,
\begin{equation}\label{barmut}
\PP_t\PP_0(0,0)=|\mu_t|^{-2}.
\end{equation}
\end{prop}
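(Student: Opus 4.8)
The plan is to prove the three formulas \cref{mut}, \cref{tilmut}, \cref{barmut} in order, using \cref{PtP0} as the main input. For \cref{mut}, I would start from the explicit formula \cref{PtP0fla}, namely
\begin{equation*}
\PP_t\PP_0(Z,Z')=\det(A_t^0)^{\frac{1}{2}}\exp\big(-\pi[\langle\Pi_0^t(Z-Z'),(Z-Z')\rangle+\sqrt{-1}\Om(Z,Z')]\big),
\end{equation*}
and differentiate it in $t$. The derivative splits into two contributions: one from $\dt\det(A_t^0)^{1/2}=\tfrac12\Tr[(A_t^0)^{-1}\dt A_t^0]\det(A_t^0)^{1/2}$, and one from $-\pi\langle(\dt\Pi_0^t)(Z-Z'),(Z-Z')\rangle$ times $\PP_t\PP_0(Z,Z')$. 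Thus $\dt\PP_t\PP_0$ equals $\PP_t\PP_0$ multiplied by a scalar plus a term quadratic in $Z-Z'$. Now I apply $\PP_t$ on the left, i.e.\ integrate against $\PP_t(Z,\til Z)$ in the variable $\til Z=$ first slot; the scalar term simply reproduces $\PP_t\PP_0$ (since $\PP_t\PP_t=\PP_t$), while the quadratic term is handled by the first identity of \cref{PtP0Bfla} with $B=-\pi\,\dt\Pi_0^t$, producing $\langle B\overline{\Pi_t^0}(Z-Z'),\overline{\Pi_t^0}(Z-Z')\rangle+\tfrac1{2\pi}\Tr[A_t^0B]$ times $\PP_t\PP_0$. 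The claim \cref{mut} is that all the quadratic-in-$(Z-Z')$ terms cancel and the surviving scalar is exactly $-\tfrac14\Tr[\Pi_t^0\dt(-J_0J_t)]$.

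The main obstacle is this cancellation-and-identification step: I need the linear-algebra identities relating $\dt A_t^0$, $\dt\Pi_0^t$, $\overline{\Pi_t^0}$, and $\dt(-J_0J_t)$. Writing $S_t:=-J_0J_t$ (symmetric positive, with $A_t^0=2(\Id+S_t)^{-1}$), we have $\dt A_t^0=-A_t^0(\dt S_t)A_t^0/2$ ... up to the factor from the definition; and $\Pi_0^t=A_0^t P_t^{(1,0)}$ with $A_0^t=S_tA_t^0=2-A_t^0$ by \cref{At0+A0t=2}, $P_t^{(1,0)}=(1-\sqrt{-1}J_t)/2$. The key point will be that $\overline{\Pi_t^0}$ annihilates $V_0^{(1,0)}$ and $\Pi_0^t$ annihilates $V_t^{(0,1)}$, combined with the fact that $\dt\Pi_0^t$, when sandwiched appropriately, has the right projector structure so that the $\overline{\Pi_t^0}(\cdot)\overline{\Pi_t^0}$ piece and the $\dt\det$ piece cancel against a reindexed $\Tr[A_t^0\dt\Pi_0^t]$, leaving a trace over the holomorphic subspace $V_t^{(1,0)}$ which is precisely $-\tfrac14\Tr[\Pi_t^0\dt S_t]$. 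I would verify this by differentiating the relations $\Pi_0^t P_0^{(1,0)}=\Pi_0^t$ and $P_t^{(1,0)}\Pi_0^t=\Pi_0^t$ from \cref{Pi0t'} and using $\Tr[\Pi_0^tB]=\Tr[\Pi_0^t B\Pi_0^t]=\Tr$ over $V_t^{(1,0)}$, together with $\dt\Tr\log(\Id+S_t)=\Tr[A_t^0\dt S_t]/2$. This is a finite, if delicate, computation in $\End(\C^{2n})$.

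Granting \cref{mut}, formula \cref{tilmut} is immediate from the Leibniz rule: $\PP_t\dt(\mu_t\PP_t\PP_0)=(\dt\log\mu_t)\mu_t\PP_t\PP_0+\mu_t\PP_t(\dt\PP_t\PP_0)$, and by \cref{tilmutdef} we have $\dt\log\mu_t=\tfrac14\Tr[\Pi_t^0\dt(-J_0J_t)]$, which cancels the right-hand side of \cref{mut} after multiplying by $\mu_t$. (Here I use $\PP_t(\dt\PP_t)\PP_t=0$, obtained by differentiating $\PP_t\PP_t=\PP_t$ and sandwiching, so that $\PP_t\dt(\mu_t\PP_t\PP_0)=\mu_t\PP_t(\dt\PP_t\PP_0)+(\dt\log\mu_t)\,\mu_t\PP_t\PP_0$ with $\PP_t\PP_t\PP_0=\PP_t\PP_0$.) Finally, for \cref{barmut} I would evaluate \cref{PtP0fla} at $Z=Z'=0$, giving $\PP_t\PP_0(0,0)=\det(A_t^0)^{1/2}$. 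It remains to check $\mu_t\det(A_t^0)^{1/2}=\bar\mu_t^{-1}$, equivalently $|\mu_t|^2=\det(A_t^0)^{-1/2}$, equivalently $2\re\log\mu_t=-\tfrac12\log\det A_t^0=\tfrac12\log\det(\Id+S_t)-n\log 2$. Since $\re\log\mu_t=\tfrac14\int_0^t\re\Tr[\Pi_u^0\dt S_u]\,du$ and $\Pi_u^0$ restricted to the real trace picks out $\tfrac12 A_u^0$ on the relevant subspace (as $\re\Pi_u^0=\tfrac12 A_u^0$ by the remark after \cref{At0}, using $\Pi_u^0=A_u^0P_0^{(1,0)}$ and $P_0^{(1,0)}+\overline{P_0^{(1,0)}}=\Id$), one gets $\re\Tr[\Pi_u^0\dt S_u]=\tfrac12\Tr[A_u^0\dt S_u]=\dt\log\det(\Id+S_u)$, whence $2\re\log\mu_t=\tfrac12\log\det(\Id+S_t)+\text{const}$, and the constant is fixed by $\mu_0=1$, $A_0^0=\Id$. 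The only subtlety here is the choice of square root of $\det A_t^0$, which is positive real throughout (as $A_t^0$ is positive symmetric), so there is no ambiguity, and the normalization at $t=0$ closes the argument.
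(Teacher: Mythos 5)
Your handling of \cref{tilmut} and \cref{barmut}, granting \cref{mut}, is correct and essentially matches the paper: \cref{tilmut} is the Leibniz rule plus \cref{mut} and the defining ODE in \cref{tilmutdef}, and \cref{barmut} follows from $\PP_t\PP_0(0,0)=\det(A_t^0)^{1/2}$, the relation $A_t^0=\Pi_t^0+\overline{\Pi_t^0}$, and integration with $\mu_0=1$, $A_0^0=\Id$.

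There is however a genuine gap in your proof of \cref{mut}. After differentiating the closed form \cref{PtP0fla}, you obtain $\big[\dt\log\det(A_t^0)^{1/2}-\pi\langle\dt\Pi_0^t(Z-Z'),(Z-Z')\rangle\big]\,\PP_t\PP_0(Z,Z')$, and you then claim the left composition of $\PP_t$ with the quadratic piece is given by the first identity of \cref{PtP0Bfla} with $B=-\pi\dt\Pi_0^t$. That identity evaluates $\int\PP_t(Z,\tilde Z)\langle B(Z-\tilde Z),(Z-\tilde Z)\rangle\,\PP_0(\tilde Z,Z')\,d\tilde Z$, whereas you need $\int\PP_t(Z,\tilde Z)\langle B(\tilde Z-Z'),(\tilde Z-Z')\rangle\,\PP_t\PP_0(\tilde Z,Z')\,d\tilde Z$: the quadratic is in $(\tilde Z-Z')$ rather than $(Z-\tilde Z)$, and, crucially, the right kernel is $\PP_t\PP_0$ rather than $\PP_0$. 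Neither identity of \cref{PtP0Bfla} covers this integral; it would require a fresh Gaussian computation, or a decomposition $\tilde Z-Z'=(\tilde Z-Z)+(Z-Z')$ and repeated use of \cref{PtP0Ffla}--\cref{PtP0Bfla} with $0$ replaced by $t$. In addition, the ``cancellation-and-identification step'' you yourself identify as the main obstacle is exactly the content of \cref{mut}, and your proposal only sketches a plan for it. The paper's ordering is designed to make this trivial: it differentiates $\PP_t$ (not $\PP_t\PP_0$), so the quadratic sits against the kernel $\PP_t$; composing $\PP_t$ on the left (\cref{PtP0Bfla} with $0\mapsto t$, where $A_t^t=\Id$, $\overline{\Pi_t^t}=P_t^{(0,1)}$) replaces it by the $P_t^{(1,0)}$-projected quadratic, the trace term dropping since $-J_t\dt J_t$ is antisymmetric; composing with $\PP_0$ on the right then kills the residual quadratic at a stroke because $P_t^{(1,0)}\overline{\Pi_t^0}=0$, and the surviving trace becomes $\Tr[\Pi_t^0\dt(-J_0J_t)]$ via cyclicity and $P_t^{(1,0)}A_t^0=\Pi_t^0$ (itself a consequence of $P_t^{(1,0)}\overline{\Pi_t^0}=0$ and $\Pi_t^0=A_t^0P_0^{(1,0)}$). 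That ordering turns your ``delicate'' algebra into a single annihilation identity.
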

\begin{proof}
First note that for any $Z,Z'\in\R^{2n}$, we have 
\begin{equation}
\begin{split}
\dt\PP_t(Z,Z')&=-\frac{\pi}{2}\left\langle\dt (-J_0J_t)(Z-Z'),(Z-Z')\right\rangle\PP_t(Z,Z')\\
&=-\frac{\pi}{2}\left\langle\left(-J_t\dt J_t\right)(Z-Z'),(Z-Z')\right\rangle_{\mathlarger{t}}\PP_t(Z,Z').
\end{split}
\end{equation}
Differentiating the identity $J_t^2=-\Id$ with respect to $t\in\R$,
we get the formulas
$P^{(0,1)}_t\left(\dt J_t\right)=\left(\dt J_t\right) P^{(1,0)}_t$
and $\Tr[-J_t\dt J_t]=0$.
On the other hand, in \cref{P10t}-\cref{Pi0t'} and in \cref{PtP0}, we are
free to replace $0$ by $u$ for any $u\in\R$.
Setting $t=u$, we get $A_t^t=\Id_{\R^{2n}},\,
\overline{\Pi_t^t}=P^{(0,1)}_t,\,\PP_t\PP_u=\PP_t$, so that
using \cref{PtP0Bfla}, we get
\begin{equation}\label{ptloccomput1}
\begin{split}
&\int_{\R^{2n}}\PP_t(Z,\til{Z})\dt\PP_t(\til{Z},Z')d\til{Z}\\
&=-\frac{\pi}{2}\left\langle\left(-J_t\dt J_t\right)P^{(1,0)}_t(Z-Z'),P^{(1,0)}_t(Z-Z')\right\rangle_{\mathlarger{t}}\PP_t(Z,Z')\\
&-\frac{1}{4}\Tr\left[-J_t\dt J_t\right]\PP_t(Z,Z')\\
&=-\frac{\pi}{2}\left\langle\left(\dt (-J_0J_t)\right)P^{(1,0)}_t(Z-Z'),(Z-Z')\right\rangle\PP_t(Z,Z').
\end{split}
\end{equation}
This can also be deduced by the analogous computations in
\cite[\S\,2]{MM08b}.
We then deduce \cref{mut} from \cref{PtP0Bfla} and \cref{ptloccomput1},
using the fact from \cref{Pi0t'} that
$P_t^{(1,0)}\overline{\Pi_t^0}=0$ and the fact from \eqref{At0} that
$A_t^0J_0=J_tA_t^0$, so that \eqref{Pi0t} implies
$\Pi_t^0=P_t^{(0,1)}A_t^0$.
Then \cref{tilmut} is a straightforward consequence of
\cref{tilmutdef} and \cref{mut} using $\PP_t\PP_t=\PP_t$.
Finally, recall from \cref{Pi0t} that $A_0^t=\Pi_0^t+\overline{\Pi_0^t}$,
and note that
\begin{equation}\label{dtA0t}
\dt \det(A_t^0)^{\frac{1}{2}}=
-\frac{1}{4}\Tr\left[A_t^0\dt(-J_0J_t)\right]\det(A_t^0)^{\frac{1}{2}}.
\end{equation}
Then as $A_0^0=\Id$, formula \cref{barmut} follows from
\cref{PtP0fla} and \cref{tilmutdef} by integrating \cref{dtA0t}.
\end{proof}

We end this section with a study of the holomorphic properties of our local model. Let $z=(z_1,\dots, z_n)\in\C^n$ denote the complex coordinates of $\C^n\simeq\R^{2n}$, defined by $z_j=Z_{2j}+\sqrt{-1}Z_{2j+1}$ for all $1\leq j\leq n$. Then by the results of \cite[\S\,2]{MM08b}, or by explicit
computations from \cref{PPttriv}, we have the following.

\begin{lem}\label{projhol}
For any $F(Z)\in\C[Z]$, there is
$\til\Q(F)(z,\bar{z}')\in\C[z,\bar{z}']$ in the complex coordinates above
such that
\begin{equation}
\int_{\R^{2n}}\PP_0(Z,\til{Z})F(\til{Z})\PP_0(\til{Z},Z')d\til{Z}=\til\Q(F)(z,\bar{z}')\PP_0(Z,Z').
\end{equation}
\end{lem}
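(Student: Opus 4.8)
The plan is to prove \cref{projhol} by the same Gaussian-integral technique used in the proof of \cref{PtP0}, specialized to the case $t=0$ where the complex structures on both sides agree, and then to exploit the holomorphy of the local model in those coordinates. First I would recall from \cref{PPttriv} that for $J_0$ the standard structure one has the factorization
\begin{equation}
\PP_0(Z,Z')=\exp\Big(-\tfrac{\pi}{2}|Z|^2-\tfrac{\pi}{2}|Z'|^2+\pi\la z,\bar z'\ra_{\C^n}\Big),
\end{equation}
so that $\PP_0(Z,Z')=e^{-\frac{\pi}{2}|Z|^2}\,K(z,\bar z')\,e^{-\frac{\pi}{2}|Z'|^2}$ with $K(z,\bar z')=e^{\pi\la z,\bar z'\ra}$ holomorphic in $z$ and antiholomorphic in $z'$. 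The point is that after stripping the Gaussian weights the composition kernel $\PP_0\PP_0$ and its polynomial-weighted variants retain this separated holomorphic/antiholomorphic dependence.

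The key steps, in order: (i) Write $F(\til Z)$ as a polynomial in $\til z,\bar{\til z}$; by linearity it suffices to treat a monomial $F=\til z^{\alpha}\bar{\til z}^{\beta}$. (ii) Perform the change of variables $\til Z\mapsto\til Z+Z$ (or an appropriate shift) as in \cref{PtP0comput1}, so that the Gaussian in $\til Z$ becomes centered; since $\PP_0(Z,\til Z)\PP_0(\til Z,Z')$ carries, after shifting, a Gaussian of the form $e^{-\pi|\til Z|^2}$ times a linear exponential in $\til Z$ whose coefficients are $\bar z$ and $z'$ (precisely $e^{-2\pi\la\til Z, P_0^{(0,1)}(\cdot)\ra}$-type terms already appearing in \cref{PtP0comput1}). (iii) Apply the classical formula for the integral of a polynomial against a Gaussian: $\int_{\R^{2n}} Q(\til Z)e^{-\pi|\til Z|^2-2\pi\la \til Z,W\ra}d\til Z$ equals $e^{\pi|W|^2}$ times a polynomial in $W$ obtained by differentiating. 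Here the ``source'' $W$ depends linearly on $z$ (through the antiholomorphic projection of $Z$) and on $\bar z'$, with no $\bar z$ or $z'$ appearing; hence the resulting polynomial prefactor is a polynomial in $z$ and $\bar z'$ only. (iv) Collect the surviving exponential factors: by the $t=0$ case of \cref{PtP0fla} they reassemble into $\PP_0(Z,Z')$ (using $\Pi_0^0=P_0^{(1,0)}$, $\det A_0^0=1$), and the polynomial prefactor is the desired $\til\Q(F)(z,\bar z')$. Finally, extend from monomials to all $F\in\C[Z]$ by linearity and note the parity/degree bookkeeping is automatic.

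Alternatively — and this is perhaps cleaner — I would argue structurally: by \cref{PtP0Ffla} with $t=0$ there already exists $\Q(F)(Z)\in\C[Z]$ with $\int\PP_0(Z,\til Z)F(\til Z)\PP_0(\til Z,Z')d\til Z=\Q(F)(Z)\PP_0(Z,Z')$. It then remains to show $\Q(F)$ in fact depends only on $z$ and $\bar z'$. This follows because the reproducing property $\PP_0\PP_0=\PP_0$ forces the left-hand side, as a function of $Z$, to lie (after removing the $e^{-\frac{\pi}{2}|Z'|^2}$ factor and the weight $e^{-\frac{\pi}{2}|Z|^2}$) in the Bargmann-type space of entire functions of $z$; i.e. applying $\overline{\partial}_z$ to $e^{\frac{\pi}{2}|Z|^2}\int\PP_0(Z,\til Z)F(\til Z)\PP_0(\til Z,Z')d\til Z$ gives zero since $\PP_0(Z,\cdot)$ is the Bergman projection kernel onto $\HH_0$. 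Symmetrically, using $\PP_0(Z,Z')=\overline{\PP_0(Z',Z)}$, the dependence on $Z'$ is antiholomorphic, i.e. only through $\bar z'$. Hence $\Q(F)(Z)=\til\Q(F)(z,\bar z')$ for some $\til\Q(F)\in\C[z,\bar z']$.

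The main obstacle is the bookkeeping in step (iii): one must verify carefully that the linear ``source'' term in the shifted Gaussian integral involves $z$ and $\bar z'$ but genuinely not $\bar z$ or $z'$ — this is exactly the content of the identity $4\la\til Z,P_0^{(0,1)}(Z-Z')\ra$ appearing in \cref{PtP0comput1}, where $P_0^{(0,1)}$ picks out the antiholomorphic part, so that after the shift the $Z$-dependence enters holomorphically and the $Z'$-dependence antiholomorphically. Once this is pinned down, everything else is the routine Gaussian computation already carried out in the proof of \cref{PtP0}, and I would simply refer to \cite[\S\,2]{MM08b} for the explicit polynomial identities rather than reproduce them.
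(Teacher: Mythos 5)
Your proposal is correct, and the paper itself gives no proof beyond a pointer to \cite[\S\,2]{MM08b} or ``explicit computations from \cref{PPttriv}'', so there is no detailed argument in the source to compare against. Your second, structural route is the cleaner of the two and is fully rigorous: stripping the Gaussian weight $e^{-\frac{\pi}{2}|Z|^2}$ exhibits $\PP_0(Z,\cdot)$ as the reproducing kernel of the Bargmann space of $J_0$-entire functions, so the composed kernel times $e^{\frac{\pi}{2}|Z|^2}$ is $\bar\partial_z$-closed, hence the polynomial prefactor (which exists by \cref{PtP0Ffla} with $t=0$) must be holomorphic in $z$ and, by $\PP_0(Z,Z')=\overline{\PP_0(Z',Z)}$, antiholomorphic in $z'$.

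One small caveat on your direct route: in steps (ii)--(iii) you describe the linear ``source'' term after shifting $\til Z\mapsto\til Z+Z$ inconsistently (once as depending on $\bar z,z'$, once as $z,\bar z'$). Unwinding $P_0^{(0,1)}(Z-Z')$ in the coordinates \cref{dzdef}, the exponential reads $e^{-\pi\sum_j\til z_j(\bar z_j-\bar z'_j)}$, so the source actually involves $\bar z$ and $\bar z'$, not $z$ and $\bar z'$. The $\bar z$-dependence only disappears after combining the Gaussian moments with the shifted polynomial $F(\til Z+Z)$, which itself contributes $z$- and $\bar z$-terms: the $\bar z$-contributions cancel (this is exactly what your structural argument proves abstractly). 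So route 1 works but requires tracking this cancellation explicitly; route 2 gets it for free and is the one I would keep.
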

An important remark at this point is that the statement of \cref{projhol}
does not depend on the choice of real coordinates $Z,Z'\in\R^{2n}$, so that
\cref{projhol} holds replacing $J_0$ by $J_t$ in all that precedes,
for any $t\in\R$. In fact, the complex coordinates of $(\R^{2n},J_0)$
described above correspond to the basis of $V^{(1,0)}_0\subset\C^{2n}$
given by
\begin{equation}\label{dzdef}
\D{z_j}=P^{(1,0)}_0\D{Z_{2j}},\quad\text{for all}\quad 1\leq j\leq n.
\end{equation}
For any $t\in\R$, recall \cref{splitt0} and set
\begin{equation}\label{dztdef}
\D{z_{t,j}}=\Pi_t^0\D{Z_{2j}},\quad\text{for all}\quad 1\leq j\leq n.
\end{equation}
Let $z_t=(z_{t,1},\dots,z_{t,n})$ be the associated
complex coordinates of $(\R^{2n},J_t)$.
Then using an appropriate change of basis,
we have the following straightforward generalization of \cref{projhol}.
\begin{lem}\label{projholt}
For any $F(Z)\in\C[Z]$ and for any $t\in\R$,
there is
$\til\Q_t(F)(z_t,\bar{z}'_t)\in\C[z_t,\bar{z}'_t]$ in the complex coordinates
of $(\R^{2n},J_t)$ defined above such that
\begin{equation}
\int_{\R^{2n}}\PP_t(Z,\til{Z})F(\til{Z})\PP_t(\til{Z},Z')d\til{Z}=\til\Q_t(F)(z_t,\bar{z}'_t)\PP_0(Z,Z').
\end{equation}
\end{lem}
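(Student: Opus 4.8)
We plan to deduce \cref{projholt} from \cref{projhol} by a linear change of variables that straightens $J_t$ to $J_0$. Since $J_t$ is a compatible complex structure on $(\R^{2n},\Om)$ and $\mathrm{Sp}(2n,\R)$ acts transitively on the (contractible) space of such structures with stabiliser $U(n)$, one can choose $g_t\in\mathrm{Sp}(2n,\R)$, smoothly in $t$ and with $g_0=\mathrm{Id}$, such that $g_tJ_0g_t^{-1}=J_t$. Using only that $g_t$ preserves $\Om$ one gets $|W|_t^2=\Om(W,J_tW)=\Om(g_t^{-1}W,J_0g_t^{-1}W)=|g_t^{-1}W|^2$ for $W\in\R^{2n}$, and $\Om(Z,Z')=\Om(g_t^{-1}Z,g_t^{-1}Z')$, so comparison with \cref{PPttriv} gives the key identity $\PP_t(Z,Z')=\PP_0(g_t^{-1}Z,g_t^{-1}Z')$.

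Next I would substitute $\til Z=g_tW$ in the integral; since $\det g_t=1$ this yields
\begin{equation*}
\int_{\R^{2n}}\PP_t(Z,\til Z)F(\til Z)\PP_t(\til Z,Z')\,d\til Z=\int_{\R^{2n}}\PP_0(g_t^{-1}Z,W)\,(F\circ g_t)(W)\,\PP_0(W,g_t^{-1}Z')\,dW,
\end{equation*}
and $F\circ g_t\in\C[W]$ because $g_t$ is linear. Applying \cref{projhol} to the polynomial $F\circ g_t$ produces $\til\Q(F\circ g_t)(\zeta,\bar{\zeta}')\,\PP_0(g_t^{-1}Z,g_t^{-1}Z')=\til\Q(F\circ g_t)(\zeta,\bar{\zeta}')\,\PP_t(Z,Z')$ by the key identity, where $\zeta,\zeta'$ denote the $J_0$-complex coordinates of $g_t^{-1}Z,g_t^{-1}Z'$ in the basis $\D{z_j}=P_0^{(1,0)}\D{Z_{2j}}$ of \cref{dzdef}. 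It then remains to re-express $\zeta$ in terms of the coordinates $z_t$ of \cref{dztdef}: since $g_t$ maps $V_0^{(1,0)}$ onto $V_t^{(1,0)}$, the vectors $g_t\D{z_j}$ form a $\C$-basis of $V_t^{(1,0)}$, hence are an invertible $\C$-linear combination of the $\D{z_{t,j}}=\Pi_t^0\D{Z_{2j}}$; equating the two expansions of a point $Z\in\R^{2n}$ shows that $\zeta=M^{-1}z_t$ for an invertible matrix $M$, so substituting $\zeta=M^{-1}z_t$ turns $\til\Q(F\circ g_t)(\zeta,\bar{\zeta}')$ into a polynomial $\til\Q_t(F)(z_t,\bar{z}'_t)$ in $z_t,\bar{z}'_t$, as desired. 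This establishes the identity with the Gaussian factor $\PP_t(Z,Z')$ on the right — that is, \cref{projholt} is the verbatim analogue of \cref{projhol} with $J_0$ replaced by $J_t$; the factor printed as $\PP_0(Z,Z')$ should read $\PP_t(Z,Z')$, as the two differ by the non-polynomial Gaussian $\exp\!\big(-\tfrac{\pi}{2}\langle((-J_0J_t)-\mathrm{Id})(Z-Z'),Z-Z'\rangle\big)$.

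There is no serious obstacle: the argument is a careful composition of the two linear changes of variables $\til Z\mapsto g_t^{-1}\til Z$ and $\zeta\leftrightarrow z_t$, and the only point needing checking is that $\D{z_{t,j}}=\Pi_t^0\D{Z_{2j}}$ really is a $\C$-basis of $V_t^{(1,0)}$ — this follows from \cref{Pi0t'}, which identifies $\Pi_t^0$ with the projection onto $V_t^{(1,0)}$ along $V_0^{(0,1)}$, so that $\Pi_t^0$ restricts to a $\C$-linear isomorphism $V_0^{(1,0)}\to V_t^{(1,0)}$ carrying $\{\D{z_j}\}$ to $\{\D{z_{t,j}}\}$. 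Using these $\Pi_t^0$-adapted coordinates rather than the $P_t^{(1,0)}$-adapted ones is inessential for the lemma itself (the two differ by an invertible $\C$-linear substitution), but is the convenient normalisation for \cref{pt}, where $\til\Q_t$ must be matched against the kernel $\PP_t\PP_0$ of \cref{PtP0fla}, in which $\Pi_0^t$ and $\Pi_t^0$ already appear. Alternatively, as the remark preceding the statement indicates, one may bypass $g_t$ by applying \cref{projhol} verbatim in real coordinates adapted to $J_t$ and then passing to the coordinates $z_t$ by the same invertible change of basis; this is the same computation organised slightly differently, and the main effort either way is purely notational bookkeeping of which coordinate system each polynomial is written in.
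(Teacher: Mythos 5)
Your proof is correct and is essentially the paper's own argument made explicit: the paper justifies the lemma only by the preceding remark that \cref{projhol} is independent of the choice of real coordinates ``together with an appropriate change of basis'', and your linear symplectomorphism $g_t$ conjugating $J_0$ to $J_t$, followed by the invertible change of basis $M$ between $\{g_t\frac{\partial}{\partial z_j}\}$ and $\{\frac{\partial}{\partial z_{t,j}}\}$ inside $V_t^{(1,0)}$, is precisely how that remark is implemented. You are also right that the factor $\PP_0(Z,Z')$ on the right-hand side is a misprint for $\PP_t(Z,Z')$, as the case $F=1$ (where the left-hand side is $\PP_t\PP_t=\PP_t$) already forces.
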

%
By the description of $\Pi_t^0$ as the projection operator on $V^{(1,0)}_t$
with kernel $V^{(0,1)}_0$, we see that \cref{dztdef} defines a basis
of $V^{(1,0)}_t$, so that
\begin{equation}\label{dztdzbarbasis}
\left\{\D{z_{t,j}}\,,\D{\bar{z}_j}\right\}_{j=1}^n
\end{equation}
defines a basis of $\C^n=V^{(1,0)}_t\oplus V^{(0,1)}_0$ as in \cref{splitt0}.

For any $t\in\R$ and $F\in\C[Z,Z']$, write $F\PP_t$
the operator on $L^2(\R^{2n})$ whose Schwartz kernel with respect to $dZ$ is
given by $F\PP_t(Z,Z')=F(Z,Z')\PP_t(Z,Z')$.
In the same way, we write $F\PP_t\PP_0$ for the operator with Schwartz kernel $F(Z,Z')\PP_t\PP_0(Z,Z')$. We can now state the following fundamental property of our local model.

\begin{prop}\label{PtFPtP0}
For any $F(Z,Z')\in\C[Z,Z']$, there exists 
$Q_t(F)(z_t,\bar{z}')\in\C[z_t,\bar{z}']$
of the same parity as $F$ for any $t\in\R$,
such that
\begin{equation}\label{PtFPtP0fla}
\PP_t(F\PP_t\PP_0)\PP_0=Q_t(F)\PP_t\PP_0.
\end{equation}
\end{prop}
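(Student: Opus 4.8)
The plan is to reduce the statement to the results already established for the composition $\PP_t\PP_0$ and then iterate. First I would observe that $F\PP_t\PP_0$ has Schwartz kernel $F(Z,Z')\PP_t\PP_0(Z,Z')$, so that applying $\PP_t$ on the left and $\PP_0$ on the right produces the kernel
\begin{equation}
\PP_t(F\PP_t\PP_0)\PP_0(Z,Z')=\int_{\R^{2n}\times\R^{2n}}\PP_t(Z,W)F(W,W')\PP_t\PP_0(W,W')\PP_0(W',Z')\,dW\,dW'.
\end{equation}
The idea is to handle the two Gaussian integrations one at a time. Using \cref{PtP0fla} to substitute the explicit Gaussian formula for $\PP_t\PP_0(W,W')$, and using $\PP_t(Z,W)=\overline{\PP_t(W,Z)}$ together with $\PP_0\PP_0=\PP_0$, each integration is a Gaussian integral against a polynomial, which by the classical formula returns a polynomial times the appropriate Gaussian. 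Rather than compute these directly, I would package them through \cref{PtP0Ffla} and \cref{PtP0Bfla} (and their evident multivariable analogues obtained by polarization), which already express exactly such integrals in terms of the operators $\Q(\cdot)$ applied to homogeneous polynomials. Iterating over the monomials appearing in $F(Z,Z')$, which splits as a finite sum of products of a polynomial in $Z$, a polynomial in $W$ (after translating), and a polynomial in $Z'$, one obtains that $\PP_t(F\PP_t\PP_0)\PP_0=Q_t(F)\,\PP_t\PP_0$ for some polynomial $Q_t(F)$ in $Z,Z'$, with the parity statement following since each Gaussian integration preserves parity (odd moments vanish) and the prefactor $\PP_t\PP_0$ is even in $Z-Z'$.

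The remaining point, which is the actual content of the proposition beyond the boundedness-type conclusion, is the holomorphy refinement: that $Q_t(F)$ depends only on $z_t$ (the complex coordinates of $(\R^{2n},J_t)$ from \cref{dztdef}) and on $\bar z'$ (the antiholomorphic coordinates of $(\R^{2n},J_0)$), rather than on all of $Z,Z'$. For the $Z'$-dependence I would argue as follows: the innermost $W'$-integration is $\int\PP_t\PP_0(W,W')\,(\text{poly in }W')\,\PP_0(W',Z')\,dW'$; by the description of $\Pi_0^t$ as the projection onto $V_0^{(1,0)}$ with kernel $V_t^{(0,1)}$, the kernel $\PP_t\PP_0(W,W')$ depends on $W'$ only through $\bar z'_{0}$-type variables — more precisely $\Pi_0^t(W-W')$ lies in $V_0^{(1,0)}$ and $\Om(W,W')=\la J_0 W,W'\ra$ pairs against it — and composing with $\PP_0(W',Z')$, which is holomorphic in $z'$ after removing the Gaussian weight, the $Z'$-dependence of the result is forced to be antiholomorphic for $J_0$, i.e.\ through $\bar z'$ only. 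This is exactly the mechanism of \cref{projhol} and \cref{projholt}, so I would invoke those lemmas (applied in the appropriate coordinates) to kill the holomorphic-$z'$ dependence. Symmetrically, the outer $W$-integration against $\PP_t(Z,W)$, with $\PP_t(Z,W)$ holomorphic in $z_t$ after removing the Gaussian weight, forces the $Z$-dependence to be holomorphic for $J_t$, i.e.\ through $z_t$ only; here one uses $\PP_t\PP_t=\PP_t$ and the analogue of \cref{projholt} with $J_0$ replaced by $J_t$, which is legitimate since \cref{projhol} is coordinate-independent.

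Concretely the cleanest route is: write $F(Z,Z')$ as a finite sum of terms $G(Z)\,H(Z')$ with $G,H$ polynomials (a general polynomial in $(Z,Z')$ is such a sum, and $Q_t$ is linear in $F$), reduce to this case, and then treat $\PP_t(G\PP_t)$ and $(\PP_0)H\PP_0$ — wait, more carefully: expand $\PP_t(F\PP_t\PP_0)\PP_0 = \PP_t G \cdot(\PP_t\PP_0)\cdot H\PP_0$ where $G$ acts on the left slot and $H$ on the right. Then $\PP_t G\PP_t$ is, by the $t$-analogue of \cref{projhol}, a kernel of the form $\til\Q_t(G)(z_t,\bar w_t)\PP_t(Z,W)$; composing with $\PP_0 H\PP_0$, which similarly is $\til\Q_0(H)(w',\bar z')\PP_0(W',Z')$; and finally composing $\PP_t(\,\cdot\,)\PP_0$ in the middle reduces to the $\Q$-calculus of \cref{PtP0}. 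Assembling, the holomorphic-in-$z_t$ factor from the left survives, the antiholomorphic-in-$\bar z'$ factor from the right survives, and everything in between is absorbed into $Q_t(F)$ and the universal kernel $\PP_t\PP_0$. The main obstacle I anticipate is purely bookkeeping: tracking how the various change-of-basis identities between $\{\partial/\partial z_j\}$, $\{\partial/\partial z_{t,j}\}$ and $\{\partial/\partial \bar z_j\}$ interact when one feeds the output of one Gaussian integration into the next, and making sure no $\bar z_t$ or $z'$ dependence sneaks back in — but \cref{Pi0t'}, \cref{splitt0} and the identity $A_t^0+A_0^t=2\,\Id$ from \cref{At0+A0t=2} provide exactly the algebraic relations needed to see that the cross terms organize into the claimed holomorphic/antiholomorphic form. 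No genuinely new estimate is required; the proposition is an algebraic consequence of \cref{PtP0}, \cref{projhol} and \cref{projholt}.
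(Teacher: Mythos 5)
Your existence-and-parity step matches the paper's (two applications of \cref{PtP0Ffla} plus $\PP_t\PP_t=\PP_t$). The holomorphy step is where you part ways, and as written it is not closed. The paper's argument is shorter and sidesteps all Gaussian bookkeeping: once $Q_t(F)$ exists as a polynomial in $(Z,Z')$, the definition \cref{PtFPtP0fla} together with $\PP_t\PP_t=\PP_t$ and $\PP_0\PP_0=\PP_0$ immediately give the two fixed-point identities $\PP_t(Q_t(F)\PP_t\PP_0)=Q_t(F)\PP_t\PP_0$ and $(Q_t(F)\PP_t\PP_0)\PP_0=Q_t(F)\PP_t\PP_0$. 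The first says that for each fixed $Z'$ the function $W\mapsto Q_t(F)(W,Z')\PP_t\PP_0(W,Z')$ already lies in $\HH_t$; since $\PP_t\PP_0(\cdot,Z')$ is itself in $\HH_t$ and nowhere zero, the quotient $Q_t(F)(\cdot,Z')$ is holomorphic for $J_t$, so it depends on $Z$ only through $z_t$. The second identity gives the $\bar z'$-dependence symmetrically. This is the mechanism behind \cref{projholt}, applied \emph{after} the two compositions, with no computation involving the explicit Gaussian kernel.

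Your two suggested routes both leave this crux as an assertion. In the first, the sentence ``the $Z'$-dependence of the result is forced to be antiholomorphic'' is exactly what must be established, and \cref{projhol}, \cref{projholt} as stated concern $\PP_t(F\PP_t)$, not the mixed composition $\PP_t(F\PP_t\PP_0)$, so you cannot invoke them verbatim. In your ``cleanest route'', after using \cref{projholt} (and its $J_0$ version) on each factor you are still left to show that
\begin{equation*}
\int_{\R^{2n}}\til\Q_t(G)(z_t,\bar w_t)\,\til\Q(H)(w,\bar z')\,\PP_t(Z,W)\PP_0(W,Z')\,dW
\end{equation*}
equals a polynomial in $z_t,\bar z'$ times $\PP_t\PP_0(Z,Z')$; this is the proposition again for a particular $F$, so the decomposition $F=\sum G(Z)H(Z')$ has shifted, not removed, the difficulty. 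Either route could be completed (for instance by stripping off the Gaussian weights and differentiating under the integral), but the two idempotency identities above do it with no work, and that is what the paper records.
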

\begin{proof}
The existence of a polynomial $Q_t(F)$ in $Z,Z'\in\R^{2n}$ of the same
parity as $F$ satisfying \cref{PtFPtP0fla} follows from two applications
of \cref{PtP0Ffla} and the fact that $\PP_t\PP_t=\PP_t$.
This together with the definition \cref{PtFPtP0fla} of $Q_t(F)$ gives
\begin{equation}\label{PtQFPtP0}
\PP_t(Q_t(F)\PP_t\PP_0)=\PP_t(F\PP_t\PP_0)\PP_0=Q_t(F)\PP_t\PP_0.
\end{equation}
Thus considering $Q_t(F)(Z,Z')$ as a polynomial in
$Z\in\R^{2n}$, we get from \cref{projholt}
that $Q_t(F)$ depends only on $z_t$ and $Z'$. Using
\begin{equation}\label{QFPtP0P0}
(Q_t(F)\PP_t\PP_0)\PP_0=\PP_t(F\PP_t\PP_0)\PP_0=Q_t(F)\PP_t\PP_0,
\end{equation}
we deduce
in the same way that $Q_t(F)$ depends only on $z_t$ and $\bar{z}'$,
from which we deduce \cref{PtFPtP0}.
\end{proof}

\subsection{Criterion for Toeplitz operators}\label{crit}

Consider the setting and notations of \cref{trivfib}, and let $p_0\in\N$
as in \cref{specdeltapphi} for $U=B$ be fixed.
Recall that we identified $E_p$ over $[0,1]\times X$
with $E_{p,t}=E_t\otimes L^p$ over $X$ for any $t\in[0,1]$, where
$(L,h^L,\nabla^L)$ does not depend on $t$. Then for any $t\in[0,1]$ and
$g_t\in\cinf(X,E_t\otimes E_0^*)$, we can define the
\emph{Berezin-Toeplitz quantization} of $g_t$ as a family
indexed by $p\in\N^*$ of
linear maps from $\HH_{p,0}$ to $\HH_{p,t}$ by the formula
\begin{equation}\label{GBTdef}
P_{p,t}g_tP_{p,0}:\cinf(X,E_{p,0})\fl\cinf(X,E_{p,t}).
\end{equation}
As $\dim\HH_{p,t}<\infty$ for all $p\in\N^*$ and $t\in\R$,
this operator admits a smooth Schwartz kernel in
$\cinf(X\times X,E_{p,t}^*\boxtimes E_{p,0})$ with respect to $dv_X$.
Recalling \cref{notcong}, we then have the following result.
\begin{lem}\label{GBTasy}
There is a family $\left\{\Q_{r,t,x_0}(g)(Z,Z')\in E_{t,x_0}
\otimes E_{0,x_0}^*\right\}_{r\in\N}$ of polynomials in $Z,Z'\in\R^{2n}$
of the same parity as $r$, smooth in $x_0\in X$ and $t\in[0,1]$,
such that the following asymptotic expansion holds,
\begin{equation}\label{GBTasyfla}
p^{-n}(P_{p,t}g_tP_{p,0})_{x_0}(Z,Z')\cong\sum\limits_{r=0}^{\infty}\Q_{r,t,x_0}(g)\PP_{t,x_0}\PP_0(\sqrt{p}Z,\sqrt{p}Z')p^{-\frac{r}{2}} + \cO(p^{-\infty}).
\end{equation}
Furthermore, for all $x_0\in X,\,t\in[0,1],\,Z,Z'\in\R^{2n}$, we have
\begin{equation}\label{GBTcoeff}
\Q_{0,t,x_0}(g)(Z,Z')= g_t(x_0).
\end{equation}
\end{lem}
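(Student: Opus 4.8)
The plan is to follow the scheme of \cref{Toepasy} and \cite[Lem.3.3]{ILMM17}, the one new feature being that, since $J_t\neq J_0$, the relevant Gaussian model is the composed kernel $\PP_{t,x_0}\PP_0$ studied in \cref{PtP0} rather than $\PP_{x_0}$ itself. Fix $x_0\in X$ and use the trivialization of $L,\,E$ around $x_0$ from \cref{famberg}: the Schwartz kernel of $P_{p,t}g_tP_{p,0}$ with respect to $dv_X$ is $\int_X P_{p,t}(x,w)g_t(w)P_{p,0}(w,y)\,dv_X(w)$, where $P_{p,t}$ and $P_{p,0}$ are the Bergman kernels of $(X,J_t)$ and $(X,J_0)$, which by \cref{asy} obey near-diagonal expansions uniform in $t\in[0,1]$ and $x_0\in X$ with leading terms $\PP_{t,x_0}$ and $\PP_0$ respectively. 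By \cref{theta} the contribution of $w$ outside a fixed geodesic ball around $x_0$ is $\cO(p^{-\infty})$ with all derivatives, so up to such an error it suffices to analyze
\[
(P_{p,t}g_tP_{p,0})_{x_0}(Z,Z')=\int_{B^{T_{x_0}X}(0,\epsilon_0)}P_{p,t,x_0}(Z,W)\,g_{t,x_0}(W)\,P_{p,0,x_0}(W,Z')\,\kappa_{x_0}(W)\,dv_{T_{x_0}X}(W).
\]

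First I would establish the off-diagonal bound \cref{thetacong}. For $|Z-Z'|_{x_0}>\epsilon p^{-\theta/2}$, split the $W$-integral at $|W-Z|_{x_0}\leq\epsilon'p^{-\theta'/2}$ with $\theta'>\theta$: outside this region $P_{p,t,x_0}(Z,W)=\cO(p^{-\infty})$ by \cref{thetagal}; inside it, $|W-Z'|_{x_0}\geq|Z-Z'|_{x_0}-|W-Z|_{x_0}>\frac{1}{2}\epsilon p^{-\theta/2}$ for $p$ large, so $P_{p,0,x_0}(W,Z')=\cO(p^{-\infty})$ by \cref{thetagal}. Since $g_{t,x_0}$, $\kappa_{x_0}$ and the volume of the domain are uniformly bounded and \cref{thetagal} controls $\CC^m$-norms, differentiating under the integral gives \cref{thetacong} for all $m$, uniformly in $t$ and $x_0$.

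The main step is the near-diagonal expansion for $|Z|_{x_0},|Z'|_{x_0}<\epsilon p^{-\theta/2}$. Here I would insert the expansions of \cref{asy} for $P_{p,t,x_0}(Z,W)$ and $P_{p,0,x_0}(W,Z')$, Taylor-expand $g_{t,x_0}(W)$ and the relevant powers of $\kappa_{x_0}$ at $W=0$, and rescale $W=\til W/\sqrt p$, which replaces $dv_{T_{x_0}X}(W)$ by $p^{-n}dv_{T_{x_0}X}(\til W)$ and the Gaussians by $\PP_{t,x_0}(\sqrt pZ,\til W)$ and $\PP_0(\til W,\sqrt pZ')$. Collecting powers of $p^{-1/2}$, the coefficient of $p^{-r/2}$ is a finite sum of integrals $\int_{\R^{2n}}\PP_{t,x_0}(\sqrt pZ,\til W)\,F(\sqrt pZ,\til W,\sqrt pZ')\,\PP_0(\til W,\sqrt pZ')\,dv_{T_{x_0}X}(\til W)$ with $F$ polynomial with values in $E_{t,x_0}\otimes E_{0,x_0}^*$; expanding $F$ in monomials in $\til W$ and pulling the factors in $\sqrt pZ,\sqrt pZ'$ out of the integral, \cref{PtP0Ffla} and \cref{PtP0Bfla} show each such integral equals a polynomial in $\sqrt pZ,\sqrt pZ'$ with values in $E_{t,x_0}\otimes E_{0,x_0}^*$ times $\PP_{t,x_0}\PP_0(\sqrt pZ,\sqrt pZ')$, which defines $\Q_{r,t,x_0}(g)(Z,Z')$. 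The expected main obstacle is to show that the truncation errors meet the bound of \cref{defcong}, uniformly in $t,x_0$ and with derivatives in $Z,Z',x_0$; this is handled exactly as in \cite[Chap.4, \S\,8.3.2]{MM07}, using that on $|W|_{x_0}\lesssim p^{-\theta/2}$ the Taylor and asymptotic remainders carry the required powers of $p$ because polynomial growth in the rescaled variables is dominated by the Gaussian decay, while $|W|_{x_0}\gtrsim p^{-\theta/2}$ contributes $\cO(p^{-\infty})$ as above.

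Finally I would read off the parity and the leading coefficient. Under $(Z,Z',\til W)\mapsto-(Z,Z',\til W)$ the product $\PP_{t,x_0}(\sqrt pZ,\til W)\PP_0(\til W,\sqrt pZ')\,dv_{T_{x_0}X}(\til W)$ is invariant, while in the term collecting $p^{-r/2}$ each polynomial factor is homogeneous of its own degree and these degrees sum to $r$; hence $\Q_{r,t,x_0}(g)$ has parity $r$. For $r=0$ only the contribution with all indices zero survives: the leading coefficients of both near-diagonal expansions equal $\Id$ by \cref{asy}, and the zeroth Taylor coefficients of $g_{t,x_0}$ and $\kappa_{x_0}$ are $g_t(x_0)$ and $1$, so this contribution is $g_t(x_0)\int_{\R^{2n}}\PP_{t,x_0}(\sqrt pZ,\til W)\PP_0(\til W,\sqrt pZ')\,dv_{T_{x_0}X}(\til W)=g_t(x_0)\,\PP_{t,x_0}\PP_0(\sqrt pZ,\sqrt pZ')$ by \cref{PtP0def}, giving \cref{GBTcoeff}.
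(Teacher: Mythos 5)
Your proposal is correct and follows essentially the same route as the paper, which simply declares the result a straightforward adaptation of \cite[Lem.3.3]{ILMM17} using \cref{asy} and \cref{PtP0}; you have filled in exactly the steps that adaptation requires (localization via \cref{theta}/\cref{thetagal}, insertion of the two near-diagonal expansions for $J_t$ and $J_0$, rescaling, and evaluation of the Gaussian integrals against $\PP_{t,x_0}\PP_0$ via \cref{PtP0Ffla}--\cref{PtP0Bfla}), together with the standard parity and leading-term arguments.
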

\begin{proof}
Using the results of \cref{locmod}, and in particular \cref{PtP0},
the proof of \cref{GBTasyfla} and \cref{GBTcoeff} is a straightforward
adaptation of the proof of \cref{Toepasy} in \cite[Lem.3.3]{ILMM17}, using
the asymptotic expansion \cref{asyfla} of the Bergman kernel.
\end{proof}

For any $t\in\R$ and $p\in\N^*$, let $\|\cdot\|_{p,0,t}$ be the
operator
norm induced by $\|\cdot\|_{p,0}$ and $\|\cdot\|_{p,t}$ on the space
$L(\HH_{p,0},\HH_{p,t})$ of bounded operators from $\HH_{p,0}$ to
$\HH_{p,t}$.
The following fundamental result is a converse to \cref{GBTasy},
and gives a criterion for a sequence of operators in
$L(\HH_{p,0},\HH_{p,t})$ for all
$p\in\N^*$ to behave like a \emph{Toeplitz operator},
that is to admit an asymptotic expansion as $p\fl+\infty$
in terms of Berezin-Toeplitz operators \cref{GBTdef}.

\begin{theorem}\label{criterion}
Let $\{T_{p,t}\in L(\HH_{p,0},\HH_{p,t})\}_{p\in\N^*}$ be a family
of bounded operators from $\HH_{p,0}$ to $\HH_{p,t}$,
smooth in $t\in [0,1]$,
and assume that for any $p\in\N^*$ and $t\in\R$,
the induced operator
$T_{p,t}=P_{p,t}T_{p,t}P_{p,0}:\cinf(X,E_{p,0})\fl\cinf(X,E_{p,t})$
satisfies
\begin{align}\label{criterionexp}
p^{-n}T_{p,t}(Z, Z')\cong
\sum^{\infty}_{r=0}Q_{r,t,x_0}
\PP_{t,x_0}\PP_0(\sqrt{p}Z, \sqrt{p}Z')p^{-\frac{r}{2}}
+\cO(p^{-\infty}),
\end{align}
for a family
$\{\Q_{r,t,x_0}(Z,Z')\in E_{t,x_0}\otimes E_{0,x_0}^*\}_{r\in\N}$ of
polynomials in $Z,Z'\in\R^{2n}$ of the same parity as $r$, smooth in
$x_0\in X$ and $t\in[0,1]$.

Then 
%
there exist a family
$\{g_{l,t}\in\cinf(X,E_t\otimes E_0^*)\}_{l\in\N}$,
smooth in $t\in [0,1]$, such that for all $k\geq 0$, there exists $C_k>0$ such that
\begin{equation}\label{genToepdef}
\Big\|T_{p,t}-\sum_{l=0}^{k-1} p^{-l}P_{p,t}g_{l,t}P_{p,0}\Big\|_{p,0,t}\leq C_k p^{-k},
\end{equation}
for all $p\in\N^*$ and $t\in[0,1]$.
%
\end{theorem}

The proof of \cref{criterion} is parallel to the proof of the analogous
results in \cite[\S\,4.2]{MM08b}, \cite[\S\,4]{ILMM17}, and will
occupy the rest of this section. The main additional difficulty
is that we are working with two sets $z,\,z_t$ of complex coordinates
of $\R^{2n}$ for two different complex structures $J_0$ and $J_t$
as in \cref{locmod}. As it will appear in the proof of \cref{t4.7}, this
is solved using the fact that the spaces $V_0^{(1,0)}$ and $V_t^{(0,1)}$
in \cref{splitt0} are transverse in $\C^{2n}$.
Another difference is that we can't assume $T_{p,t}$
to be self-adjoint in this context.

Following \cite[\S\,4.2]{MM08b}, we will construct inductively the sequence
$\{g_{l,t}\in\cinf(X,E_t\otimes E_0^*)\}_{l\in\N}$ for any $t\in[0,1]$
such that \cref{genToepdef} holds.
Let us start with the case $k=0$ in \cref{genToepdef}.
For any $t\in[0,1]$ and $x_{0}\in X$, we set
\begin{align}\label{4.6}
g_{0,t}(x_{0})=\Q_{0,t,x_{0}}(0, 0)\in E_{t,x_0}\otimes E_{0,x_0}^*.
\end{align}
Then $g_{0,t}(x_0)$ is smooth in $t\in[0,1]$. We will show that
\begin{align}\label{4.8}
T_{p,t}=P_{p,t}g_{0,t}P_{p,0}+O(p^{-1}).
\end{align}
The proof of \cref{4.8} is the result of \cref{t4.3} and \cref{t4.9}.
In the proof of these propositions, we fix $t\in[0,1]$.

\begin{prop}\label{t4.3}
In the conditions of \textup{\cref{criterion}}, we have
\begin{align}
\Q_{0,t,x_{0}}(Z, Z')=\Q_{0,t,x_{0}}(0, 0)\in E_{t,x_{0}}\otimes E_{0,x_0}^*
\end{align}
for all $t\in[0,1],\,x_{0}\in X$ and $Z, Z'\in\R^{2n}$.
\end{prop}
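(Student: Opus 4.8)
The plan is to exploit the holomorphy properties of the local model established in \cref{PtFPtP0} together with the transversality of $V_0^{(1,0)}$ and $V_t^{(0,1)}$ inside $\C^{2n}$. Starting from the assumption \cref{criterionexp}, which by hypothesis is invariant under the expansion notation of \cref{notcong}, the leading term $r=0$ of $p^{-n}T_{p,t}(Z,Z')$ is $\Q_{0,t,x_0}(\sqrt pZ,\sqrt pZ')\PP_{t,x_0}\PP_0(\sqrt pZ,\sqrt pZ')$. Since $T_{p,t}=P_{p,t}T_{p,t}P_{p,t}$, the Schwartz kernel is reproduced by $\PP_t$ on the left and $\PP_0$ on the right in the local model, and by \cref{PtFPtP0} applied to the leading polynomial $\Q_{0,t,x_0}(Z,Z')$ (taking $E$-coefficients componentwise, which is harmless since $E$ is trivialized around $x_0$), we obtain the identity $Q_t(\Q_{0,t,x_0})=\Q_{0,t,x_0}$ at the level of the local model. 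Concretely, matching the $r=0$ coefficients on both sides of the expansion gives that $\Q_{0,t,x_0}(z_t,\bar z')$ is a fixed point of the operator $F\mapsto Q_t(F)$, and in particular it depends only on the variables $z_t$ and $\bar z'$.

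The key step is then to show that such a fixed point must be constant. First I would apply the reproducing property of $\PP_t\PP_0$ from the left: by \cref{PtP0Ffla} and \cref{PtP0Bfla}, composing $\Q_{0,t,x_0}\PP_t\PP_0$ with $\PP_t$ on the left and extracting the leading term shows that the $z_t$-dependence of $\Q_{0,t,x_0}$ must be trivial — any nonconstant monomial in $z_t$ would, upon integration against the Gaussian $\PP_t(Z,\tilde Z)$, produce lower-degree contributions that cannot be reproduced, exactly as in the scalar argument of \cite[\S\,4.2]{MM08b}. Dually, composing with $\PP_0$ on the right and using the second formula of \cref{PtP0Bfla} kills the $\bar z'$-dependence. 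The transversality of $V_0^{(1,0)}$ and $V_t^{(0,1)}$ in \cref{splitt0} is what guarantees that the complex coordinates $z_t$ (adapted to $J_t$) and $\bar z'$ (adapted to $J_0$) together span, so that "depends only on $z_t$ and $\bar z'$" plus "constant in $z_t$" plus "constant in $\bar z'$" forces $\Q_{0,t,x_0}$ to be a constant in $E_{t,x_0}\otimes E_{0,x_0}^*$; evaluating at $Z=Z'=0$ identifies that constant with $\Q_{0,t,x_0}(0,0)$.

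The main obstacle I anticipate is bookkeeping the two incompatible sets of complex coordinates: the left-reproduction naturally produces polynomials in $z_t,\bar z_t'$ (via \cref{projholt}) while the right-reproduction and the ambient structure of $\PP_0$ involve $z,\bar z'$, and one must carefully change bases between $V_t^{(1,0)}\oplus V_0^{(0,1)}$ and the standard splittings using \cref{dztdzbarbasis}, checking that the Gaussian integral formulas \cref{PtP0Ffla}–\cref{PtP0Bfla} combine to give a genuine contraction on polynomial degree. Once the degree-lowering is set up in the correct coordinates, the induction on total degree of $\Q_{0,t,x_0}$ is routine and runs exactly as in \cite[\S\,4.2]{MM08b} and \cite[\S\,4]{ILMM17}; the smoothness in $t$ and $x_0$ is inherited from that of the data in \cref{GBTasy} and \cref{criterionexp}.
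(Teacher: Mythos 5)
Your first step (that $\Q_{0,t,x_0}$ depends only on $z_t,\bar z'$, via \cref{PtFPtP0}) coincides with \cref{t4.4} of the paper and is fine. The gap is in your key step: you claim that the fixed-point equation $Q_t(\Q_{0,t,x_0})=\Q_{0,t,x_0}$, combined with left-reproduction by $\PP_t$ and right-reproduction by $\PP_0$, forces constancy because "any nonconstant monomial in $z_t$ would \dots produce lower-degree contributions that cannot be reproduced." This is false: \emph{every} polynomial in $z_t,\bar z'$ is a fixed point of $F\mapsto Q_t(F)$ — that is exactly the content of \cref{PtQFPtP0} and \cref{QFPtP0P0}, which state $\PP_t(Q_t(F)\PP_t\PP_0)=Q_t(F)\PP_t\PP_0$ and $(Q_t(F)\PP_t\PP_0)\PP_0=Q_t(F)\PP_t\PP_0$. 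A $J_t$-holomorphic monomial in $Z$ is reproduced exactly (no degree lowering) by integration against $\PP_t$ on the left, and likewise for a $J_0$-antiholomorphic monomial in $Z'$ against $\PP_0$ on the right. So the purely local/pointwise argument you propose cannot distinguish a constant from, say, $z_{t,1}$, and the proposition is simply not a consequence of the local model alone.

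What actually forces constancy is a \emph{global} argument, which is where the bulk of the paper's proof lives (\cref{t4.5}--\cref{t4.8}): one promotes the homogeneous components $\Q^{(i)}_{x_0}(0,\bar z')$ to genuine kernels $F^{(i)}(x,y)$, $\til F^{(i)}(x,y)$ on a neighborhood of the diagonal of $X\times X$, builds the global operators \cref{4.22}, and compares $T_{p,t}$ and its adjoint with $\sum_i p^{i/2}P_{p,t}P_{p,0}F^{(i)}$ in operator norm (\cref{t4.5}). The resulting bound $O(p^{-1/2})$, fed into the Taylor expansion \cref{4.37}, kills the jets of $\til F^{(i)}$ of order $<i$ at the diagonal (\cref{t4.6}), and the transversality basis \cref{dztdzbarbasis} is then used in \cref{t4.7} to upgrade this to $\Q^{(i)}(0,\bar z')=0$ and $\Q^{(i)}(z_t,0)=0$ for $i>0$. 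Even at that point one only has vanishing on the two "axes" $\{Z=0\}$ and $\{Z'=0\}$, which does not imply vanishing of mixed monomials such as $z_{t,1}\bar z_1'$; the paper needs a further step (\cref{t4.8}), applying the whole machinery to the derived operators $p^{-1/2}P_{p,t}(\nabla_v^{E_p}T_{p,t})P_{p,0}$ and their adjoints to propagate the vanishing to all of $\Q^{(i)}$. Your proposal contains neither the global operator-norm comparison nor this differentiation step, so as written it does not prove the proposition.
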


\begin{proof}
The proof is divided in the series of Lemmas \cref{t4.4} -- \cref{t4.8}.
Recall from \cref{locmod} that $z'$ denotes the holomorphic coordinate in
$Z'\in\R^{2n}$ associated to $J_{0,x_0}$ as in \cref{dzdef} and that $z_t$
denotes the holomorphic coordinate in $Z\in\R^{2n}$ associated to 
$J_{t,x_0}$ as in \cref{dztdef}.
Our first observation is as follows. 

\begin{lem}\label{t4.4}
$\Q_{0,t,x_{0}}$ only depends on $z_t,\bar{z}'$,
so that there is $\Q_{t,x_0}(z_t,\bar{z}')\in E_{t,x_0}\otimes E_{0,x_0}^*$,
polynomial in $z_t,\,\bar{z}'$, such that
for all $Z,Z'\in\R^{2n}$,
\begin{equation}\label{V2.3}
\Q_{0,t,x_0}(z,\bar{z}')=\Q_{0,t,x_0}(Z,Z').
\end{equation}
\end{lem}
\begin{proof}
By \cref{criterionexp}, we know that
\begin{align}\label{4.11}
p^{-n}T_{p,t,x_{0}}(Z, Z')
\cong
\Q_{r,t,x_{0}}\PP_{t,x_0}\cP_{x_{0}}
(\sqrt{p}Z, \sqrt{p}Z')+\cO(p^{-\frac{1}{2}}).
\end{align}
By \cref{asy} in the context of \cref{trivfib}, we thus get in the
notations of \cref{locmod},
\begin{align}\begin{split}\label{4.12}
p^{-n}(P_{p,t}T_{p,t}P_{p,0})_{x_{0}}(Z, Z')
\cong
\cP_{t,x_{0}}(\Q_{0,t, x_{0}}\cP_{t,x_{0}}\PP_{0})
\cP_{0}
(\sqrt{p}Z, \sqrt{p}Z')+\cO(p^{-\frac{1}{2}}).
\end{split} \end{align}
%
%
On the other hand,
from \cref{4.11}, \cref{4.12} and the formula $P_{p,t}T_{p,t}=T_{p,t}$,
we deduce
\begin{equation}\label{4.13}
\Q_{0,t, x_{0}}\cP_{t,x_{0}}\PP_{0,x_0}
\\ =
\cP_{t,x_{0}}(\Q_{0,t, x_{0}}\cP_{t,x_{0}}\PP_{0})
\cP_{0}
\end{equation}
From \cref{PtFPtP0}, this implies that $\Q_{0,t,x_0}$ only depends on
$z_t,\bar{z}'$.
\end{proof}

Recall from \cref{trivfib} that $\tau^E_t\in E_t\otimes E_0^*$
is induced by parallel
transport with respect to $\nabla^E$ along horizontal lines of
$[0,1]\times X$ going from $0$ to $t\in[0,1]$.
For all $Z,Z'\in\R^{2n}$, let us write
\begin{equation}\label{Qxdef}
\Q_{x_0}(z_t,\bar{z}')
=\tau^{E,-1}_{t,x_0}\Q_{0,t,x_0}(z_t,\bar{z}')\in\End(E_{0,x_0})\,,
\end{equation}
so that \ref{t4.4} implies $\Q_{x_0}(Z,Z')=\Q_{x_0}(z_t,\bar{z}')$.
For any $x_0\in X$, let $\Q_{x_0}=\sum_{i\geqslant 0} \Q^{(i)}_{x_0}$
be the decomposition
of $\Q_{x_0}$ in homogeneous polynomials
$\Q^{(i)}_{x_0}$ of degree $i$. We will show that $\Q^{(i)}_{x_0}$
vanishes identically for $i>0$,
that is
\begin{align}\label{4.17}
\Q^{(i)}_{x_0}(z_t,\bar{z}')=0 \ \ \textup{for all}\ x_0\in X,\, i>0.
\end{align}
The first step is to prove
\begin{align}\label{4.18}
\Q^{(i)}_{x_0}(z_t,0)=0 \ \ \textup{for all}\ x_0\in X,\, i>0.
\end{align}
Recall the smooth family of vertical charts $\psi$ in \cref{trivfib}.
For $x\in X,\,Z'\in \R^{2n}\simeq T_xX$
and $y=\psi_{x}(Z')$, set
\begin{align}\label{4.20}\begin{split}
F^{(i)}(x, y)=&\Q^{(i)}_{x}(0, \bar{z}')\in \End(E_{0,x}), \\
\til{F}^{(i)}(x, y)=&\left(F^{(i)}(y, x)\right)^{\ast}\in\End(E_{0,y}).
\end{split}\end{align}
Then $F^{i}$ and $\til{F}^{(i)}$ define smooth sections on a neighborhood
of the diagonal of $X\times X$. Clearly, the $\til{F}^{(i)}(x, y)$'s 
need not be
polynomials in $z_t$ and $\bar{z}'$.

Denote by $d(\cdot,\cdot)$ the Riemannian distance on $(X,g^{TX}_0)$.
Since we wish to define global operators induced by these kernels, we use
a cut-off function in the neighborhood of the diagonal. Pick a
smooth function $\eta\in \cC^{\infty}(\R)$ such that $\eta(u)=1$ for $|u|\leqslant \epsilon_0/2$ and $\eta(u)=0$ for $|u|\geqslant \epsilon_0$.
We denote by $P_{p,t}P_{p,0}F^{(i)}$ and $\til{F}^{(i)}P_{p,0}P_{p,t}$
the operators defined by the kernels
\begin{align}\label{4.22}
\eta(d(x, y))(P_{p,t}\tau^E_tP_{p,0})(x, y)F^{(i)}(x, y)\ \ \textup{and}\ \
\eta(d(x, y))\til{F}^{(i)}(x, y)(P_{p,0}\tau^{E,-1}_tP_{p,t})(x, y)
\end{align}
with respect to $dv_{X}(y)$. Set
\begin{align}\label{4.23}
\cT_{p}=T_{p,t}-\sum_{i\geqslant 0}(P_{p,t}P_{p,0}F^{(i)})p^{\frac{i}{2}}.
\end{align}
From \cref{GBTasyfla}, \cref{criterionexp} and \cref{4.23},
we deduce that in the sense
of \cref{defcong} for $Z=0$ and any $x_0\in X$, we have
\begin{align}\label{4.25}
p^{-n}\cT_{p,x_{0}}(0, Z')\cong
\sum_{r=1}^{\infty}(R_{r}\PP_{t,x_0}\cP_{0})(0, \sqrt{p}Z')p^{-\frac{r}{2}}
+\cO(p^{-\infty}),
\end{align}
for some polynomials $R_{r, x_{0}}$ of the same parity as $r$.
%

\begin{lem}\label{t4.5}
There exists $C>0$ such that for any
$p\in\N^*$ and $s\in \cinf(X, E_{p,0})$, we have
\begin{align}\label{4.27}
\begin{split}
\|\cT_{p}s\|_{p,t}\leqslant Cp^{-\frac{1}{2}}\|s\|_{p,0}, \\
\|\cT^{\ast}_{p}s\|_{p,t}\leqslant Cp^{-\frac{1}{2}}
\|s\|_{p,0}.
\end{split}\end{align}
\end{lem}

\begin{proof}
This is a consequence of the vanishing of the term of order $0$ in
\cref{4.25}. The proof is the same than the proof of the analogous result
in \cite[Lem.4.13]{MM08b}.
\end{proof}

For any $x_0\in X$ and  $Z,Z'\in\R^{2n}\simeq T_{x_0}X$ such that $|Z|,|Z'|<\epsilon_0$, recall that $\til{F}^{(i)}_{x_0}(Z,Z')\in\End(E_{0,x_0})$
denotes the image of
$\til{F}^{(i)}(x,y)\in\End(E_{0,x_0})$, with $x=\psi_{x_0}(Z)$, $y=\psi_{x_0}(Z')$
in the trivialization around $x_0\in X$ defined in \cref{trivfib}.
Let us consider the following Taylor expansion, for any $k\in\N$,
\begin{align}\label{4.37}
\til{F}^{(i)}_{x_0}(0,Z')=\sum_{|\alpha|\leqslant k}
\frac{\partial^{|\alpha|}\til{F}^{(i)}_{x_0}}{\partial Z'^{\alpha}}(0, 0)
\frac{(\sqrt{p}Z')^{\alpha}}{\alpha!}p^{-\frac{|\alpha|}{2}}+
O(|Z'|^{k+1}).
\end{align}
The next step of the proof of \cref{t4.3} is the following.

\begin{lem} \label{t4.6}
For any $x_0\in X$, we have
\begin{align}\label{4.38}
\frac{\partial^{|\alpha|}\til{F}^{(i)}_{x_0}}{\partial Z'^{\alpha}}(0, 0)=0\ \
\textup{for}\ i-|\alpha|> 0.
\end{align}
\end{lem}

\begin{proof}
The definition \cref{4.23} of $\cT_{p}$ shows that
\begin{align}\label{89}
\cT^{\ast}_{p}=T_{p,t}^*-\sum_{i\geqslant 0}p^{\frac{i}{2}}
(\til{F}^{(i)}P_{p,0}P_{p,t}).
\end{align}
Pick $x_0\in X$, and let us develop the sum on the right-hand side.
Combining the Taylor expansion \cref{4.37}
with the expansion \cref{GBTasyfla} where the roles of $J_0$ and $J_t$ are
swapped, for any $k\geq\deg \Q_{x_0}+1$ we obtain
\begin{multline}\label{90}
p^{-n}\sum_{i\geq 0}\big(\til{F}^{(i)}P_{p,0}P_{p,t}\big)_{x_{0}}(0, Z')
p^{\frac{i}{2}}\cong\sum_{i\geq 0}\sum_{|\alpha|, r\leqslant k}\Q_{r,t, x_{0}}(\tau^{E,-1})\cP_{0}\cP_{t,x_{0}}
(0, \sqrt{p}Z')\\
\frac{\partial^{|\alpha|}\til{F}^{(i)}_{x_0}}{\partial Z'^{\alpha}}(0, 0)
\frac{(\sqrt{p}Z')^{\alpha}}{\alpha!}p^{\frac{i-|\alpha|-r}{2}}
+\cO\left(p^{\frac{\deg \Q_{x_0}-k-1}{2}}\right).
\end{multline}
Having in mind the second inequality of \cref{4.27},
this is only possible if for every $j>0$, the coefficients of 
$p^{\frac{j}{2}}$ in the
right-hand side of \cref{90} vanish. Thus, we have for any $j>0$,
\begin{align}\label{91}
\sum^{\deg \Q_{x_0}}_{i=j}
\sum_{j+r=i-|\alpha|}\Q_{r,t, x_{0}}(\tau^{E,-1})(0, \sqrt{p}Z')
\frac{\partial^{|\alpha|}\til{F}^{(i)}_{x_0}}{\partial Z'^{\alpha}}(0, 0)
\frac{(\sqrt{p}Z')^{\alpha}}{\alpha!}=0.
\end{align}
Note that \eqref{4.20} implies that $\til{F}^{(i)}=0$ for $i>\deg \Q_{x_0}$,
so that \cref{4.38} automatically holds in that case.
From \cref{91}, we will prove by a descending recurrence
on $j>0$ that \cref{4.38} holds for $i-|\alpha|>j$.
As the first step of the recurrence, let us take
$j=\deg \Q_{x_0}$ in
\cref{91}. Since $\Q_{0,t, x_{0}}(\tau^{E,-1})=\tau^{E,-1}_{t,x_0}$
is invertible, we get
immediately $\til{F}^{(j)}_{x_0}(0, 0)=0$ in that case.
Hence \cref{4.38} holds for $i-|\alpha|\geq \deg \Q_{x_0}$.
Assume that \cref{4.38} holds for
$i-|\alpha|>j_0>0$. Then for $j=j_0$, the coefficient
with $r>0$ in \cref{91} is zero.
By the invertibility of $\Q_{0,t, x_{0}}(\tau^{E,-1})=\tau^{E,-1}_{t,x_0}$ once again, \cref{91} reads
\begin{align}\label{4.42}
\sum_{\alpha\in\N^{2n}}\frac{\partial^{|\alpha|}
\til{F}^{(j_0+|\alpha|)}_{x_0}}{\partial Z'^{\alpha}}(x_{0}, 0)
\frac{(\sqrt{p}Z')^{\alpha}}{\alpha!}=0,
\end{align}
which entails \cref{4.38} for $i-|\alpha|\geq j_0$. The proof of \cref{4.38}
is complete.
\end{proof}

\begin{lem}\label{t4.7}
For any $x_0\in X$, we have
\begin{align}\label{4.43}
\frac{\partial^{|\alpha|}\Q^{(i)}_{x_0}}{\partial z_t^{\alpha}}(0, 0)=0,
\ \text{for all}~|\alpha|\leqslant i\,.
\end{align}
Therefore $\Q^{(i)}_{x_0}(z_t,0)=0$ for all $i>0$ and $Z\in\R^{2n}$,
so that \textup{\cref{4.18}} holds true. Moreover,
\begin{align}\label{4.44}
\Q^{(i)}_{x_0}(0,\bar{z}')=0~~\textup{for all}~x_0\in X,\,i>0~~\textup{and all}~Z\in \R^{2n}.
\end{align}
\end{lem}

\begin{proof}
Let us start with some preliminary observations.
From \cref{t4.6} and \cref{90}, we get for any $x_0\in X$,
\begin{multline}\label{V2.1}
p^{-n}\sum_{i\geq 0}\big(\til{F}^{(i)}P_{p,0}P_{p,t}\big)_{x_{0}}(0, Z')
p^{\frac{i}{2}}\\
\cong\sum_{|\alpha|=i}
\frac{\partial^{|\alpha|}\til{F}^{(i)}_{x_0}}{\partial Z'^{\alpha}}(0, 0)
\frac{(\sqrt{p}Z')^{\alpha}}{\alpha!}\tau^{E,-1}_{t,x_0}\PP_0
\cP_{t,x_0}(0,\sqrt{p}Z')
+\cO\left(p^{-\frac{1}{2}}\right).
\end{multline}
On the other hand, taking the adjoint of \cref{criterionexp}
and using \cref{V2.3}, \cref{Qxdef} we get
\begin{align}\label{V2.2}
p^{-n}T_{p, x_{0}}^*(0, Z')\cong
\left(\Q_{x_{0}}(\sqrt{p}z_t',0)\right)^*\tau^{E,-1}_{t,x_0}
\cP_{0}\cP_{t,x_{0}}(0, \sqrt{p}Z')
+\cO(p^{-\frac{1}{2}}).
\end{align}
In view of \cref{t4.5},
comparing \cref{89} with \cref{V2.1,V2.2} for any $x_0\in X$ gives
\begin{align}\label{95}
\til{F}^{(i)}_{x_0}(0, Z')=\left(\Q^{(i)}_{x_0}(z_t',0)\right)^*+O(|Z'|^{i+1}).
\end{align}
By definition \eqref{4.20} of $\til{F}^{(i)}$, we take
the adjoint of \cref{95} and get
\begin{align}\label{4.46}
F^{(i)}_{x_0}(Z, 0)=\Q^{(i)}_{x_0}(z_t,0)
+O(|Z|^{i+1}).
\end{align}
Thus in order to prove the Lemma it suffices to show that
\begin{align}\label{98}
\frac{\partial^{|\alpha|}}{\partial z^{\alpha}}F^{(i)}_{x_0}(0,0)=0,~~\textup{for all}~x_0\in X~\text{and}~|\alpha|\leqslant i\,.
\end{align}
We prove this by induction over $|\alpha|$.
The case $|\alpha|=0$ immediately follows from the fact
that $\Q^{(i)}_{x_0}(z_t, \bar{z}')$
is a homogeneous
polynomial of degree $i>0$.
For the induction step,
assume that
\cref{98} holds for $|\alpha|=i_0<i$ at all $x_0\in X$.
Together with \cref{4.46},
this implies in particular
that all derivatives in $x$ up to order $i_0$ of $F^{(i)}(x,y)$
vanish for $x=y$. On the other hand, we know by definition
\eqref{4.20} of $F^{(i)}$ that
\begin{equation}
\frac{\partial}{\partial z_{j}'}F^{(i)}_{x_0}(0,Z')=0,
~~\text{for all}~x_0\in X~\text{and}~|Z'|<\epsilon_0\,.
\end{equation}
Let us write $j_{\Delta}: \R^{2n}\rightarrow \R^{2n}\times\R^{2n}$
for the diagonal injection.
Using the fact that $\til{F}^{(i)}(\psi_x(Z),\psi_x(Z'))=\til{F}^{(i)}_{x}(Z,Z')$ for
all $|Z|,|Z'|<\epsilon_0$ and $x\in X$ in the coordinate charts
$\psi_x:T_xX\to X$,
we then deduce from the induction hypothesis that
for any $1\leq j\leq n$ and $x\in X$,
\begin{align}\label{4.49}
\frac{\partial^{|\alpha|+1}
F^{(i)}_{x}}{\partial z^{\alpha}\partial z_j}(0,0)
=\left(\frac{\partial}{\partial z_j}j^{\ast}_{\Delta}
\frac{\partial^{|\alpha|}F^{(i)}_{x}}{\partial z^{\alpha}
}\right)(0)
-\frac{\partial^{|\alpha|}}
{\partial z^{\alpha}}
\frac{\partial F^{(i)}_{x}}
{\partial z_{j}'}(0,0)=0.
\end{align}
This gives \eqref{98}.
Now recall from \cref{dztdzbarbasis} that the set of vectors 
$\{\partial/\partial z_j, \partial/\partial\bar{z}_{j,t}\}_{1\leq j\leq n}$
form a basis of $\C^{2n}$, so that \cref{4.46} and \cref{98}
implies \cref{4.43}, and this is equivalent to \cref{4.18}.
The argument for \cref{4.44} is analogous,
setting $\til{F}^{(i)}(x,y):=(\Q^{(i)}(z_t,0))^*$ and
$F^{(i)}(x,y):=(\til{F}^{(i)}(y,x))^*$ in \eqref{4.20} and swapping their
roles in the rest of the argument.
This finishes the proof of \cref{t4.7}.
\end{proof}

\begin{lem}\label{t4.8}
We have $\Q^{(i)}_{x_0}(z_t, \bar{z}')=0$ for all $x_0\in X$ and $i>0$.
\end{lem}

\begin{proof}
For any $x_0\in X$, let us consider the operator
\begin{align}\label{4.52}
\frac{1}{\sqrt{p}}P_{p,t}\big(\nabla_v^{E_p}
T_{p,t}\big)  P_{p,0}
\ \ \textup{with}\ v\in \cC^{\infty}(X, TX_\C),
~v_{x_{0}}=\frac{\partial}{\partial z_{t,j}}.
\end{align}
By \cref{defcong} and \cref{Qxdef}, the operator \cref{4.52} admits an expansion as in \cref{criterionexp} except for the condition on the parity
of the polynomials,
with leading term at $x_0\in X$ equal to
\begin{align}
\tau^{E,-1}_{t,x_0}\Big(\frac{\partial \Q_{x_{0}}}{\partial z_{t,j}}\Big)(\sqrt{p}z_t, \sqrt{p}\bar{z}')
\cP_{x_{0}}(\sqrt{p}Z, \sqrt{p}Z').
\end{align}
On the other hand, note that the proofs of \cref{t4.7,t4.5,t4.6,t4.7}
did not use the parity condition on the polynomials in \cref{criterion},
so that \cref{t4.7} holds
for the operator \cref{4.52}. Following the notations above,
we thus get for $i>0$,
\begin{align}\label{4.54}
\frac{\partial\Q^{(i+1)}_{x_0}}{\partial z_{t,j}}(0, \bar{z}')=\Big(\frac{\partial \Q_{x_{0}}}{\partial z_{t,j}}\Big)^{(i)}(0, \bar{z}')=0.
\end{align}
Now \cref{4.18} tells us that the constant term of 
$(\frac{\partial}{\partial z_{j,t}}\Q_{x_{0}})(z_t, \bar{z}')$ vanishes,
so that \cref{4.54} holds as well for $i=0$.
%
Then repeating this reasoning with the adjoint of \cref{4.52},
we further get for any $i>0$ and $1\leq j\leq n$,
\begin{align}\label{4.55}
\frac{\partial\Q^{(i)}_{x_0}}{\partial \bar{z}'_{j}}(z_t, 0)=0.
\end{align}
By continuing this process, we show by induction that for all
$x_0\in X,\,i>0$ and
$\alpha\in \N^{n}$,
\begin{align}\label{4.56}
\frac{\partial^{|\alpha|}\Q^{(i)}_{x_0}}{\partial z^{\alpha}_t}(0, \bar{z}')
=\frac{\partial^{|\alpha|}\Q^{(i)}_{x_0}}{\partial \bar{z}'^{\alpha}}(z_t, 0)=0.
\end{align}
This proves \cref{t4.8} and \cref{4.17} holds true.
\end{proof}

\noindent
\cref{t4.8} finishes the proof of \cref{t4.3}.
\end{proof}

\begin{prop}\label{t4.9}
We have
\begin{align}\label{4.58}
T_{p,t}=P_{p,t}g_{0,t}P_{p,0}+O(p^{-1}),
\end{align}
i.e., relation \textup{\cref{4.8}} holds true in the
sense of \cref{genToepdef}.
\end{prop}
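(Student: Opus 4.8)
The plan is to match the leading symbol of $T_{p,t}$ with that of the Berezin--Toeplitz operator attached to $g_{0,t}$ and then to show the difference is $O(p^{-1})$ via a norm estimate that gains one extra power of $p^{-1/2}$ from the parity of the symbols. First I would note that, by the definition \cref{4.6} of $g_{0,t}$ together with \cref{t4.3}, the leading coefficient in the expansion \cref{criterionexp} of $T_{p,t}$ is the constant $\Q_{0,t,x_0}(Z,Z')=g_{0,t}(x_0)$. On the other hand, \cref{GBTasy} applied to $g=g_{0,t}\in\cinf(X,E_t\otimes E_0^*)$ shows that $P_{p,t}g_{0,t}P_{p,0}$ admits an expansion of the same form \cref{criterionexp}, with polynomials of the same parity as $r$ and with leading coefficient $\Q_{0,t,x_0}(g_{0,t})=g_{0,t}(x_0)$ by \cref{GBTcoeff}. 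Setting
\begin{equation*}
\cT_p:=T_{p,t}-P_{p,t}g_{0,t}P_{p,0}=P_{p,t}\cT_pP_{p,0},
\end{equation*}
I then have a family of operators from $\HH_{p,0}$ to $\HH_{p,t}$, smooth in $t$, admitting an expansion as in \cref{criterionexp} with polynomial coefficients $\Q'_{r,t,x_0}$ of the same parity as $r$ and with $\Q'_{0,t,x_0}\equiv 0$.

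The vanishing of the order-$0$ coefficient gives, via the Schur-type estimate used in the proof of \cref{t4.5} (which relies only on that vanishing, on the Gaussian decay of $\PP_{t,x_0}\PP_0$ from \cref{PtP0fla}, and on the off-diagonal decay built into \cref{notcong}), the bounds $\|\cT_p\|_{p,0,t}=O(p^{-1/2})$ and $\|\cT^*_p\|_{p,0,t}=O(p^{-1/2})$. Hence $\sqrt{p}\,\cT_p$ is a uniformly bounded family of $P_{p,t}(\,\cdot\,)P_{p,0}$-sandwiched operators, smooth in $t$, satisfying an expansion of the form \cref{criterionexp} whose leading coefficient is $\Q'_{1,t,x_0}$ --- but now the parity condition of \cref{criterion} fails. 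This is precisely the point at which it matters that, as remarked in the proof of \cref{t4.8}, the argument of \cref{t4.3} never uses the parity of the polynomials: applying that argument to $\sqrt{p}\,\cT_p$ I obtain $\Q'_{1,t,x_0}(Z,Z')=\Q'_{1,t,x_0}(0,0)$ for all $Z,Z'\in\R^{2n}$. Since $\Q'_{1,t,x_0}$ is an odd polynomial, $\Q'_{1,t,x_0}(0,0)=0$, hence $\Q'_{1,t,x_0}\equiv 0$.

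Therefore the expansion of $\cT_p$ in fact starts at order $p^{-1}$, that is $p^{-n}\cT_{p,x_0}(Z,Z')\cong\sum_{r\geq 2}\Q'_{r,t,x_0}\PP_{t,x_0}\PP_0(\sqrt{p}Z,\sqrt{p}Z')p^{-r/2}+\cO(p^{-\infty})$; feeding this back into the same Schur-type estimate, now with leading term of order $p^{-1}$, gives $\|\cT_p\|_{p,0,t}=O(p^{-1})$, which is \cref{4.58} in the sense of \cref{genToepdef}. The step I expect to require the most care is re-running the argument of \cref{t4.3} for $\sqrt{p}\,\cT_p$: one must check that the proofs of its constituent lemmas --- in particular the use in \cref{t4.7} of the transversality of $V_0^{(1,0)}$ and $V_t^{(0,1)}$ in \cref{splitt0} and of \cref{PtFPtP0} --- go through for a merely uniformly bounded sandwiched family with an unconstrained leading polynomial, which is the case since none of those inputs ever used parity.
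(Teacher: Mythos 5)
Your proof is correct and follows essentially the same route as the paper: set $\cT_p = T_{p,t}-P_{p,t}g_{0,t}P_{p,0}$, note that $\sqrt{p}\,\cT_p$ satisfies all hypotheses of \cref{criterion} except the parity condition, apply the parity-free versions of Lemmas~\cref{t4.5}--\cref{t4.8} (as the paper itself remarks in the proof of \cref{t4.8}) to conclude that the leading polynomial coefficient $\Q'_{1,t,x_0}$ is constant, and then invoke oddness to kill that constant. The only cosmetic difference is that you spell out the two-step Schur bound ($O(p^{-1/2})$, then $O(p^{-1})$ after the order-$1$ term also vanishes), whereas the paper compresses this into a reference to the analogous argument in \cite[Prop.~4.17]{MM08b}.
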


\begin{proof}
Comparing the asymptotic expansions \cref{criterionexp} and
\cref{GBTasyfla} of both sides of \cref{4.58} up to
order $2$, as in the proof of the analogous result in 
\cite[Prop.4.17]{MM08b}, it suffices to prove that
for any $x_0\in X$ and $Z,Z'\in\R^{2n}$,
\begin{align}\label{4.63}
(\Q_{1,t,x_0}-\Q_{1,t,x_0}(g_{0}))(Z,Z')=0.
\end{align}
The left hand side of \cref{4.63} is the polynomial associated
with the first 
coefficient of the expansion as in \cref{criterionexp} of
\begin{equation}\label{V2.6}
\sqrt{p}(T_{p,t}-P_{p,t}g_{0,t}P_{p}).
\end{equation}
As before, we see that this operator satisfies
the hypotheses of \cref{t4.7,t4.5,t4.6,t4.7,t4.8}, so that
all the homogeneous components of degree $i>0$ of
$\Q_{1,t, x_0}-\Q_{1,t, x_0}(g_{0})$ vanish.
Furthermore, $\Q_{1,t, x_0}-Q_{1,t, x_0}(g_{0})$ is an odd polynomial,
so that in particular
its constant term vanishes as well. This shows \cref{4.63}.
\end{proof}

By \cref{4.58}, the operator $p(T_{p,t}-P_{p,t}g_{0,t}P_{p,0})$ satisfies the hypotheses of \cref{criterion},
so that \cref{t4.3} and \cref{t4.9} applied to
$p(T_{p,t}-P_{p,t}g_{0,t}P_{p,0})$ yield
$g_{1,t}\in \cC^{\infty}(X, E_t\otimes E_0^*)$ such that
\begin{equation}
T_{p,t}=P_{p,t}g_{0,t}P_{p,0}+P_{p,t}g_{1,t}P_{p,0}+O(p^{-2})
\end{equation}
We can then continue this process to get
\cref{genToepdef} for
any $q\in\N$ by induction.
This completes the proof of \cref{criterion}.

\subsection{Parallel transport as a Toeplitz operator}\label{pt}

In this section, we show that the parallel transport
in the quantum bundle of \cref{defsec} satisfies the hypotheses
of \cref{criterion}, and we compute the first coefficient of
its asymptotic expansion \cref{genToepdef}
in terms of the local data of \cref{locmod}.

We first work in the setting of \cref{famberg}, so that $\pi: M\fl B$
is a prequantized fibration equipped with a relative compatible complex
structure $J\in\End(TX)$ and an auxiliary vector bundle
$(E,h^E,\nabla^E)$ over $M$.
We assume $B$ compact and fix $p_0\in\N$
in \cref{specdeltapphi} with $U=B$.
Recall the $L^2$-connection
$\nabla^{\HH_p}$ on $\HH_p$ of \cref{connectionL2},
for any $p\in\N^*$.

\begin{defi}\label{connectiontoep}
A family of connections
$\{\nabla^p\}_{p\in\N^*}$ on the quantum bundle
$\{\HH_p\}_{p\in\N^*}$ over $B$ is called a
\emph{Toeplitz connection} if it is of the form
\begin{equation}\label{connToepdef}
\nabla^p=\nabla^{\HH_p}+K_p\,
\end{equation}
for any $p\in\N$, with
$\{K_p\in\cinf(B,\End(\HH_p)\otimes T^*B)\}_{p\in\N^*}$
such that there exists a family
$\{\sigma_l\in\cinf(M,\End(E)\otimes T^*M)\}_{l\in\N}$
smooth in $t\in [0,1]$, such that for all $k\geq 0$ and any
$v\in\cinf(B,TB)$, there exists $C_k>0$ such that
for all $b\in B$,
\begin{equation}\label{connToepexp}
\Big\|K_{p}(v)-\sum_{l=0}^{k-1} p^{-l}P_{p}\,\sigma_{l}(v^H)P_{p}
\Big\|_{p,b}
\leq C_k p^{-k}.
\end{equation}
\end{defi}

From now on, we fix a Toeplitz connection
$\{\nabla^p\}_{p\in\N^*}$
and a path $\gamma:[0,1]\rightarrow B$. Let
$p\in\N^*$, and
recall that $\partial_t$ denotes the canonical vector field on $[0,1]$.
The \emph{parallel transport} along $\gamma$ with respect to
$\{\nabla^p\}_{p\in\N^*}$ is the family of endomorphisms
\begin{equation}
\Tau_{p,t}:\HH_{p,\gamma(0)}\rightarrow\HH_{p,\gamma(t)}
\end{equation}
satisfying the following differential equation in $t\in[0,1]$
for any $s_0\in\HH_{p,\gamma(0)}$ and $p\in\N^*$,
\begin{equation}\label{ptfla}
\left\{
\begin{array}{l}
  \nabla^p_{\delt}\Tau_{p,t}s_0=0, \\
  \\
  \Tau_{p,0}s_0 = s_0.
\end{array}
\right.
\end{equation}
Pulling back the fibration by $\gamma:[0,1]\fl B$,
we can assume that $B=[0,1]$ and work in the
setting of \cref{trivfib},
%
where $(L,h^L,\nabla^L)$ is identified over
$[0,1]\times X$
with the pullback of a fixed Hermitian bundle
with connection over $X$. Then by \cref{trivnabL}, for any
$t\in[0,1]$,
\cref{connectionL2} becomes
\begin{equation}\label{nabHptriv}
\nabla^{\HH_p}_{\delt}=P_{p,t}\nabla^E_{\delt^H} P_{p,t}.
\end{equation}
Let $\sigma_0\in\cinf(M,\End(E)\otimes T^*M)$ be the first coefficient
of the Toeplitz connection $\{\nabla^p\}_{p\in\N^*}$
in \cref{connToepexp}, and define a connection
$\nabla^{E,\sigma}$ on $E$ over $M$ by
\begin{equation}\label{nutil}
\nabla^{E,\sigma}=\nabla^E+\sigma_0.
\end{equation}
For any $t\in[0,1]$, let $\tau^{E,\sigma}_t$ be the
parallel transport with respect to $\nabla^{E,\sigma}$ along
horizontal curves of $[0,1]\times X$.
For any $t\in[0,1]$, let $\mu_t\in\cinf(X,\C)$ be the function with
value at $x_0\in X$ given by \cref{tilmutdef} in the trivialization of \cref{trivfib} around $x_0\in X$. 
The following theorem is the central
result of this paper.
%

\begin{theorem}\label{Approx}
There exists a family
$\{\mu_{l,t}\in\cinf(X,E_t\otimes E_0^*)\}_{l\in\N}$,
smooth in $t\in [0,1]$, such that for all $k\geq 0$, there exists $C_k>0$ such that
\begin{equation}\label{Approxfla}
\Big\|\Tau_{p,t}-
\sum_{l=0}^{k-1} p^{-l}P_{p,t}\mu_{l,t}P_{p,0}\Big\|_{p,0,t}
\leq C_k p^{-k},
\end{equation}
for all $p\in\N^*$ and $t\in[0,1]$.
Furthermore, its first coefficient
$\mu_{0,t}\in\CC^\infty(X,E_{p,t}\otimes E_{p,0}^*)$
is given by
\begin{equation}\label{Taucoeff}
\mu_{0,t}=\mu_t\tau^{E,\sigma}_t.
\end{equation}
\end{theorem}
\begin{proof}
For any $g_t\in\cinf(X,E_t\otimes E_0^*)$, smooth in $t\in[0,1]$, and
for all $p\in\N^*$, let us consider the operator
\begin{equation}\label{nabPfP}
\nabla^p_{\delt}P_{p,t}g_tP_{p,0}:\HH_{p,0}\fl\HH_{p,t}.
\end{equation}
Then by \cref{mut}, \cref{GBTasyfla}, \cref{connToepdef} and
\cref{nabHptriv}, the Schwartz
kernel of \cref{nabPfP} satisfies the assumptions of \cref{criterion}.
Using \cref{nutil}, the first coefficient $Q_{0,t,x_0}$
of its expansion \cref{criterionexp}
for any $x_0\in X,\,t\in[0,1]$, is the constant polynomial equal to
\begin{equation}\label{eqft}
\left(\nabla^{E,\sigma}_{\delt^H} g_t\right)(x_0)-\frac{1}{4}\Tr
\left[\Pi_{t,x_0}^0\dt(-J_{0,x_0}J_{t,x_0})
\right]g_t(x_0),
\end{equation}
where $\Pi_{t,x_0}^0\in\End(T_{x_0}X_\C)$ is the projection to
$T^{(1,0)}_{x_0}X_t$ with kernel $T^{(0,1)}_{x_0}X_0$.
Let $\mu_{0,t}\in\cinf(X,E_t\otimes E_0^*)$ be the section satisfying the following ordinary differential
equation in $t\in[0,1]$,
\begin{equation}\label{eqft=0}
\left\{
\begin{array}{l}
  \nabla^{E,\sigma}_{\delt^H} \mu_{0,t}-\frac{1}{4}\Tr
\left[\Pi_{t}^0\dt(-J_{0}J_{t})
\right]\mu_{0,t}=0, \\
  \\
  \mu_{0,0}=\Id_{E_0}.
\end{array}
\right.
\end{equation}
Then he have $P_{p,0}\mu_{0,0}P_{p,0} = P_{p,0}$
for all $p\in\N^*$, and the estimate
$\nabla^p_{\delt}P_{p,t}\mu_{0,t}P_{p,0}=O(p^{-1})$
holds in operator norm as $p\fl +\infty$
by \cref{eqft}.
For any $k\in\N^*$, let us assume by induction that we have sections
$\mu_{l,t}\in\cinf(X,E_t\otimes E_0^*)$ with
$\mu_{l,0}\equiv 0$ for all $0<l\leq k-1$, satisfying
\begin{equation}\label{nabPgP=Op}
\nabla^p_{\delt}\sum_{l=0}^{k-1} p^{-l}
P_{p,t}\mu_{l,t}P_{p,0}=O(p^{-k}),
\end{equation}
in operator norm as $p\fl +\infty$.
Then \cref{criterion} applies to the left hand side of
\cref{nabPgP=Op} multiplied by $p^k$.
Let $g_k\in\cinf(X,E_t\otimes E_0^*)$ denote its first coefficient
in the expansion \cref{genToepdef}, and let 
$\mu_{k,t}\in\cinf(M,E_t\otimes E_0^*)$ be the section
satisfying the following ordinary differential equation in
$t\in[0,1]$,
\begin{equation}\label{edogk}
\left\{
\begin{array}{l}
  \nabla^{E,\sigma}_{\delt}
  \mu_{k,t}-\frac{1}{4}\Tr
\left[\Pi_{t}^0\dt(-J_{0}J_{t})
\right]\mu_{k,t}+g_k=0 \\
  \\
  \mu_{k,0}=0.
\end{array}
\right.
\end{equation}
Then using \cref{criterion} as above,
we have
$\nabla^p_{\delt}\sum_{l=1}^k p^{-l} P_{p,t}\mu_{l,t}
P_{p,0}=O(p^{-k-1})$
in operator norm as $p\fl +\infty$.
We construct this way a sequence
$\{\mu_{l,t}\in\CC^\infty(X,E_{p,t}\otimes E_{p,0}^*)\}_{l\in\N}$,
with $\mu_{0,0}\equiv\Id_{E}$ and $\mu_{l,0}\equiv 0$
for all $l\in\N^*$,
satisfying \cref{nabPgP=Op} for all $k\in\N^*$.
Then by \cref{ptfla} and \cref{nabPgP=Op}, we have
$\nabla^p_{\delt}\left(\Tau_{p,t}-\sum_{l=1}^{k-1} p^{-l}
P_{p,t}\mu_{l,t}P_{p,0}\right)=O(p^{-k})$
in operator norm as $p\fl +\infty$ for any $k\in\N^*$,
so that there exists $C_k>0$ such that
for any $p\in\N^*$, $s_0\in\cinf(X,E_{p,0})$ and $t\in[0,1]$,
\begin{equation}\label{Gronest}
\begin{split}
\dt\Big\|\Big(&\Tau_{p,t}-\sum_{l=1}^{k-1} p^{-l} P_{p,t}\mu_{l,t}
P_{p,0}\Big)s_0\Big\|_{p,t}^2\\
&=2\re\left\langle\nabla^p_{\delt}\Big(\Tau_p-\sum_{l=1}^{k-1}
p^{-l}P_p\mu_{l,t}P_{p,0}\Big)s_0,\Big(\Tau_p-\sum_{l=1}^{k-1} p^{-l}
P_{p,t}\mu_{l,t}P_{p,0}\Big)s_0\right\rangle_{p,t}\\
&\leq 2C_k p^{-k} \|s_0\|_{p,0} \norm{\Big(\Tau_p-\sum_{l=1}^k
p^{-l} P_{p,t}\mu_{l,t}P_{p,0}\Big)s_0}_{p,t}.
\end{split}
\end{equation}
As $\Tau_{p,0}=\sum_{l=1}^{k-1} p^{-l} P_{p,0}
\mu_{l,0}P_{p,0}=P_{p,0}$,
by \cref{Gronest} and using Grönwall's lemma, we conclude
that $\Tau_p$ satisfies \cref{genToepdef},
with first coefficient $\mu_{0,t}$
equal to the solution of the ordinary differential equation \cref{eqft},
which is precisely \cref{Taucoeff} by definition \cref{tilmutdef}
of $\mu_t$.
\end{proof}

By \cref{GBTasy}, \cref{Approx} implies in particular that
for any $x_0\in X$, there exists a family
$\{G_{r,x_0}(Z,Z')\in E_{1,x_0}\otimes E_{0,x_0}^*\}_{r\in\N}$
of polynomials in
$Z,Z'\in\R^{2n}$ of the same parity as $r$ and smooth in
$x_0\in X,\,t\in[0,1]$, such that
\begin{equation}\label{expTau}
\Tau_{p,t}(Z,Z')\cong p^n\sum_{r=0}^{\infty} G_{r,t,x_0}
\PP_{t,x_0}\PP_0(\sqrt{p}Z,\sqrt{p}Z')p^{-\frac{r}{2}}+\cO(p^{-\infty}),
\end{equation}
in the sense of \cref{notcong}, with $G_{0,t,x_0}(Z,Z')=\mu_t(x_0)\tau^{E,\sigma}_t(x_0)$ for all $Z,\,Z'\in\R^{2n}$.

\section{Localization formulas}\label{locsec}

In this section, we use the results of \cref{pt} to prove an
asymptotic version of localization formulas of Lefschetz type
for the action of symplectic maps, proving \cref{indeqintro}.
In \cref{isosec}, we deal with the case of isolated fixed point.
In \cref{galsec}, we then establish the general case.

Let $(X,\om)$ be a compact symplectic manifold, and
let $\varphi:X\fl X$ be a diffeomorphism.
\begin{defi}\label{nondegdef}
The fixed point set $X^\varphi\subset X$ of a diffeomorphism $\varphi:X\fl X$
is said to be \emph{non degenerate} if $X^\varphi$ is a closed submanifold
such that $TX^\varphi=\Ker(\Id_{TX}-d\varphi)$ inside $TX$.
\end{defi}

Assume that $(X,\om)$ is equipped with a Hermitian line bundle
$(L,h^L)$ together with a
Hermitian connection
$\nabla^L$
whose curvature satisfies the prequantization condition
\cref{preq}, and that
$\varphi:X\fl X$ lifts
to a bundle map $\varphi^L:L\fl L$ preserving metric and connection,
so that in particular $\varphi$ preserves $\om$.
If $J_0$ is an almost complex structure on $X$ compatible with $\om$,
then the almost complex structure $J_1$ defined by
\begin{equation}\label{J_1}
J_1=d\varphi J_0d\varphi^{-1}
\end{equation}
is again compatible with $\om$. As the space of almost
complex structures compatible with $\om$ is contractible, there exists
a path $t\mapsto J_t$ joining $J_0$ to $J_1$ for $t\in[0,1]$, and
we can consider
the associated tautological fibration over $[0,1]$ as in \cref{trivfib}.
If $(E,h^E,\nabla^E)$ is an auxiliary Hermitian vector bundle with 
Hermitian connection over
$[0,1]\times X$, we
suppose that $\varphi$ lifts to a bundle map $\varphi^E:E_0\fl E_1$
over $X$, preserving metric and connection, and we write
$\varphi_p$ for the induced map on $E_p$ for any $p\in\N^*$.
The pullback of $s\in\cinf(X,E_p)$ by $\varphi_p$, defined for any
$x\in X$ by
\begin{equation}\label{pullbackphip}
({\varphi}_p^*s)(x)=\varphi_p^{-1}.s(\varphi(x)).
\end{equation}
induces by restriction a linear map $\varphi^*_p:\HH_{p,1}\fl\HH_{p,0}$
from the quantum space associated with 
$J_1$ to the one associated with $J_0$.

We omit in the sequel the subscript $1$ for any object depending on $t$
evaluated at $t=1$, and consider the local endomorphisms of
\cref{locmod} as functions
of $x_0\in X$ in the trivialization of \cref{trivfib}.
In particular, we write $\mu\in\cinf(X,\C)$ for the function
whose value at $x_0\in X$ is equal to $\mu_1$ in \cref{tilmutdef}.

In this section, the notation $O(p^{-k})$ is meant in its usual
sense as $p$ tends to infinity, uniformly in $x\in X$ and $t\in[0,1]$.
The notation $O(p^{-\infty})$ means $O(p^{-k})$ for all $k\in\N$.

\subsection{Isolated fixed points}\label{isosec}

We first deal with the case of $\varphi:X\fl X$ having only
non-degenerate isolated fixed points.
Recall the parallel transport operator $\{\Tau_{p,t}\}_{p\in\N^*}$
from $\HH_{p,0}$ to $\HH_{p,t}$ with respect to a Toeplitz connection
$\{\nabla^p\}_{p\in\N}$ defined by \cref{ptfla} for all $t\in[0,1]$.


For any $x\in X$ such that $\varphi(x)=x$,
write $\lambda_x:={\varphi}^{L,-1}_x\in\C$
through the canonical
identification $\End(L)\simeq\C$.
Recall the convention for square roots of complex determinants
stated at the end of \cref{setting}.
The following theorem is the main result of this section.

\begin{theorem}\label{asymplef}
Suppose that the fixed point set $X^\varphi\subset X$
of $\varphi:X\fl X$ is discrete and non-degenerate, and write
$X^{\varphi}=\{x_1,\dots,x_q\},\,q\in\N^*$.
Then for each $1\leq j\leq q$ and $r\in\N$,
there exists $a_{j,r}\in\C$,
such that for any $k\in\N$ and as $p\in\N^*$ tends to infinity,
\begin{equation}\label{leffle}
\Tr_{\HH_p}[\varphi^*_p\Tau_p]=\sum_{j=1}^q \lambda_{x_j}^p\sum_{r=0}^{k-1} p^{-r} a_{j,r} +O(p^{-k}).
\end{equation}
Furthermore, for any $1\leq j\leq q$, the following formula holds,
\begin{equation}\label{aj0lef}
a_{j,0}=\bar\mu^{-1}(x_j)\Tr_E[\varphi^{E,-1}_{x_j}
\tau^{E,\sigma}_{x_j}]
\det{}^{-\frac{1}{2}}\left[(\Pi_{0,x_j}^1-d\varphi_{x_j}^{-1}\overline{\Pi^0_1}{}_{,x_j})(\Id_{T_{x_j}X}-d\varphi_{x_j})\right].
\end{equation}
\end{theorem}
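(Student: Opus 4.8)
The plan is to compute the trace through the Schwartz kernel of $\til\varphi^*_p\Tau_p$ and to localize it around the fixed points. Writing $\Tau_p(\cdot,\cdot)$ for the Schwartz kernel of $\Tau_{p,1}$ with respect to $dv_X$, formula \cref{pullbackphip} shows that $\til\varphi^*_p\Tau_p$ has kernel $(x,y)\mapsto\til\varphi_{p,x}^{-1}\Tau_p(\varphi(x),y)$, so that $\Tr_{\HH_p}[\til\varphi^*_p\Tau_p]=\int_X\Tr_{E_{p,0}}[\til\varphi_{p,x}^{-1}\Tau_p(\varphi(x),x)]\,dv_X(x)$. By \cref{Approx}, $\{\Tau_{p,t}\}_{p\in\N^*}$ is a Toeplitz operator, so its kernel enjoys the off-diagonal decay built into \cref{notcong}, exactly as in \cref{theta}. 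Since $X^\varphi=\{x_1,\dots,x_q\}$ is discrete and non-degenerate, the function $x\mapsto d(\varphi(x),x)$ is bounded away from $0$ outside arbitrarily small disjoint balls $B(x_j,\epsilon)$, and hence up to $O(p^{-\infty})$ the integral reduces to a finite sum of integrals over these balls.

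Next I would fix $j$ and work in the trivialization of \cref{trivfib} around $x_j$. Writing $x=\psi_{x_j}(Z)$, the map $\varphi$ reads in the vertical chart as $Z\mapsto\varphi_{x_j}(Z)=d\varphi_{x_j}Z+O(|Z|^2)$, where $\Id-d\varphi_{x_j}$ is invertible by non-degeneracy. Inserting the near-diagonal expansion \cref{expTau} of $\Tau_{p,1}$ and rescaling $Z=W/\sqrt p$, the Jacobian $p^{-n}$ cancels the factor $p^n$ of \cref{expTau}, the factor $\kappa_{x_j}(W/\sqrt p)$ tends to $1$, the $L^p$-component of $\til\varphi_{p,\psi_{x_j}(W/\sqrt p)}^{-1}$ equals $\lambda_{x_j}^p$ up to $O(|W|^3p^{-1/2})$ — because $\til\varphi^L$ preserves $\nabla^L$, whose curvature is the constant $-2\pi\sqrt{-1}\Om$ in the radial gauge, so the holonomy discrepancy is of cubic order in $Z$ — and the phase $\exp(-\pi\sqrt{-1}p\,\Om(\varphi_{x_j}(Z),Z))$ occurring in $\PP_{1,x_j}\PP_0$ converges to $\exp(-\pi\sqrt{-1}\Om(d\varphi_{x_j}W,W))$. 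Collecting powers of $p^{-1/2}$, the coefficient of $p^{-m/2}$ in the $j$-th contribution is a finite integral $\int_{\R^{2n}}R_m(W)\,\PP_{1,x_j}\PP_0(d\varphi_{x_j}W,W)\,dv(W)$ with $R_m$ a polynomial of parity $m$; since $\PP_{1,x_j}\PP_0(d\varphi_{x_j}W,W)$ is even under $W\mapsto-W$, the odd-$m$ terms vanish, which yields \cref{leffle} with $a_{j,r}$ the integral for $m=2r$, the remainder $O(p^{-k})$ following from the standard truncation of \cref{expTau} and of the Taylor expansions together with Gaussian tail estimates.

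It then remains to identify $a_{j,0}$. By \cref{expTau} the leading coefficient is $G_{0,x_j}=\mu(x_j)\til\tau^{E,\sigma}_{x_j}$, so
\[
a_{j,0}=\mu(x_j)\,\Tr_E[\til\varphi^{E,-1}_{x_j}\til\tau^{E,\sigma}_{x_j}]\int_{\R^{2n}}\PP_{1,x_j}\PP_0(d\varphi_{x_j}W,W)\,dv(W).
\]
I would evaluate this Gaussian integral from the explicit formula \cref{PtP0fla} and the convention for square roots of complex determinants fixed at the end of \cref{setting}: after the change of variable $V=(\Id-d\varphi_{x_j})W$ and the identity $\Om(d\varphi_{x_j}W,W)=\Om((d\varphi_{x_j}-\Id)W,W)$, the integral equals $\det(A^0_{1,x_j})^{1/2}\det{}^{-1/2}[\,\cdot\,]$ for an explicit complex endomorphism with positive symmetric real part. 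Using \cref{barmut} in the form $\mu(x_j)\det(A^0_{1,x_j})^{1/2}=\bar\mu^{-1}(x_j)$ (since $\PP_1\PP_0(0,0)=\det(A^0_1)^{1/2}$), and simplifying the determinant with the relation $\Pi^1_0+\overline{\Pi^0_1}=\Id$ from \cref{Pi0t'}, the symplecticity of $d\varphi_{x_j}$ and $\det d\varphi_{x_j}=1$, one rewrites it as $\det{}^{-1/2}[(\Pi^1_{0,x_j}-d\varphi_{x_j}^{-1}\overline{\Pi^0_1}{}_{,x_j})(\Id_{T_{x_j}X}-d\varphi_{x_j})]$, which is \cref{aj0lef}.

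The routine but delicate points are the uniform control of the localized integral — the expansion \cref{expTau} is valid only on neighbourhoods of the diagonal shrinking like $p^{-\theta/2}$, so the cut-off at $|W|\lesssim p^{(1-\theta)/2}$ and all error terms must be handled with the uniform constants of \cref{notcong} — and, above all, the linear algebra of the last paragraph: keeping track of the correct branch of $\det{}^{-1/2}$, equivalently of the Lefschetz sign $\mathrm{sgn}\,\det(\Id_{T_{x_j}X}-d\varphi_{x_j})$, which is hidden in the square-root convention. This last identity is where I expect the bulk of the work, and it is the main obstacle.
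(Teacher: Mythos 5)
Your proposal is correct and follows essentially the same route as the paper: localization via the off-diagonal decay of the Toeplitz kernel and the non-degeneracy bound, rescaling of the near-diagonal expansion \cref{expTau} with the cubic control of the $L$-lift in the radial gauge, the parity argument to eliminate half-integer powers, and the evaluation of the leading Gaussian integral using \cref{barmut} and the identities $\Pi_0^1+\overline{\Pi_1^0}=\Id$ and symplecticity of $d\varphi$. The only cosmetic difference is that the paper rewrites the quadratic form in the exponent directly (its \cref{computP1P0phi}) rather than changing variables by $\Id-d\varphi_{x_j}$, but this leads to the same determinant.
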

\begin{proof}

Recall that $\Tau_p$ admits a smooth Schwartz kernel with respect to
$dv_X$ for all $p\in\N^*$ and $t\in[0,1]$, so that in particular,
\begin{equation}\label{tr1}
\Tr_{\HH_p}[\varphi^*_p\Tau_p]=\int_X\Tr_{E_p}\left[{\varphi}^{-1}_p.\Tau_p(\varphi(x),x)\right] dv_X(x).
\end{equation} 
Let $\psi:TX\fl [0,1]\times X$
be a smooth family of vertical charts
constant along $[0,1]$ as in \cref{trivfib}, and let $\epsilon_0>0$ be 
such that $\psi$ restricted to $B^{T_xX}(0,\epsilon_0)$ is a
diffeomorphism on its image for any $x\in X$.
For all $x\in X$ and $0<\epsilon<\epsilon_0$, we write
$U_x(\epsilon):=\psi(B^{T_xX}(0,\epsilon))$ and we identify
$Z\in B^{T_xX}(0,\epsilon)$ with its image in
$U_{x_j}(\epsilon_0)$.

Let $\eta\in\R$ be the modulus of the smallest eigenvalue of
$\Id_{T_{x_j}X}-d\varphi_{x_j}\in\End(T_{x_j}X)$ for any $1\leq j\leq q$.
Then by \cref{nondegdef}, we know that $|\eta|>0$. Let us now consider
$\epsilon>0$ small enough so that
$\varphi(U_{x_j}(\epsilon))\subset U_{x_j}(\epsilon_0)$, for all
$1\leq j\leq q$. Then taking the Taylor expansion of $\varphi$,
we get $C_j>0$ such that for any $p\in\N^*,\,\theta\in[0,1]$
and all $Z\in B^{T_{x_j}X}(0,\epsilon)\simeq U_{x_j}(\epsilon)$ outside
$U_{x_j}(\epsilon p^{-\frac{\theta}{2}})$, we have
\begin{equation}\label{borneinf}
\begin{split}
|Z-\varphi(Z)| & \geq |(\Id_{T_{x_j}X}-d\varphi_{x_j})Z|-C|Z|^2\\
& \geq (\eta-C\epsilon p^{-\frac{\theta}{2}})\epsilon p^{-\frac{\theta}{2}}.
\end{split}
\end{equation}
On the other hand, by \cref{Approx}, we know that \cref{thetacong} holds
for $\Tau_p$, and we deduce from \cref{tr1}-\cref{borneinf} that there exists
$\epsilon>0$ such that for all $p\in\N^*$,
\begin{equation}\label{tr2}
\Tr_{\HH_p}[\varphi^*_p\Tau_p]=\sum_{j=1}^q
\int_{U_{x_j}(\epsilon p^{-\frac{\theta}{2}})}\Tr_{E_p}\left[{\varphi}^{-1}_p.\Tau_p(\varphi(x),x)\right] dv_{X}(x) +O(p^{-\infty}).
\end{equation}
%
Estimating \cref{tr2} term by term, we assume from now on that $\varphi$ has
only one fixed point, which we denote $x_0\in X$. Consider the 
trivialization around $x_0$ by parallel transport with respect to
$\nabla^L,\,\nabla^E$ along radial lines as in \cref{trivfib},
and identify $T_{x_0}X$ with $\R^{2n}$ by \cref{basis}.
Given any $s\in\cinf(U_{x_0}(\epsilon_0),L)$,
there exists a
smooth function $\lambda_{x_0}\in\cinf(B^{\R^{2n}}(0,\epsilon),\C)$,
defined for all $Z\in B^{\R^{2n}}(0,\epsilon)$ in these
coordinates by the formula
\begin{equation}\label{phiL=alpha}
\lambda_{x_0}(Z)s(Z)=({\varphi}^{L,*}s)(Z):={\varphi}^{L,-1}_Z.s(\varphi(Z)).
\end{equation}
In particular, as $\varphi(x_0)=x_0$, the unitary endomorphism
${\varphi}^{L,-1}_{x_0}$ acts on $L_{x_0}$ by multiplication by
$\lambda:=\lambda_{x_0}(0)\in\C$ with $|\lambda|=1$.

Recall that for any $v\in \cinf(X,TX)$ and $s\in\cinf(X,L)$,
we have by assumption,
%
%
\begin{equation}\label{varphinab}
\nabla^L_v({\varphi}^{L,*}s)={\varphi}^{L,*}(\nabla^L_vs).
\end{equation}
Let $Z:=\sum_{j=1}^{2n}Z_j\frac{\partial}{\partial Z_j}\in
\cinf(B^{\R^{2n}}(0,\epsilon),T_{x_0}X)$ be the \emph{radial vector field}
of the coordinates above.
It is a classical result,
which can be found for example in \cite[(1.2.31)]{MM07}, that in the trivialization of $L$ along radial lines,
the connection $\nabla^L$ at $Z\in B^{\R^{2n}}(0,\epsilon)$ has the form
\begin{equation}\label{nablatriv}
\nabla^L=d+\frac{1}{2}R^L(Z,.)+O(|Z|^2).
\end{equation}
Let us take $s\in\cinf(U_{x_0}(\epsilon_0),L)$ in \cref{phiL=alpha} to be
$1\in\C$ in our trivialization.
Then $s$ is parallel along radial lines, and the right hand side of
\cref{varphinab} vanishes for $v=Z$. Thus using \cref{phiL=alpha} and
\cref{nablatriv}, equation \cref{varphinab} becomes 
\begin{equation}\label{eqalpha}
\begin{split}
Z_j\frac{\partial}{\partial Z_j}\lambda_{x_0}(Z) &
=-\frac{1}{2}R^L(Z,Z)\lambda_{x_0}(Z)+O(|Z|^3)\lambda_{x_0}(Z)\\
& =O(|Z|^3)\lambda_{x_0}(Z).
\end{split}
\end{equation}
Solving the ordinary differential equation \cref{eqalpha},
we get a function $\zeta\in\cinf(B^{\R^{2n}}(0,\epsilon),\C)$
such that for all $Z\in B^{\R^{2n}}(0,\epsilon)$,
\begin{equation}\label{valalphaphi}
\lambda_{x_0}(Z)=\lambda e^{\zeta(Z)}\quad\text{with}\quad \zeta(Z)=O(|Z|^3)\,.
\end{equation}
Let $\{G_{r,x_0}(Z,Z')\in E_{1,x_0}\otimes E_{0,x_0}^*\}_{r\in\N}$ be the sequence
of polynomials in $Z,Z'\in\R^{2n}$ associated with
$\{\Tau_p\}_{p\in\N^*}$
in \cref{expTau}. By \cref{tr2},
\cref{phiL=alpha}
and \cref{valalphaphi}, for any $\delta\in\,]0,1[$ and $k\in\N$,
we get
a $\theta\in\,]0,1[$ such that for all
$p\in\N^*$,
\begin{multline}\label{trkerTau}
\Tr_{\HH_p}[\varphi^*_p\Tau_p]=p^n\lambda^p\sum_{r=0}^{k-1} p^{-\frac{r}{2}}
\int_{B^{\R^{2n}}(0,\epsilon p^{-\frac{\theta}{2}})}\Tr_{E}[{\varphi}^{E,-1}_Z G_{r,x_0}
\PP_{1,x_0}\PP_0(\sqrt{p}\varphi(Z),\sqrt{p}Z)]\\
e^{p\zeta(Z)}
dv_{X}(Z)+O(p^{-\frac{k}{2}+\delta}).
\end{multline}
Recall \cref{PtP0fla}. Considering the Taylor expansion up to second order
of $\varphi$, we know there exist smooth functions
$h_\alpha\in\cinf(B^{\R^{2n}}(0,\epsilon),\C)$ for all $\alpha\in\N^3$
such that for any $Z\in B^{\R^{2n}}(0,\epsilon)$,
\begin{multline}\label{Taylorphi3}
\frac{\pi}{2}\left(\<\Pi_{0,x_0}^1(\varphi(Z)-Z),(\varphi(Z)-Z)\>+2\sqrt{-1}\Omega(\varphi(Z),Z)\right)\\
=\frac{\pi}{2}\left(\<\Pi_{0,x_0}^1(d\varphi_{x_0}.Z-Z),(d\varphi_{x_0}.Z-Z)\>+2\sqrt{-1}\Omega(d\varphi_{x_0}.Z,Z)\right)\\
+\sum_{|\alpha|=3}Z^\alpha h_\alpha(Z).
\end{multline}
Using \cref{PPreal} and \cref{Taylorphi3}, we then get for any $Z\in B^{\R^{2n}}(0,\epsilon),\,k,p\in\N^*$,
\begin{multline}\label{TaylorPP}
\begin{split}
\PP_{1,x_0}\PP_0(\sqrt{p}&\varphi(Z),\sqrt{p}Z)\\
=\PP_{1,x_0}\PP_0&(\sqrt{p}d\varphi_{x_0}.Z,\sqrt{p}Z)\exp\left(-p\sum_{|\alpha|=3}Z^\alpha h_\alpha(Z)\right)\\
=\PP_{1,x_0}\PP_0&(\sqrt{p}d\varphi_{x_0}.Z,\sqrt{p}Z)\\
\Big(&\prod_{|\alpha|=3}\sum_{r=0}^{k-1} (-1)^r\frac{p^{-\frac{r}{2}}}{r!}(\sqrt{p}Z)^{r\alpha}h_\alpha(Z)^r+p^{-\frac{k}{2}}O(|\sqrt{p}Z|^k)\Big).
\end{split}
\end{multline}
Now for any multi-index $\alpha$ of length $3$ and any $r\in\N$, we can
consider Taylor expansion up to order $l\in\N$ of $h_\alpha^r$ as in
\cref{4.37} to get
%
%
$a_\beta\in\C$ for all $\beta\in\N^{2n}$
such that for any $Z\in B^{\R^{2n}}(0,\epsilon),\,k\in\N$
and $p\in\N^*$,
\begin{multline}
\label{PPphi=PPdphi}
\PP_{1,x_0}\PP_0(\sqrt{p}\varphi(Z),\sqrt{p}Z)=
\PP_{1,x_0}\PP_0(\sqrt{p}d\varphi_{x_0}.Z,\sqrt{p}Z)\\
\left(1+\sum_{j=1}^{k-1} p^{-\frac{j}{2}} \sum_{|\beta|=j} (\sqrt{p}Z)^{\beta}a_{\beta}
+p^{-\frac{k}{2}}O(|\sqrt{p}Z|^k)\right).
\end{multline}
Considering again the Taylor expansion of $\varphi$,
for any polynomial $G_{r,x_0}(Z,Z')$ in \cref{tr2},
we get a sequence $\{F_{r,j}(Z)\in E_{1,x_0}\otimes E_{0,x_0}^*\}_{j\in\N}$
of polynomials in $Z\in\R^{2n}$ of the parity of $r+j$
and some $d_k\in\N$, such that
\begin{equation}\label{TaylorHrphi}
G_{r,x_0}(\sqrt{p}\varphi^{-1}(Z),\sqrt{p}Z)=\sum_{j=0}^{k-1} p^{-\frac{j}{2}}
F_{r,j}(\sqrt{p}Z)+p^{-\frac{k}{2}}O(|\sqrt{p}Z|^{d_k}).
\end{equation}
Furthermore, by \cref{Taucoeff} we get that
$F_{0,0}(Z)=\mu(x_0)\tau^{E,\sigma}_{x_0}$ for any
$Z\in\R^{2n}$.
From \cref{PtP0fla} and \cref{borneinf}, we see that
the function $\PP_{1,x_0}\PP_0(d\varphi_{x_0}.Z,Z)$ is integrable over
$\R^{2n}$ and exponentially decreasing in $Z\in\R^{2n}$.
For any $k\in\N$, let $M_k\in\N$ be the sum of the degrees of
all polynomials involved in \cref{PPphi=PPdphi} and
\cref{TaylorHrphi}, and write
\begin{equation}
\delta'=\delta+(M_k+k+1+d_k)(1-\theta)/2\,.
\end{equation}
Then considering the Taylor expansion of
$\varphi^{E,-1}$ in
\cref{trkerTau}, 
applying the reasonning of \cref{TaylorPP}-\cref{PPphi=PPdphi} to
$e^{p\zeta(Z)}$ with $\zeta(Z)=O(|Z|^3)$ as in \eqref{valalphaphi}
and using \cref{PPphi=PPdphi}-\cref{TaylorHrphi},
we deduce the existence of a sequence
$\{H_{r,x_0}(Z)\in E_{1,x_0}\otimes E_{0,x_0}^*\}_{r\in\N}$ of
polynomials in $Z\in\R^{2n}$, of the
parity of $r$, smooth in $x_0$ and with
$H_{0,x_0}(Z)=\mu(x_0)\tau^{E,\sigma}_{x_0}$ for any
$Z\in\R^{2n}$,
such that via the change of variable $Z\mapsto\sqrt{p}Z$,
equation \cref{trkerTau} becomes
\begin{multline}\label{trkertau2}
\Tr_{\HH_p}[\varphi^*_p\Tau_p]=\lambda^p\sum_{r=0}^{k-1} p^{-\frac{r}{2}}\\
\int_{\R^{2n}}\Tr_{E}[{\varphi}^{E,-1}_{x_0}H_{r,x_0}\PP_{1,x_0}
\PP_0(\sqrt{p}d\varphi_{x_0}.Z,\sqrt{p}Z)] dZ
+O(p^{-\frac{k}{2}+\delta'}).
\end{multline}
Now by \cref{PtP0fla}, we see that
$\PP_{1,x_0}\PP_0(\sqrt{p}d\varphi_{x_0}.Z,\sqrt{p}Z)$ is even in
$Z\in\R^{2n}$, and as the polynomial $H_{2r+1}$ is odd for any $r\in\N$,
we get
\begin{equation}\label{odd}
\int_{\R^{2n}}\Tr_E[{\varphi}^{E,-1}_{x_0}H_{2r+1,t}\PP_{1,x_0}
\PP_0(\sqrt{p}d\varphi_{x_0}.Z,\sqrt{p}Z)] dZ=0.
\end{equation}
Thus from \cref{trkertau2} and \cref{odd}, we get
$b_r\in\C$ for all $r\in\N$ such that
\begin{equation}\label{trkertau3}
\Tr_{\HH_p}[\varphi^*_p\Tau_p]=\sum_{r=0}^{k-1} p^{-r}\lambda^p b_{r}
+O(p^{-\frac{k}{2}+\delta}).
\end{equation}
Using \cref{tr2} and \cref{trkertau3}, we can extend the above reasonning
to the case of $m$ fixed point to get \cref{leffle} in general.

Let us now compute 
\begin{equation}\label{M0}
b_{0,x_0}=\int_{\R^{2n}}\Tr_E[{\varphi}^{E,-1}_{x_0}\tau^{E,\sigma}_{x_0}]
\mu(x_0)\PP_{1,x_0}\PP_0(d\varphi_{x_0}.Z,Z)dZ.
\end{equation}
Using \cref{At0}-\cref{splitt0}, we know that
$\Pi_0^1+\overline{\Pi_1^0}=\Id_{\R^{2n}}$ and that
$\Om(\Pi_0^1\cdot,\cdot)=\Om(\cdot,\overline{\Pi_1^0}\cdot)$.
From \eqref{Pi0t'}, \cref{PtP0fla}, using $\Om(\cdot,\cdot)=\Om(d\varphi_{x_0}\cdot,d\varphi_{x_0}\cdot)$ and the identities above, for any $Z,Z'\in\R^{2n}$ we get
\begin{equation}\label{computP1P0phi}
\begin{split}
&\det(A_1^0)^{-\frac{1}{2}}\PP_{1,x_0}\PP_0(d\varphi_{x_0}.Z,Z)\\
&=\exp\left[-\pi\left(\left\langle\Pi_{0,x_0}^1(d\varphi_{x_0}.Z-Z),
(d\varphi_{x_0}.Z-Z)\right\rangle+\i\Om(d\varphi_{x_0}.Z,Z)\right)\right]\\
&=\exp\left[\i\pi\left(\Om((\Pi_{0,x_0}^1d\varphi_{x_0}-\Pi_{0,x_0}^1).Z,
d\varphi_{x_0}.Z-Z)+\Om(Z,d\varphi_{x_0}.Z-Z)\right)\right]\\
&=\exp\left[\i\pi\left(\Om(Z,(d\varphi_{x_0}^{-1}
\overline{\Pi_1^0}{}_{{}_{,\mathlarger{x_0}}}+\Pi_{0,x_0}^1)(d\varphi_{x_0}.Z-Z))\right)\right]\\
&=\exp\left[-\pi\left\langle(\Pi_{0,x_0}^1-
d\varphi_{x_0}^{-1}\overline{\Pi_1^0}{}_{{}_{,\mathlarger{x_0}}})(\Id_{\R^{2n}}
-d\varphi_{x_0}).Z,Z\right\rangle\right].
\end{split}
\end{equation}
For the last line, we used the fact that $J_0d\varphi_{x_0}^{-1}=d\varphi_{x_0}^{-1}J_1$ by definition. Using \cref{dtA0t} as in the proof of
\cref{barmut}, the formula \cref{M0} then becomes
\begin{multline}\label{b0x0}
b_{0,x_0}=\bar\mu^{-1}(x_0)
\Tr_E[{\varphi}^{E,-1}_{x_0}\tau^{E,\sigma}_{x_0}]\\
\int_{\R^{2n}}\exp\left[-\pi\left\langle(\Pi_{0,x_0}^1-d\varphi_{x_0}^{-1}
\overline{\Pi_1^0}{}_{{}_{,\mathlarger{x_0}}})
(\Id_{\R^{2n}}-d\varphi_{x_0})Z,Z\right\rangle\right]dZ.
\end{multline}
Using once again the identities mentioned above, we see that the
endomorphism inside the exponential of \cref{b0x0} is symmetric,
and its real part is obviously positive as $\PP_{1,x_0}\PP_0(d\varphi.Z,Z)$
decreases exponentially in $Z\in\R^{2n}$.
We can thus apply the classical formula for the Gaussian integral to get
\cref{aj0lef} from \cref{b0x0}. 
\end{proof}

\subsection{Higher dimensional fixed points}\label{galsec}

In this section, we extend the asymptotic expansion of \cref{asymplef} to the
case when the fixed point set $X^\varphi\subset X$
 of $\varphi$ is a submanifold of arbitrary
dimension satisfying the non-degeneracy condition of 
\cref{nondegdef}.

Let $N$ be a subbundle of $TX$ over $X^\varphi$ such that
$TX|_{X^\varphi}=N\oplus TX^\varphi$, and equip $N$ with the Euclidean
metric $g^N=g^{TX}_0|_N$. Let $|dv|_N$
be the density of $(N,g^{N})$.
We define a density $|dv|_{TX/N}$ over $X^\varphi$
by the formula
\begin{equation}\label{dvX=dvNdvXN}
|dv|_{TX}=|dv|_N|dv|_{TX/N}.
\end{equation}
Note that if $N$ is the normal bundle of $X^\varphi$ in $X$,
then $|dv|_{TX/N}$ is simply the Riemannian density of 
$(X^\varphi,g^{TX}_0|_{X^\varphi})$.
We write $P^N:TX|_{X^\varphi}\fl N$ for the orthogonal projection
with respect to $g^{TX}_0$. 

\begin{theorem}\label{indeq}
Suppose that the fixed point set $X^\varphi$ of $\varphi:X\fl X$ is
non-degenerate, and write
$X^\varphi=\coprod_{j=1}^q X^{\varphi}_j,\,q\in\N,$
for its decomposition
into connected components. Set
$d_j=\dim X^\varphi_j$, and for any $1\leq j\leq m$, let
$\lambda_j\in\C$ be the
constant value of ${\varphi}^{L,-1}$
restricted to $X^{\varphi}_j$. Then there are densities
$\nu_r$ over $X^\varphi$ for any $r\in\N$
such that for any $k\in\N$ and as $p\fl +\infty$,
\begin{equation}\label{indeqfle}
\Tr_{\HH_p}[\varphi^*_p\Tau_p]=\sum_{j=1}^q p^{d_j/2}\lambda_j^p
\left(\sum_{r=0}^{k-1} p^{-r} \int_{X_j^\varphi}\nu_r(x)
+O(p^{-k})\right).
\end{equation}
Furthermore, the following equality holds,
\begin{equation}\label{nu0}
\nu_0=\bar\mu^{-1}\Tr_E[\varphi^{E,-1}\tau^{E,\sigma}]
\det{}^{-\frac{1}{2}}_N
\left[P^N(\Pi_{0}^1-d\varphi^{-1}\overline{\Pi_1^0})
(\Id_{TX}-d\varphi)P^N\right]|dv|_{TX/N}.
\end{equation}
\end{theorem}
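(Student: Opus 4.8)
The plan is to adapt, with small modifications, the localization argument used to prove \cref{asymplef}, replacing the single Gaussian integral over $\R^{2n}$ by a family of Gaussian integrals over the normal slices of the fixed-point submanifold. As in \cref{tr1}, write $\Tr_{\HH_p}[\til\varphi^*_p\Tau_p]=\int_X\Tr_{E_p}[\til\varphi^{-1}_p.\Tau_p(\varphi(x),x)]\,dv_X(x)$. Since $X^\varphi$ is compact and, by \cref{nondegdef}, $\Id_{TX}-d\varphi$ is injective and uniformly bounded below on any complement of $TX^\varphi$, a first-order Taylor expansion of $\varphi$ along $X^\varphi$ (where $\varphi=\id$) yields $c,\delta_0>0$ with $d(x,\varphi(x))\geq c\,d(x,X^\varphi)$ for $d(x,X^\varphi)<\delta_0$, while $d(x,\varphi(x))$ is bounded below by a positive constant once $d(x,X^\varphi)\geq\delta_0$. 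Since \cref{thetacong} holds for $\Tau_p$ by \cref{Approx}, for any $\theta\in\,]0,1[\,$ the integrand is $O(p^{-\infty})$ outside the tubular neighbourhood $U(\epsilon p^{-\theta/2})$ of $X^\varphi$, so only this neighbourhood contributes up to $O(p^{-\infty})$.

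Next I would fix a connected component $X^\varphi_j$ and use the chosen subbundle $N$ together with the vertical charts $\psi$ of \cref{trivfib} to parametrize a neighbourhood of $X^\varphi_j$ by pairs $(y,v)$ with $y\in X^\varphi_j$, $v\in N_y$, $|v|<\epsilon$, and $x=\psi_y(v)$; by \cref{dvX=dvNdvXN} and $\kappa_y(0)=1$, the measure $dv_X$ pulls back to $\mathcal{J}(y,v)\,|dv|_{N_y}(v)\,|dv|_{TX/N}(y)$ with $\mathcal{J}(y,0)=1$, and since the effective region shrinks onto $\{v=0\}$ at scale $p^{-\theta/2}$, the Taylor expansion of $\mathcal{J}$ at $v=0$ only affects the densities $\nu_r$ for $r\geq1$. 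Working in the trivialization of \cref{trivfib} around $y$ and identifying $T_yX\simeq\R^{2n}$ as in \cref{basis}, one has $\varphi(y)=y$, so the radial ODE argument \cref{varphinab}--\cref{valalphaphi} gives $\lambda_y(v)=\lambda_j\exp(|v|^3\zeta(v))$ for some smooth $\zeta$, with $\lambda_j$ the value of $\til\varphi^{L,-1}$ on $X^\varphi_j$ (locally constant by the same argument along $X^\varphi_j$). Feeding the near-diagonal expansion \cref{expTau} of $\Tau_p$ together with the Taylor expansions of $\varphi$, of $\PP_{1,y}\PP_0$, and of $\til\varphi^{E,-1}\kappa_y^{-1/2}(\kappa_y^{-1/2}\circ\varphi)\exp(p|v|^3\zeta)$ — exactly as in \cref{PPphi=PPdphi} through \cref{trkertau2} — and changing variables $v\mapsto\sqrt p\,v$ on $N_y$ (which produces the factor $p^{-(2n-d_j)/2}$ and, combined with the $p^n$ of \cref{expTau}, the prefactor $p^{d_j/2}$), one obtains an asymptotic series in $p^{-1/2}$ whose coefficients are integrals $\int_{N_y}(\text{polynomial in }v)\,\PP_{1,y}\PP_0(d\varphi_y v,v)\,|dv|_{N_y}(v)$. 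Since $\PP_{1,y}\PP_0(d\varphi_y v,v)=\det(A_1^0)^{1/2}\exp(\text{an even quadratic in }v)$ by \cref{PtP0fla} and the polynomials occurring have the parity of their order, the half-integer powers of $p$ integrate to $0$; this proves \cref{indeqfle} with densities $\nu_r$ on $X^\varphi_j$.

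To identify $\nu_0$, use $G_{0,1,y}=\mu(y)\til\tau^{E,\sigma}_y$ from \cref{expTau} and the chain of identities in \cref{computP1P0phi} to rewrite the phase: the coefficient of $p^{d_j/2}\lambda_j^p$ in the $y$-slice is
\[
\bar\mu^{-1}(y)\,\Tr_E\big[\til\varphi^{E,-1}_y\til\tau^{E,\sigma}_y\big]\int_{N_y}\exp\Big(-\pi\big\langle(\Pi_0^1-d\varphi^{-1}\overline{\Pi_1^0})(\Id-d\varphi)v,v\big\rangle\Big)\,|dv|_{N_y}(v),
\]
all objects being taken at $y$, where $\mu(y)\det(A_1^0)^{1/2}=\bar\mu^{-1}(y)$ follows from \cref{barmut} and \cref{dtA0t} as in \cref{asymplef}. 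For $v\in N_y$ this quadratic form equals $\langle P^N(\Pi_0^1-d\varphi^{-1}\overline{\Pi_1^0})(\Id-d\varphi)P^N v,v\rangle$; the endomorphism $P^N(\Pi_0^1-d\varphi^{-1}\overline{\Pi_1^0})(\Id-d\varphi)P^N$ of $N_y$ is symmetric (a symmetric endomorphism compressed by the orthogonal projection $P^N$) and has real part the quadratic form $\tfrac12\langle A_0^1(\Id-d\varphi)\cdot,(\Id-d\varphi)\cdot\rangle$, which is positive on $N_y$ because $\Ker(\Id-d\varphi)=T_yX^\varphi$ meets $N_y$ only at $0$. By the Gaussian formula and the square-root convention of \cref{setting}, the integral equals $\det^{-1/2}_N[P^N(\Pi_0^1-d\varphi^{-1}\overline{\Pi_1^0})(\Id-d\varphi)P^N]$, and integrating over $y\in X^\varphi_j$ against $|dv|_{TX/N}$ gives exactly \cref{nu0}.

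The main obstacle I anticipate is the setup of the first two paragraphs: arranging the tubular-neighbourhood/Fermi-coordinate bookkeeping so that the normal Gaussian integral emerges precisely as the $P^N(\cdots)P^N$-compression — note that $d\varphi_y(N_y)$ need not lie in $N_y$, so one must track carefully that, after applying \cref{computP1P0phi}, the phase only ever pairs a full vector of $\R^{2n}$ against $v\in N_y$ — together with establishing the uniform lower bound $d(x,\varphi(x))\gtrsim d(x,X^\varphi)$ on a tubular neighbourhood. Once these are in place, the vanishing of the half-integer powers and the computation of $\nu_0$ are routine transcriptions of the corresponding steps in the proof of \cref{asymplef}.
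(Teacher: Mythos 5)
Your proposal is correct and follows essentially the same route as the paper's proof: localization to a $p^{-\theta/2}$-tubular neighbourhood of $X^\varphi$ via the off-diagonal decay of $\Tau_p$, Fermi-type coordinates along $X^\varphi$ using the transverse bundle $N$ and the density factorization \cref{dvX=dvNdvXN}, the radial ODE argument for the lift to $L$, rescaling $v\mapsto\sqrt{p}\,v$ on the normal slices to produce the $p^{d_j/2}$ prefactor, parity to kill half-integer powers, and the Gaussian integral over $N_y$ compressed by $P^N$ together with the identity $\mu\det(A_1^0)^{1/2}=\bar\mu^{-1}$ to identify $\nu_0$. The points you flag as delicate (the uniform lower bound on $d(x,\varphi(x))$ near $X^\varphi$ and the fact that the phase from \cref{computP1P0phi} restricts correctly to $N_y$ even though $d\varphi(N_y)\not\subset N_y$) are handled in the paper exactly as you propose.
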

\begin{proof}
For any $1\leq j\leq q$, take a tubular neighborhood $U_j$ of
$X^{\varphi}_j$. Then by \cref{thetacong} and \cref{expTau}, we know that
\begin{equation}\label{trgal}
\Tr_{\HH_p}[\varphi^*_p\Tau_p]=\sum_{j=1}^q\int_{U_j} \Tr_{E_p}\left[{\varphi}_p^{-1}.\Tau_p(\varphi(x),x)\right] dv_X(x) +O(p^{-\infty}).
\end{equation}
Then as in the proof of \cref{asymplef}, we can assume 
without loss of generality
that $X^\varphi$ has only one connected component, and set
$d=\dim X^{\varphi}$. Furthermore, as all the computations are local
on $X^{\varphi}$, we can assume $X^\varphi$ oriented.
Write $dv_{X^\varphi}$ for the Riemannian volume form of
$(X^\varphi,g^{TX}_0|_{X^\varphi})$.

Let $|\cdot|_N$ be the norm on the subbundle $N\subset TX|_{X^\varphi}$ 
transverse to $TX^\varphi$ over $X^\varphi$
induced by $g^N=g^{TX}_0|_N$ as above.
Let $\psi$ be a smooth family of vertical charts constant along
$[0,1]$ as in \cref{trivfib},
and let $\epsilon_0>0$ be such that $\psi$
restricted to the ball bundle
$B^N(0,\epsilon_0):=\{w\in N~|~|w|_N<\epsilon_0\}$ is a diffeomorphism on
its image. Then $U(\epsilon_0):=\psi\left(B^N(0,\epsilon_0)\right)$
is a tubular neighborhood of $X^\varphi$ in $X$.

Let $dw$ be a Euclidean volume form on the fibres of $(N,g^{TN})$
such that
the volume form $dwdv_{X^\varphi}$ on the total space of $N$
is compatible
with the orientation of $X^\varphi$. Let $dv_{TX/N}$
be the volume form over $X^\varphi$ such that
for any $y\in X^\varphi$ and $w\in U_y(\epsilon_0)$,
\begin{equation}\label{volE2}
dv_X(y,w)=h(y,w)~dw dv_{X^\varphi}(y),
\end{equation}
for some function $h\in\cinf(U(\epsilon_0),\R)$ satisfying $h(y,0)=1$
for all $y\in X^\varphi$.
Following the proof of \cref{asymplef}, by \cref{nondegdef} and compactness
of $X^\varphi$, we know that
\begin{equation}
\inf\limits_{w\in N,\,|w|_N=1}|(d\varphi-\Id_{TX})w|_{g^{TX}_0}>0.
\end{equation}
Let us consider $\epsilon>0$ small enough so that
$\varphi(U(\epsilon))\subset U(\epsilon_0)$. As in \cref{borneinf}, we get
$\epsilon'>0$ such that for all $p\in\N^*$ and
$x\in X\backslash U(\epsilon p^{-\frac{\theta}{2}})$, we have
$\varphi(x)\in X\backslash U(\epsilon' p^{-\frac{\theta}{2}})$.
By \cref{Approx} and \cref{thetacong} as in \cref{tr2}, for
any $p\in\N^*$ we get
\begin{multline}\label{intNeps}
\Tr_{\HH_p}[\varphi^*_p\Tau_p]=\int_{U(\epsilon p^{-\frac{\theta}{2}})}\Tr_{E_p}\left[{\varphi}_p^{-1}.\Tau_p(\varphi(x),x)\right] dv_X(x)+O(p^{-\infty})\\
=\int_{y\in X^\varphi}\int_{B^{N_y}(0,\epsilon p^{-\frac{\theta}{2}})}\Tr_{E_p}\left[{\varphi}_p^{-1}.\Tau_p(\varphi(y,w),(y,w))\right]\\
h(y,w)dw dv_{TX/N}(y)+O(p^{-\infty}).
\end{multline}
Recall that we assumed $X^\varphi$ connected. By \cref{varphinab}, the unitary
endomorphism $\varphi^{L,-1}_y\in\End(L_y)\simeq\C$ identifies
with a constant complex number $\lambda\in\C$ such that $|\lambda|=1$.
Fix $x_0\in X^\varphi$. We will estimate the middle integral of the right hand side of \cref{intNeps} for $y=x_0$ in the coordinates and
trivialization of \cref{trivfib} .
From
\cref{expTau}, \cref{volE2}
and following \cref{phiL=alpha}-\cref{trkerTau}, we get a function
$\zeta\in\cinf(B^{T_{x_0}X}(0,\epsilon),\C)$ such that for any
$\delta\in\,]0,1[$ and $k\in\N$, there is $\theta\in\,]0,1[$ such that
for all $p\in\N^*$,
\begin{multline}\label{trlocgal}
\int_{B^{N_{x_0}}(0,\epsilon p^{-\frac{\theta}{2}})} \Tr_E[\varphi_{p,w}^{-1}\Tau_p(\varphi(w),w)]h(x_0,w) dw=\\
p^n\sum_{r=0}^{k-1} p^{-\frac{r}{2}}\lambda^p\int_{B^{N_{x_0}}(0,\epsilon p^{-\frac{\theta}{2}})}\Tr_E[\varphi^{E,-1}_wG_{r,x_0}\PP_{1,x_0}\PP_0(\sqrt{p}\varphi(w),\sqrt{p}w)]\\
e^{p\zeta(w)} dw+O(p^{-\frac{k}{2}+\delta}).
\end{multline}
By the argument of the proof of \cref{asymplef}, we deduce from
\cref{trlocgal} the existence of a sequence 
$\{H_{r,x_0}(Z,Z')\in E_{0,x_0}\otimes E_{1,x_0}^*\}_{r\in\N}$ of
polynomials in $Z,Z'\in\R^{2n}$ of the same parity as $r$, depending
smoothly in $x_0\in X$ and with
$H_{0,x_0}(Z,Z')=\mu(x_0)\tau^{E,\sigma}_{x_0}$,
such that for any $\delta\in\,]0,1[$ and $k\in\N$, there is a 
$\theta\in\,]0,1[$ such that for all $p\in\N^*$,
\begin{multline}\label{trlocgal2}
\int_{B^{N_{x_0}}(0,\epsilon p^{-\frac{\theta}{2}})} \Tr_E[\varphi_{p,w}^{-1}\Tau_p(\varphi(w),w)] h(x_0,w) dw\\
=p^{\frac{d}{2}}\sum_{r=0}^{k-1} p^{-\frac{r}{2}}\lambda^p\int_{N_{x_0}}\Tr_E\left[\varphi^{E,-1}_{x_0}H_{r,x_0}\PP_{1,x_0}\PP_0(d\varphi_{x_0}.w,w)\right]
dw
+p^{\frac{d}{2}}O(p^{-\frac{k}{2}+\delta}).
\end{multline}
Recall that $\Ker(d\varphi_{x_0}-\Id_{T_{x_0}X})\cap N_{x_0}=\{0\}$
by assumption, so that
the integrand of \cref{trlocgal2} decreases exponentially in $w\in N_{x_0}$
by \cref{PtP0fla}.
Then repeating the arguments of \cref{trkertau2}-\cref{trkertau3}
and by integrating with respect to $x_0\in X^\varphi$, we produce from
\cref{intNeps} and \cref{trlocgal2} a sequence $\{\nu_r\}_{r\in\N}$ of
densities over $X^\varphi$, such that under the assumption of a unique
connected component of $X^\varphi$, we have
\begin{equation}
\Tr_{\HH_p}[\varphi^*_p\Tau_p]=\lambda^p p^{\frac{d}{2}}\sum_{r=0}^{k-1} p^{-r}\int_{X^\varphi}\nu_r+p^{\frac{d}{2}}O(p^{-\frac{k}{2}+\delta}),
\end{equation}
and for any $y\in X$,
\begin{equation}\label{nu0comput}
\nu_0(y)=\Tr_E[\varphi^{E,-1}_{y}\tau^{E,\sigma}_y
]\mu(y)
\left(\int_{N_y}\PP_{1,y}\PP_0(d\varphi_{y}.w,w)dw\right)
|dv|_{TX/N}(y).
\end{equation}
Then the computation of \cref{nu0} from \cref{nu0comput}
is analogous to \cref{computP1P0phi}.
The case of multiple components follows by \cref{trgal} and linearity.
\end{proof}

\section{Applications}\label{Appli}

In \cref{geomint}, we interpret the formulas found in 
\cref{asymplef,indeq}, and
show that they are compatible
with the local equivariant index formula in the holomorphic case.
In \cref{hitsec}, we introduce the notion of a Hitchin connection,
and relate it with the notion of Toeplitz connection introduced
in \cref{connectiontoep}.
In \cref{Witsec}, we
present an application to Witten's asymptotic expansion conjecture
for the quantum representations of the mapping class group.

\subsection{Geometric interpretation}\label{geomint}
%
Recall from \cref{setting}
that the relative
canonical line bundle $K_X:=\det(T^{(1,0)*}X)$ of a prequantized
fibration is endowed with the connection $\nabla^{K_X}$
induced by the vertical Levi-Civita connection $\nabla^{TX}$
defined by \cref{LCTX}. In this section, we will make use of the
isomorphism
\begin{equation}\label{isoTXT*X}
\begin{split}
T^{(0,1)}X&\longrightarrow T^{(1,0)*}X\\
\overline{w}&\longmapsto g^{TX}(\overline{w},\cdot)
\end{split}
\end{equation}
of complex vector bundles over $X\times [0,1]$
induced by the relative Riemannian
metric $g^{TX}$ seen as a $\C$-bilinear form over $TX_\C$.
Via this isomorphism,
the line bundle
with connection $(K_X,\nabla^{K_X})$ identifies with
the line bundle $\det(T^{(0,1)}X)$ endowed with the connection
$\nabla^{\det(T^{(0,1)}X)}$ induced by $\nabla^{TX}$.
The following lemma gives a geometric interpretation of the
function $\mu_t\in\cinf(X,\C)$ defined by formula \cref{tilmutdef}
in the local model, and allows to deduce \cref{Approxintro}
from \cref{Approx}.

\begin{lem}\label{mutgeom}
For any $t\in[0,1]$, we have following formula
for the function $\mu_t\in\cinf(X,\C)$ appearing
\cref{Approx},
\begin{equation}\label{barmutiltau=L}
\bar{\mu}_t^{2}=\det(\overline{\Pi_t^0})^{-1}\tau^{K_X}_t\,,
\end{equation}
where $\tau^{K_X}_t:K_{X,0}\fl K_{X,t}$ is the parallel
transport in $K_X$ over
horizontal lines of $[0,1]\times X$
with respect to $\nabla^{K_X}$ and
$\det(\overline{\Pi_t^0}):K_{X,0}\fl K_{X,t}$ is the
bundle isomorphism induced by $\overline{\Pi_t^0}$ via the isomorphism
\eqref{isoTXT*X}.
\end{lem}
\begin{proof}
Consider the
setting of \cref{trivfib}. Using Koszul formula and the 
definition \cref{LCTX} of $\nabla^{TX}$, we know that
\begin{equation}
\nabla^{TX}_{\delt^H}=\dt+\frac{1}{2}J_t\left(\dt J_t\right)
\end{equation}
in the tautological fibration $\pi:[0,1]\times X\fl [0,1]$.
Thus by \cref{nab10=Pnab}, for all $t\in[0,1]$ we have
\begin{equation}\label{nabdt=PdtP}
\nabla^{T^{(0,1)}X}_{\delt^H}=P^{(0,1)}_t\dt P^{(0,1)}_t.
\end{equation}
Recall the notations of \cref{locmod}, and
for any $t\in[0,1]$, let
$\overline{\Pi_t^0}\in\End(TX_\C)$
be the projection on $T^{(0,1)}X_t$ with kernel
$T^{(1,0)}X_0$, which restricts to a natural bundle
isomorphism between $T^{(0,1)}X_0$ and $T^{(0,1)}X_t$.
Using \cref{At0}-\cref{Pi0t'}, for any $w\in T^{(0,1)}X_0$,
we compute
\begin{equation}\label{PdtPPit=muPit}
\begin{split}
P^{(0,1)}_t\dt P^{(0,1)}_t\overline{\Pi_t^0}.w&=
P^{(0,1)}_t\left(\dt A_t^0\right)P^{(0,1)}_0.w\\
&=\left(-\frac{1}{2}\overline{\Pi_t^0}\dt(-J_0J_t)\right)\overline{\Pi_t^0}.w.
\end{split}
\end{equation}
By the definition \cref{tilmutdef}
of $\mu_t$ and via the isomorphism \eqref{isoTXT*X} induced by
the relative Riemannian metric $g^{TX}$, this shows
\eqref{barmutiltau=L}.
%
\end{proof}

Recall \cref{nutil} and \cref{dvX=dvNdvXN}, and consider
the context of \cref{locsec}.
The following Lemma gives a geometric interpretation for the
localization formula \cref{nu0},
and allows to deduce \cref{indeqintro}
from \cref{indeq}.

\begin{lem}\label{coeffgeom}
Suppose that the tangent bundle $TX$ over the fixed point set
$X^\varphi$ admits a decomposition $TX=TX^\varphi\oplus N$
preserved by $d\varphi$ and $J_0$.
Then over any connected component of $X^\varphi$ of dimension $2d$,
the first coefficient \cref{nu0} in \cref{indeq} satisfies
\begin{equation}\label{aj02}
\nu_0=(-1)^{\frac{n-d}{2}}
\Tr_E[\varphi^{E,-1}\tau^{E,\sigma}]
(\varphi^{K_X}\tau^{K_X,-1})^{1/2}|\det{}_N(\Id_{TN}-d\varphi|_N)|^{-\frac{1}{2}}
\,|dv|_{TX/N},
\end{equation}
for some natural choice of square roots depending on $\varphi$
and the choice of the path.
\end{lem}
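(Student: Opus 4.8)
The plan is to derive \cref{aj02} from the general formula \cref{nu0} established in \cref{indeq} by rewriting its three nontrivial factors, $\bar\mu^{-1}$, the determinant term, and the relation to $|dv|_{TX/N}$, using the complex-linearity hypothesis on $N$. The key point is that under the assumption that $d\varphi$ preserves a complex subbundle $N\subset(TX|_{X^\varphi},J_0)$ transverse to $TX^\varphi$, one has $J_1|_N=d\varphi J_0d\varphi^{-1}|_N=J_0|_N$ on the image; more precisely $N$ is a complex subbundle for both $J_0$ and $J_1$ and $d\varphi|_N$ is complex-linear. This forces several of the projection operators of \cref{locmod} to restrict nicely to $N_\C$: in particular $\Pi_0^1$ and $\overline{\Pi_1^0}$ act on $N_\C=N^{(1,0)}\oplus N^{(0,1)}$ compatibly with the $J_0$-splitting, so that $P^N(\Pi_0^1-d\varphi^{-1}\overline{\Pi_1^0})(\Id-d\varphi)P^N$ can be block-diagonalized with respect to $N^{(1,0)}\oplus N^{(0,1)}$.

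First I would make the hypothesis precise and record the consequence that $\Pi_{0}^1|_{N_\C}=P^{N,(1,0)}$ and $\overline{\Pi_1^0}|_{N_\C}=P^{N,(0,1)}$, so that $P^N(\Pi_0^1-d\varphi^{-1}\overline{\Pi_1^0})(\Id-d\varphi)P^N$, restricted to $N_\C$, becomes $(P^{N,(1,0)}-d\varphi^{-1}P^{N,(0,1)})(\Id-d\varphi)$. Computing its determinant on $N_\C=N^{(1,0)}\oplus N^{(0,1)}$: on $N^{(1,0)}$ it acts as $\Id-d\varphi|_{N^{(1,0)}}$, and on $N^{(0,1)}$ as $-d\varphi^{-1}(\Id-d\varphi|_{N^{(0,1)}})=d\varphi^{-1}-\Id$ up to sign; since $d\varphi|_N$ is complex-linear, $d\varphi|_{N^{(0,1)}}=\overline{d\varphi|_{N^{(1,0)}}}$, and the product of the two blocks yields, up to the factor $(-1)^{\dim_\C N}=(-1)^{n-d}$ and a factor $\det_N d\varphi^{-1}$, precisely $|\det_N(\Id-d\varphi|_N)|^2$ after taking the square root. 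So $\det_N^{-1/2}[\cdots]$ becomes $(-1)^{(n-d)/2}(\det_N d\varphi)^{1/2}|\det_N(\Id-d\varphi|_N)|^{-1/2}$ for an appropriate choice of square roots. The determinant $\det_N d\varphi$ I would then recognize, via the isomorphism $K_{X,0}|_N\simeq K_{X^\varphi}\otimes\det(N^{(0,1)})^*$ and the fact that $\til\varphi^{K_X}$ restricted over $X^\varphi$ is the induced action on $K_X=\det T^{(0,1)}X$, as being related to $\til\varphi^{K_X}\til\tau^{K_X,-1}$; together with \cref{barmutiltau=L}, i.e. $\bar\mu^{-2}\til\tau^{K_X}=\det(\overline{\Pi^0})$, this lets me absorb the $\bar\mu^{-1}$ prefactor and the $(\det_N d\varphi)^{1/2}$ into the single factor $(\til\varphi^{K_X}\til\tau^{K_X,-1})^{1/2}$.

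Thus the sequence of steps is: (1) translate the complex-linearity hypothesis into explicit restrictions of $\Pi_0^1,\overline{\Pi_1^0}$ to $N_\C$; (2) block-diagonalize and compute the determinant factor in \cref{nu0}, extracting the sign $(-1)^{(n-d)/2}$ and a factor $(\det_N d\varphi)^{1/2}$; (3) use \cref{barmutiltau=L} together with the identification of $\til\varphi^{K_X}\til\tau^{K_X,-1}$ over $X^\varphi$ with the determinant of $d\varphi$ acting on $T^{(0,1)}X_0$ modulo the $T^{(0,1)}X^\varphi$ directions — where $d\varphi$ acts trivially — to combine $\bar\mu^{-1}$ and $(\det_N d\varphi)^{1/2}$ into $(\til\varphi^{K_X}\til\tau^{K_X,-1})^{1/2}$; (4) observe that $|dv|_{TX/N}$ is unchanged and collect terms to obtain \cref{aj02}. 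The main obstacle will be step (3): tracking the various square roots consistently so that the "natural choice depending on $\varphi$ and $\gamma$" is well-defined and the signs match — in particular reconciling the analytic-continuation convention for $\det^{1/2}$ fixed at the end of \cref{setting} with the geometric square root of $\til\varphi^{K_X}\til\tau^{K_X,-1}$, and checking that the parities $(n-d)$ is even so that $(-1)^{(n-d)/2}$ makes sense (which follows since $N$ is a complex bundle of complex rank $n-d$, so $\dim_\R N=2(n-d)$, but we need $n-d$ even for the exponent, which should follow from $\dim X^\varphi$ being even as a symplectic-type condition — I would verify this using that $X^\varphi$ inherits a symplectic structure when $\varphi$ preserves $\om$, hence $d$ is even and so is $n-d$).
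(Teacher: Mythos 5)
Your proposal contains a genuine gap at the very first step. You claim that the hypothesis that $d\varphi$ preserves a complex subbundle $N$ of $(TX|_{X^\varphi},J_0)$ implies that $J_1|_N=J_0|_N$, i.e.\ that $d\varphi|_N$ is complex-linear for $J_0$, and you then use this to identify $\Pi_0^1|_{N_\C}=P^{N,(1,0)}$ and $\overline{\Pi_1^0}|_{N_\C}=P^{N,(0,1)}$ and to block-diagonalize with respect to $N^{(1,0)}\oplus N^{(0,1)}$. That implication is false: the hypothesis only says $J_0N=N$ and $d\varphi N=N$, not that $d\varphi$ and $J_0$ commute on $N$. (Consider a generic linear symplectomorphism of $\R^{2n}$ with a non-degenerate fixed set; it need not be $\C$-linear for $J_0$.) Indeed the remark after \cref{indeqintro} says the hypothesis is automatic when $\dim X^\varphi=0$ — in which case $N=TX|_{X^\varphi}$ — and no complex-linearity of $d\varphi$ is available there. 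Without complex-linearity, the kernel of $\Pi_0^1$ restricted to $N_\C$ is $N_\C\cap T^{(0,1)}X_1=d\varphi(N^{(0,1)})$, \emph{not} $N^{(0,1)}$, so your block structure is wrong and your determinant computation in step (2) does not go through.

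The paper's proof sidesteps this by \emph{not} using the $J_0$-splitting of $N_\C$. Instead it records the operator identities \cref{Pi-dphipi}, valid with no complex-linearity assumption: $(\Pi_0^1-d\varphi^{-1}\overline{\Pi_1^0})$ acts as $+\Id$ on $T^{(1,0)}X_0$ and sends $d\varphi.v\mapsto -v$ for $v\in T^{(0,1)}X_0$. This diagonalizes the operator in the \emph{non}-$J_0$-adapted decomposition $N_\C=N^{(1,0)}\oplus d\varphi(N^{(0,1)})$. The determinant is then computed as a ratio of top wedge products as in \cref{varphidet}, which produces the factor $\til\varphi^{K_X}\det(\overline{\Pi_1^0})^{-1}$ directly (rather than your claimed $\det_N d\varphi$), and combined with \cref{barmutiltau=L} this gives $\bar\mu^{-2}\til\varphi^{K_X}(\til\tau^{K_X})^{-1}$. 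If you want to keep a block-diagonalization approach you would need to work in the basis $\{w_j,d\varphi\bar w_j\}$ rather than $\{w_j,\bar w_j\}$ of $N_\C$; as written, your argument only covers the special case where $\varphi$ preserves $J_0$ on $N$.

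Secondary issues worth flagging: your appeal to "a factor $\det_N d\varphi^{1/2}$" is problematic because $\det_N d\varphi|_N=1$ (the operator preserves the restriction of $\om$ to $N$), whereas the genuine extra factor is $\det(d\varphi|_{N^{(1,0)}})^{\pm 1}$, a unit complex number that does not reduce to $1$; and the sign $(-1)^{n-d}$ you extract from writing $\Id-d\varphi^{-1}$ as "$d\varphi^{-1}-\Id$ up to sign" is introduced by hand, not by the computation.
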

\begin{proof}
Let $\varphi:X\fl X$ be a diffeomorphism lifting to $(L,h^L,\nabla^L)$,
sending $J_0$ to $J_1$ as in \cref{J_1}.
Let $x\in X$ be a fixed point of $\varphi$. Using that
$\Pi_{0,x}^1,\,\overline{\Pi_1^0}{}_{{}_{\mathlarger{,x}}}
\in\End(T_xX_\C)$ are the projection operators on
$T^{(1,0)}_xX_0$, $T^{(0,1)}_xX_1$ with
kernel $T^{(0,1)}_xX_1,\,T^{(1,0)}_xX_0$, and that
$d\varphi.\,T^{(1,0)}_{x}X_0=T^{(1,0)}_{x}X_1$, we know that
\begin{equation}\label{Pi-dphipi}
\begin{split}
&(\Pi_{0,x}^1-d\varphi^{-1}_{x}\overline{\Pi_1^0}{}_{{}_{\mathlarger{,x}}}
).\,v=v
~~\text{for any}~~v\in T^{(1,0)}_xX_0,\\
&(\Pi_{0,x}^1-d\varphi^{-1}_{x}\overline{\Pi_1^0}{}_{{}_{\mathlarger{,x}}}
)\,d\varphi_x.v=
-v~~\text{for any}~~v\in T^{(0,1)}_xX_0.
\end{split}
\end{equation}
In particular, as $T^{(1,0)}X_0$ and
$d\varphi.\,T^{(1,0)}_{x}X_0=T^{(1,0)}_{x}X_1$ are transverse
by \cref{splitt0}, we get that
$(\Pi_{0,x}^1-d\varphi^{-1}_{x}\overline{\Pi_1^0}
{}_{{}_{\mathlarger{,x}}}
)$ is invertible, and
 if $\{w_j\}_{j=1}^n$ is a basis of $T^{(1,0)}_{x}X_0$,
\begin{equation}\label{varphidet}
\begin{split}
\det(\Pi_{0,x}^1-d\varphi^{-1}_{x}\overline{\Pi_1^0}
{}_{{}_{\mathlarger{,x}}})^{-1}&=
\frac{w_1\wedge\dots\wedge w_n\wedge
(-d\varphi_{x}).\overline{w}_1\wedge\dots\wedge(-d\varphi_{x}).
\overline{w}_n}
{w_1\wedge\dots\wedge w_n\wedge\overline{w}_1
\wedge\dots\wedge\overline{w}_n}\\
&=(-1)^n\frac{d\varphi_{x}.\overline{w}_1\wedge\dots\wedge
d\varphi_{x}.\overline{w}_n}
{\overline{\Pi_1^0}{}_{{}_{\mathlarger{,x}}}\overline{w}_1\wedge
\dots\wedge \overline{\Pi_1^0}{}_{{}_{\mathlarger{,x}}}
\overline{w}_n}\\
&=(-1)^n\varphi^{K_X}_{x}
\det(\overline{\Pi_1^0})^{-1}_{x}\,,
\end{split}
\end{equation}
with $\det(\overline{\Pi_t^0}):K_{X,0}\fl K_{X,t}$ as in \cref{mutgeom}
and $\varphi^{K_X}:K_{X,0}
\fl K_{X,1}$ the natural bundle map induced by $\varphi$
via the isomorphism \eqref{isoTXT*X}, which commutes with
the natural action of $\varphi$ as
$g^{TX}_1(\cdot,\cdot)=g^{TX}_0(d\varphi\cdot,d\varphi\cdot)$
by definition.
Then in the notations of \cref{locsec}, by \cref{barmutiltau=L} and \cref{varphidet} we get
\begin{equation}\label{varphidet1/2}
\begin{split}
\bar{\mu}^{-2}(x)\det{}(\Pi_{0,x}^1-d\varphi^{-1}_{x}
\overline{\Pi_1^0}{}_{{}_{\mathlarger{,x}}})
&=(-1)^{n}\varphi^{K_X}_{x}\bar{\mu}^{-2}(x)
\det(\overline{\Pi_1^0})^{-1}_{x}\\
&=(-1)^{n}\varphi^{K_X}_{x}
(\tau^{K_X}_{x})^{-1}.
\end{split}
\end{equation}
Assume now that $d\varphi$ and $J_0$ preserve a decompostion
$TX=TX^\varphi\oplus N$ over $X^\varphi$.
Then by \cref{Pi-dphipi}, we see that
$(\Pi_{0}^1-d\varphi^{-1}\overline{\Pi_1^0})$ over
$X^\varphi$ preserves $N$ as well, and that its restriction to $N$
is invertible. Then by computations analogous to
\cref{varphidet} and by \cref{varphidet1/2}, over any connected
component of $X^\varphi$ of dimension $2d$, we get
\begin{equation}\label{coeffint}
\begin{split}
\bar{\mu}^{-2}\det{}_N
\Big[P^N(\Pi_{0}^1-&d\varphi^{-1}\overline{\Pi_1^0})
(\Id_{TX}-d\varphi)P^N\Big]^{-1}\\
&=\bar{\mu}^{-2}\det{}_N\left[(\Pi_{0}^1-d\varphi^{-1}
\overline{\Pi_1^0})|_N\right]^{-1}\det{}_N(\Id_N-d\varphi|_N)^{-1}\\
&=(-1)^{n-d}\varphi^{K_X}(\tau^{K_X})^{-1}
\det{}_{N}(\Id_N-d\varphi|_N)^{-1}.
\end{split}
\end{equation}
This together with \cref{nu0} shows \cref{aj02}.
\end{proof}
We see that formula \cref{nu0} acquires a natural interpretation
in terms of parallel transport in a square root of $K_X$, which always exists locally.
In particular, if $c_1(TX)$ is even, so that
we can take $E=:K_X^{1/2}$ whose square is equal to $K_X$ with
induced metric and
connection, and if there is a lift of $\varphi$ to
$K_X^{1/2}$ squaring to $\varphi^{K_X}$ on $K_X$,
Then \cref{aj02} simplifies and we recover \cite[Th.5.3.1]{Cha10}
as a special case, when $(X,J,\om)$ Kähler and $X^\varphi$ discrete.
Consider now the context of \cref{genTeopsec}.
\begin{lem}\label{Tautologiclem}
Let $\pi:B\times X\fl B$ be a tautological fibration with relative complex structure $J\in\End(TX)$
and auxiliary vector bundle $(E,h^E,\nabla^E)$,
and let $\gamma:[0,1]\fl B$ be such that $\gamma(0)=\gamma(1)=b_0\in B$.
Let
$\Tau_p\in\End(\HH_{p,b_0})$ be the parallel transport in $\HH_p$
along $\gamma$ with respect to a Toeplitz connection,
for all $p\in\N^*$ big enough. Then there exists $C>0$ such that as
$p\fl +\infty$,
\begin{equation}\label{Tautologic}
\|\Tau_p-P_p\,(\tau^{K_X})^{-\frac{1}{2}}\,
\tau^{E,\sigma}P_p\|_{p,b_0}<Cp^{-1},
\end{equation}
for an appropriate choice of square root of the parallel transport
$\tau^{K_X}\in\End(K_X)\simeq\cinf(X,\C)$ in $K_X$ along $\gamma$.
\end{lem}
\begin{proof}
Restricting the fibration over $\gamma$ as in \cref{trivfib}, we see that
the diffeomorphism $\tau_1:X_0\fl X_1$ defined in \cref{tausmall}
is the identity of $X\simeq X_1=X_0$, so that $J_1=J_0$, 
$P_{p,1}=P_{p,0}$ and $\det(\overline{\Pi_1^0})=1$.
Then \cref{Tautologic} is a consequence of \cref{Approx},
comparing \cref{Taucoeff} with \cref{barmutiltau=L}.
\end{proof}
%
%
In the case $\gamma$ is contractible in $B$, if the fibration is
holomorphic as in \cref{hol} and for the $L^2$-connection, this result
is a consequence of the computations in \cite[Th.2.1]{MZ07} of the
curvature of $\nabla^{\HH_p}$.
Note that \cite[Th.2.1]{MZ07} applies for general Kähler fibrations, which need not be tautological, and the same is true
for \cref{Approx}.
Taking $\varphi:X\fl X$ to be the diffeomorphism identifying
$X_1$ with $X_0$ via \cref{tausmall} on a loop $\gamma:[0,1]\fl B$,
we recover in general the first coefficient of \cite[(8)]{MZ07}
as the contribution of $\varphi_p:L^p\fl L^p$ through the description of the curvature as the derivative of the parallel transport.

\subsection{Hitchin connections}
\label{hitsec}

In this section, we describe an important class of
Toeplitz connections called \emph{Hitchin connections},
which appear naturally in the context of geometric quantization
of moduli spaces. These are the connections used
in the application of \cref{indeq} to Witten's asymptotic
expansion conjecture, which we describe in \cref{Witsec}.

Consider a holomorphic prequantized fibration 
$\pi:M\fl B$, and fix $p_0\in\N$
as in \cref{specdeltapphi} for $U=B$.
Recall that $J\in\End(TX)$ over $M$ denotes the associated
relative compatible complex structure.
For any $v\in\cinf(B,TB)$, let $\tau_t^v$ be the flow on $M$
generated by
its horizontal lift $v^H\in\cinf(M,TM)$ with respect to
\cref{split} at time $t\in\R$. Then $\tau_t^v$
preserves the fibres, so that its differential
restricts to a bundle map $d\tau_t^{v}:TX\fl TX$.
Define the Lie derivative of $J$ by the formula
\begin{equation}
L_v J=\dt\Big|_{t=0}d\tau_t^{v}.\,J.\,(d\tau_t^{v})^{-1}\in\End(TX).
\end{equation}
Then $L_vJ\in\End(TX)$ exchanges $T^{(1,0)}X$ and $T^{(0,1)}X$
inside $TX$ as in \cref{splitc}.
For any $w\in TX_\C$, recall that $\iota_w\in\End(\Lambda (T^*X_\C))$
denotes the contraction by $w$.
For any $w_1,\,w_2\in TX_\C$, we define
\begin{equation}
\begin{split}
i_{w_1\otimes w_2}:\Lambda (T^* X_\C) &\fl TX_\C\otimes
\Lambda (T^* X_\C)\\
\alpha &\mapsto w_1\otimes \iota_{w_2}\alpha,
\end{split}
\end{equation}
and we extend this
definition to all of $TX_\C\otimes TX_\C$ by linearity.
Then for any $A\in TX_\C\otimes TX_\C$, we can consider
$i_A\om\in TX_\C\otimes T^*X_\C$ as an element of $\End(TX_\C)$.
Following \cite[\S\,1]{And12}, we define a section
$G\in\cinf(M,\pi^*T^*B\otimes T^{(1,0)}X\otimes T^{(1,0)}X)$
by the following
formula, for all $v\in TB$ and $w\in T^{(0,1)}X$,
\begin{equation}\label{defG}
L_vJ.w=2\pi(i_{G(v)}\om).w.
\end{equation}
%
%
%
%
We still write $\nabla^{T^{(1,0)}X}$ for the connection on
$T^{(1,0)}X\otimes T^{(1,0)}X$ induced by \cref{nab10=Pnab}.
The following definition can be found in
\cite[Def.1]{And12}.

\begin{defi}\label{rigid}
An holomorphic prequantized fibration is said to be \emph{rigid}
if for all $v\in\cinf(B,TB)$, we have
\begin{equation}
\nabla^{T^{(1,0)}X}G(v)=0.
\end{equation}
\end{defi}

For any vector bundle with connection $(E,\nabla^E)$ over $M$ and any
$A\in T^{(1,0)}M^{\otimes 2}$,
let $\Delta^{E}_A$ be the second order 
differential operator in the fibres of $\pi:M\fl B$,
defined for any $w_1,w_2\in T^{(1,0)}X$ by the formula
\begin{equation}
\Delta^{E}_{w_1\otimes w_2}s=\Tr[\nabla^{T^{(1,0)}X\otimes E}(w_1\otimes\nabla^E_{w_2}s)].
\end{equation}
%
Recall \cref{split}, and suppose that there exists $k\in\N^*$
and a function $\rho\in\cinf(M,\R)$ such that
\begin{equation}\label{ric=om+rho}
k\,\om^X
=\frac{\sqrt{-1}}{2\pi}\big[\Tr R^{T^{(1,0)}X}
+\overline{\partial}\partial\rho\big]^X.
\end{equation}
Using the notations of \cref{setting}, we are now ready to state
the following theorem, which is originally due to
\cite[\S\,3]{Hit90} and
\cite[\S\,4.b]{ADW91}. In this generality, it is due to
\cite[Th.1]{And12}.

\begin{theorem}\label{hitchin}
Let $\pi:M=B\times X\fl B$ be a rigid holomorphic
tautological fibration with $E=\C$.
Suppose that $X$ is simply connected and that \cref{ric=om+rho}
holds for some $\rho\in\cinf(X,\R)$.
For any $p\in\N^*$ and $v\in\cinf(B,TB)$,
let $\nabla^p_v$ be the differential operator acting on
$\cinf(M,L^p)$ by
\begin{equation}\label{hitchinfla}
\nabla^p_v=\nabla^{L^p}_{v^H}+\frac{1}{4p+2k}
\left(\Delta^{L^p}_{G(v)}-
\nabla^{L^p}_{i_{G(v)}d\rho}+2p\,\partial\rho.v^H\right).
\end{equation}
Then $\nabla^p_v$ preserves holomorphicity in the fibres for any
$v\in\cinf(B,TB)$ and $p\in\N^*$, and thus induces by restriction
a family of connections $\{\nabla^p\}_{p\in\N^*}$ on the 
quantum bundle $\{\HH_p\}_{p\in\N^*}$ over $B$.
\end{theorem}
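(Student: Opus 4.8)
The plan is to show that the operator $\nabla^p_v$ defined by \cref{hitchinfla} preserves fibrewise holomorphicity, since once this is established, the fact that it descends to a connection $\{\nabla^p\}_{p\in\N^*}$ on $\{\HH_p\}_{p\in\N^*}$ follows formally: the projection $P_p$ applied after $\nabla^p_v$ lands back in $\HH_p$, and the Leibniz rule is inherited from $\nabla^{L^p}$ and the tensorial nature of the correction terms. So the heart of the matter is a computation showing $\bar\partial^{L^p}(\nabla^p_v s)=0$ whenever $\bar\partial^{L^p}s=0$, where $\bar\partial^{L^p}$ is the fibrewise Dolbeault operator.

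First I would set up the fibrewise $\bar\partial$-operator on $\cinf(M,L^p)$ coming from the holomorphic structure on the fibres, and compute the commutator $[\bar\partial^{L^p},\nabla^p_v]$ acting on holomorphic sections. The term $\nabla^{L^p}_{v^H}$ fails to preserve holomorphicity precisely by an amount controlled by $L_vJ$, hence by $G(v)$ via \cref{defG}: differentiating the Cauchy--Riemann equations along the flow $\tau^v_t$ produces a term involving $\nabla^{T^{(1,0)}X}$ applied in the $(0,1)$-directions, and rigidity (\cref{rigid}) is what lets one rewrite everything through $G(v)$ without extra derivative-of-$G$ terms. The correction $\frac{1}{4p+2q}\Delta^{L^p}_{G(v)}$ is designed so that $[\bar\partial^{L^p},\Delta^{L^p}_{G(v)}]$ cancels the obstruction from $\nabla^{L^p}_{v^H}$ up to curvature terms; here the prequantization identity \cref{preq} (so that the curvature of $\nabla^{L^p}$ is $2\pi p\sqrt{-1}\,\om^X$ in the fibre directions) converts the commutator of $\Delta^{L^p}_{G(v)}$ with $\bar\partial^{L^p}$ into a first-order operator that matches, modulo the $\bar\partial\partial\rho$ correction, the leftover from the horizontal term.

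The remaining mismatch is a zeroth- and first-order operator in the fibre directions involving the Ricci form $\Tr R^{T^{(1,0)}X}$ (arising from commuting $\bar\partial$ past the two $T^{(1,0)}X$-factors in $\Delta^{L^p}_{G(v)}$) and the scalar $2p\,\partial\rho.v^H$; this is exactly where hypothesis \cref{ric=om+rho} enters, allowing one to trade $\frac{\sqrt{-1}}{2\pi}\Tr R^{T^{(1,0)}X}$ for $q\,\om^X - \frac{\sqrt{-1}}{2\pi}\bar\partial\partial\rho$, and the remaining $\om^X$-terms combine with the factor $4p+2q$ in the denominator so that the total obstruction vanishes identically. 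The simple-connectedness of $X$ is used to guarantee that the correction term, which a priori is only defined up to the choice of $\rho$ and produces a genuine (not merely projective) connection, is globally well-defined; I would invoke it at the point where one needs $\partial\rho$ to be an exact global form and the construction to be canonical, following \cite[Th.1]{And12}.

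The main obstacle will be bookkeeping the commutator $[\bar\partial^{L^p},\Delta^{L^p}_{G(v)}]$ correctly: $\Delta^{L^p}_{G(v)}$ is second-order and built from $\nabla^{T^{(1,0)}X\otimes L^p}$, so commuting $\bar\partial$ through it produces curvature contributions from both $L^p$ and $T^{(1,0)}X$, and one must carefully keep track of which directions the contractions act in, using that $G(v)\in T^{(1,0)}X\otimes T^{(1,0)}X$ while $\bar\partial$ differentiates in $T^{*(0,1)}X$. The rigidity condition is the key simplification that makes this tractable, and the prequantization normalization is what makes the $p$-dependent terms balance. Since this entire computation is carried out in \cite[\S\,3]{And12}, I would present it as a verification that the hypotheses of that theorem are met in the present setting (rigid holomorphic \emph{tautological} fibration with $E=\C$, $X$ simply connected, \cref{ric=om+rho}) and refer to \cite[Th.1]{And12} for the detailed commutator identity, spelling out only the points where the tautological structure ($\om^H=0$) simplifies the formulas.
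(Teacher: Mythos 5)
Your proposal is consistent with the paper, which gives no proof of this theorem at all: it is stated as a known result attributed to Hitchin, Axelrod--Della Pietra--Witten, and in this generality to \cite[Th.1]{And12}, exactly the reference you defer to. Your sketch of the underlying commutator computation (the role of $G(v)$, rigidity, the curvature normalization, the Ricci/$\rho$ trade-off via \cref{ric=om+rho}, and simple-connectedness) is a faithful outline of Andersen's argument, so your approach matches the paper's.
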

In particular, using parallel transport with respect to
\cref{hitchinfla}, this shows
that the dimension of the space of holomorphic sections is constant,
so that the quantum bundle is well defined for any $p\in\N^*$.
%
Recall \cref{connectiontoep}.
\begin{lem}\label{hitchintoep}
Under the hypotheses of \cref{hitchin},
the family of connections $\{\nabla^p\}_{p\in\N^*}$
defined by
\cref{hitchinfla} is a Toeplitz
connection.
Furthermore, its first coefficient in \cref{connToepexp}
satisfies
\begin{equation}\label{nu=f/2}
\sigma_0=\frac{1}{2}\partial\rho.
\end{equation}
\end{lem}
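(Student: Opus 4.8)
The plan is to identify, for a Hitchin connection given by \cref{hitchinfla}, the operator $K_p = \nabla^p - \nabla^{\HH_p}$ as a Toeplitz operator and extract its leading symbol. First I would fix a path $\gamma:[0,1]\fl B$ and work in the setting of \cref{trivfib}, so that $(L,h^L,\nabla^L)$ is the pullback of a fixed bundle over $X$, and \cref{nabHptriv} gives $\nabla^{\HH_p}_{\delt} = P_{p,t}\nabla^L_{\delt^H}P_{p,t}$. Since in this tautological situation $E=\C$ and $\nabla^{L^p}_{v^H}$ agrees up to the zeroth-order adjustments with the horizontal derivative, the difference $K_p(\delt) = \nabla^p_{\delt} - \nabla^{\HH_p}_{\delt}$ is, after sandwiching by $P_{p,t}$, equal to
\begin{equation*}
P_{p,t}\left[\frac{1}{4p+2q}\Big(\Delta^{L^p}_{G(\delt)} - \nabla^{L^p}_{i_{G(\delt)}d\rho} + 2p\,\partial\rho.\delt^H\Big)\right]P_{p,t} + P_{p,t}\big(\nabla^{L^p}_{\delt^H} - \nabla^L_{\delt^H}\big)P_{p,t},
\end{equation*}
where the last term vanishes because $\nabla^{L^p}$ restricted to the fibre differs from $\nabla^L_{\delt^H}$ only by a curvature term $R^L(\delt^H,\cdot)$ which is zero by \cref{RL(H,v)=0}, and in the tautological case the horizontal part of the connection is flat along $\gamma$.

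The key step is then to verify that each of the three summands, pre- and post-composed with the Bergman projection, satisfies the criterion of \cref{criterion}, i.e.\ that its Schwartz kernel admits an expansion of the form \cref{criterionexp} in the local model of \cref{locmod}. For the term $2p\,\partial\rho.\delt^H$, after dividing by $4p+2q = 4p(1 + O(p^{-1}))$, the leading contribution is $\frac{1}{2}\partial\rho(\delt^H)$, which is a fibrewise endomorphism (a function), so $P_{p,t}\big(\tfrac{1}{2}\partial\rho.\delt^H\big)P_{p,t}$ is manifestly a Toeplitz operator of the form \cref{BTdef} with leading symbol $\tfrac12\partial\rho(\delt^H)$ by \cref{Toepasy}. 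For the first-order term $\nabla^{L^p}_{i_{G(\delt)}d\rho}$, dividing by $4p+2q$ gives a factor $O(p^{-1})$ in front of a first-order operator; one shows using \cref{asy} (the near-diagonal expansion of the Bergman kernel) that $P_{p,t}\nabla^{E_p}_v P_{p,t}$ has a kernel expansion of the type \cref{criterionexp} with a shift of one half-order, so this term contributes only at order $p^{-1}$ to $K_p$ and so does not affect $\sigma_0$. The genuinely delicate term is $\frac{1}{4p+2q}\Delta^{L^p}_{G(\delt)}$: this is a second-order fibrewise operator, and a priori $\Delta^{L^p}$ scales like $p$, so $\frac{1}{4p}\Delta^{L^p}_{G(\delt)}$ is $O(1)$ and could contribute to the leading symbol $\sigma_0$. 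The main obstacle is therefore to show that, when sandwiched between Bergman projections, $P_{p,t}\Delta^{L^p}_{G(\delt)}P_{p,t}$ is of lower order than naively expected — concretely, that its leading symbol vanishes or cancels — so that it contributes nothing to $\sigma_0$.

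The resolution of this obstacle is the rigidity hypothesis $\nabla^{T^{(1,0)}X}G(v)=0$ together with the holomorphy statement of \cref{hitchin}: because $\nabla^p_v$ preserves holomorphicity in the fibres, the operator $\frac{1}{4p+2q}\Delta^{L^p}_{G(\delt)}$ maps holomorphic sections into a space where its leading behaviour is controlled — intuitively, $\Delta^{L^p}_{G(v)}s$ for $s$ holomorphic involves $\nabla^{0,1}$-derivatives of $G(v)$, which vanish by rigidity, so the effective order drops. Concretely, I would compute the near-diagonal expansion of the kernel of $P_{p,t}\Delta^{L^p}_{G(\delt)}P_{p,t}$ using \cref{asy}: the operator $\Delta^{L^p}_{w_1\otimes w_2}$ applied to the model Bergman kernel $\PP_x(\sqrt p Z,\sqrt p Z')$ with $w_1,w_2\in T^{(1,0)}X$ produces terms that, after projecting onto the holomorphic model space, reduce to a zeroth-order contribution of order $O(p^{-1})$ relative to $p^{-n}P_p$. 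Hence $\frac{1}{4p+2q}P_{p,t}\Delta^{L^p}_{G(\delt)}P_{p,t} = O(p^{-1})$ in the sense of \cref{genToepdef}, so it does not contribute to $\sigma_0$. Combining the three computations via \cref{criterion}, we conclude that $\{K_p\}$ is a $1$-form valued Toeplitz operator with leading symbol $\sigma_0 = \tfrac{1}{2}\partial\rho$, which is exactly \cref{nu=f/2}. The final write-up needs to be careful only about the bookkeeping of orders in the half-integer expansion \cref{notcong}, and about invoking the uniformity in $t\in[0,1]$ of all the estimates, which is guaranteed by the conventions set up in \cref{famberg}.
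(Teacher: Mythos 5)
Your overall structure is sound and you correctly identify the central difficulty, namely that $\tfrac{1}{4p+2q}\Delta^{L^p}_{G(v)}$ is a priori $O(1)$ because a second-order fibrewise operator scales like $p$ against the Bergman kernel; and your final answer $\sigma_0=\tfrac12\partial\rho$ is correct. However, the mechanism you invoke to kill the Laplacian contribution is not the right one, and this is a genuine gap. You argue that ``$\Delta^{L^p}_{G(v)}s$ for $s$ holomorphic involves $\nabla^{(0,1)}$-derivatives of $G(v)$, which vanish by rigidity,'' but $\Delta^{L^p}_{G(v)}s$ does not involve $\nabla^{(0,1)}G(v)$ at all — it is a genuine second-order holomorphic differential operator applied to $s$. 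Rigidity and the holomorphicity statement of \cref{hitchin} are needed so that $\nabla^p_v$ preserves $\HH_{p}$ and hence $\nabla^p_v=P_p\nabla^p_vP_p$ on quantum sections, but they do not by themselves explain why $P_p\Delta^{L^p}_{G(v)}P_p$ drops two orders.

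The correct mechanism, and the one the paper uses, is a Tuynman-type integration-by-parts identity: since $G(v)$ is a section of $T^{(1,0)}X\otimes T^{(1,0)}X$, when you pair $\Delta^{L^p}_{G(v)}s'$ with a holomorphic $s$ and move the $(1,0)$-derivatives across the pairing, each adjoint $(\nabla^{L^p}_{w})^*$ produces a $(0,1)$-derivative that annihilates the holomorphic sections plus a zeroth-order divergence term; after two such integrations by parts no curvature $R^{L^p}=pR^L$ appears, and one obtains the exact, $p$-independent identities $P_p\Delta^{L^p}_{G(v)}P_p=P_pgP_p$ and $P_p\nabla^{L^p}_{i_{G(v)}d\rho}P_p=P_phP_p$ for $g,h\in\cinf(M,\C)$. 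With these in hand, the prefactor expansion $\tfrac{1}{4p+2q}=\tfrac{1}{4p}(1-\tfrac{q}{2p}+\cdots)$ and $\tfrac{2p}{4p+2q}=\tfrac12(1-\tfrac{q}{2p}+\cdots)$ immediately give the Toeplitz connection structure with $\sigma_0=\tfrac12\partial\rho$, without any analysis of the near-diagonal Bergman kernel. Note also that your treatment of the first-order term is imprecise: you claim a ``half-order shift'' for $P_p\nabla^{E_p}_vP_p$, but if that meant $O(p^{1/2})$ then dividing by $4p+2q$ would only give $O(p^{-1/2})$; what actually holds, again by the same integration by parts for the $(1,0)$-vector $i_{G(v)}d\rho$, is that $P_p\nabla^{L^p}_{i_{G(v)}d\rho}P_p$ is $O(1)$ with a $p$-independent symbol, hence $O(p^{-1})$ after the prefactor. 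I would recommend replacing your asymptotic kernel analysis of these two terms by the exact Tuynman-type identities, which is both cleaner and makes the $p$-bookkeeping trivial.
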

\begin{proof}
By \cref{hitchin}, the differential operator
$\nabla^p_v$ defined in \cref{hitchinfla} for any $v\in\cinf(B,TB)$
and $p\in\N^*$ preserves the fibrewise holomorphic sections, so that
\cref{hitchinfla} rewrites
\begin{equation}\label{hitchintoepfla}
\begin{split}
\nabla^p_v&=P_p\left[\nabla^{L_p}_{v^H}+\frac{1}{4p+2k}
\left(\Delta^{L^p}_{G(v)}-\nabla^{L^p}_{i_{G(v)}d\rho}
+2p\,\partial\rho.v^H\right)\right]P_p\\
&=\nabla^{\HH_p}_v+\frac{1}{4p+2k}P_p
\left(\Delta^{L^p}_{G(v)}-\nabla^{L^p}_{i_{G(v)}d\rho}
+2p\,\partial\rho.v^H\right)P_p
\end{split}
\end{equation}
Now by straightforward computations in the spirit of
\cite[Th.2.1]{Tuy87},
there exist functions $g,\,h\in\cinf(M,\C)$ such that
\begin{equation}\label{Tuy}
\begin{split}
P_p\Delta^{L^p}_{G(v)}P_p&=P_p g P_p,\\
P_p\nabla^{L^p}_{i_{G(v)}d\rho}P_p&=P_p h P_p.
\end{split}
\end{equation}
We can then take the expansion in $p\in\N^*$ of the multiplicative
constant in front of the second term of \cref{hitchintoepfla}
to get the result by \cref{connToepexp}.
\end{proof}
Let us now describe a variant of \cref{hitchin} including a square
root of $K_X$, which is originally due to 
\cite[\S\,3]{SS95}, and which in this generality
is due to \cite[Th.1.2]{AGL12}.
%
%
%
%
%
%

\begin{theorem}\label{metahitchin}
Under the assumptions of \cref{hitchin}, with $c_1(TX)$ even and
taking $E=K_X^{1/2}$,
there exists a $1$-form $\beta\in\cinf(M,T^*M_\C)$ such that for any
$p\in\N^*$ and $v\in\cinf(B,TB)$, the differential operator
$\nabla^p_v$
acting on $\cinf(M,L^p\otimes K_X^{1/2})$ defined by
\begin{equation}\label{metahitchinfla}
\nabla^p_v=\nabla^{E_p}_{v^H}+\frac{1}{4p}\left(
\Delta^{E_p}_{G(v)}+\beta(v^H)\right),
\end{equation}
preserves holomorphicity in the fibres, for any
$v\in\cinf(B,TB)$ and $p\in\N^*$, and thus induces by restriction
a family of connections $\{\nabla^p\}_{p\in\N^*}$ on the 
quantum bundle $\{\HH_p\}_{p\in\N^*}$ over $B$.
\end{theorem}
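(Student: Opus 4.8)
The plan is to adapt the proof of \cref{hitchin} along the lines of \cite{ADW91, SS95, AGL12}, reducing the statement to the fact that the operators $\nabla^p_v$ preserve fibrewise holomorphicity, and then to an explicit fibrewise curvature computation. As in the discussion preceding \cref{hol}, $\cinf(M, E_p)$ is the space of smooth sections of the infinite-dimensional bundle over $B$ whose fibre at $b$ is $\cinf(X_b, E_{p,b})$, and since the fibration is holomorphic, a section $s$ lies in $\cinf(B, \HH_p)$ precisely when $\dbar_b s_b = 0$ for all $b$, where $\dbar_b$ is the Dolbeault operator of $(X_b, J_b)$ twisted by $E_{p,b}$. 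For any $1$-form $\beta$, the correction term $u^p(v) := \tfrac{1}{4p}\big(\Delta^{E_p}_{G(v)} + \beta(v^H)\big)$ is a fibrewise differential operator depending $\cinf(B)$-linearly on $v \in \cinf(B, TB)$ — because $G$ is a section of $\pi^* T^*B \otimes T^{(1,0)}X^{\otimes 2}$ and $v \mapsto v^H$ is $\cinf(B)$-linear — so $\nabla^p_v = \nabla^{E_p}_{v^H} + u^p(v)$ automatically satisfies the Leibniz rule. Hence it suffices to produce $\beta \in \cinf(M, T^*M_\C)$, independent of $p$, such that $\dbar_b(\nabla^p_v s) = 0$ whenever $s \in \cinf(B, \HH_p)$; then $\nabla^p_v$ restricts to an operator on $\cinf(B, \HH_p)$ and $\{\nabla^p\}_{p\in\N^*}$ is a family of connections on $\{\HH_p\}_{p\in\N^*}$.

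I would first record two computations. (i) Differentiating the identity $\dbar_b s_b = 0$ in the direction of the horizontal lift $v^H$ introduces the variation $L_vJ$ of the relative complex structure; using the definition \cref{defG} of $G$ and the prequantization condition \cref{preq} (so that $R^{L^p} = pR^L$ is proportional to $p\,\om$), this yields, for $s$ fibrewise holomorphic, an identity of the form $\dbar_b(\nabla^{E_p}_{v^H}s) = -D_v s$, where $D_v$ is an explicit \emph{first}-order fibrewise operator built from $G(v)$, $\nabla^{1,0}s$ and $R^{L^p}$; this is exactly the step underlying \cref{hitchin}, which I would quote from \cite[Th.1]{And12}. (ii) For the second-order correction, I would compute $\dbar_b(\Delta^{E_p}_{G(v)}s)$. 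Here rigidity (\cref{rigid}) is used to commute $\dbar_b$ past $G(v)$, so the derivatives fall only on $\nabla^{E_p}s$ and on the connection \cref{nab10=Pnab}; commuting $\dbar_b$ through $\nabla^{1,0}$ produces the curvatures $R^{L^p}$, $R^{K_X^{1/2}} = \tfrac12 R^{K_X}$ and the curvature of $T^{(1,0)}X$, together with the Ricci curvature of the fibres (note that on each fibre $\nabla^{T^{(1,0)}X}$ restricts to the Chern connection, since the fibres are Kähler).

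The third step is to combine (i) and (ii). All the terms proportional to $p$ cancel between $\tfrac{1}{4p}\Delta^{E_p}_{G(v)}s$ and $\nabla^{E_p}_{v^H}s$ — this is what forces the coefficient $4p$ in \cref{metahitchinfla}. Unlike in \cref{hitchin}, where the remaining Ricci-type terms are absorbed using \cref{ric=om+rho}, the metaplectic twist by $K_X^{1/2}$ arranges that the curvature of $K_X^{1/2}$ cancels the Ricci contribution of $\nabla^{T^{(1,0)}X}$ up to a first-order remainder; what is left of $\dbar_b(\nabla^p_v s)$ is then a fibrewise first-order operator applied to $s$, independent of $p$. Because the leftover terms are tensorial in $s$ (again by rigidity) and $\cinf(B)$-linear in $v$, they assemble into multiplication by a globally defined $1$-form: this defines $\beta \in \cinf(M, T^*M_\C)$, with $\dbar_b(\nabla^p_v s) = 0$ by construction. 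This is precisely the content of \cite[Th.1.2]{AGL12} (and of \cite[\S3]{SS95} when $\dim B = 1$), from which the explicit formula for $\beta$ can be taken. Constancy of $\dim \HH_{p,b}$ in $b$ then follows, exactly as after \cref{hitchin}, using parallel transport with respect to $\nabla^p$; simple connectedness of $X$ enters only to make $K_X^{1/2}$ a well-defined line bundle once $c_1(TX)$ is even.

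The main obstacle is the curvature bookkeeping in steps (ii)--(iii): one must keep track simultaneously of the variation of $J$, the curvature of $T^{(1,0)}X$ for the connection \cref{nab10=Pnab}, the curvature of $K_X^{1/2}$, the curvature $R^{L^p}$, and the torsion of $\nabla^{TX}$ relative to the fibrewise Chern connections, and verify that after invoking rigidity all contributions not proportional to $p$ collapse to a single $p$-independent $1$-form rather than a genuine second-order operator. I would organize the argument so that this verification is reduced to citing the computation of \cite{AGL12}.
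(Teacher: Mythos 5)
Your proposal is sound and matches the paper's treatment: the paper offers no independent proof of this theorem, attributing it to \cite[\S\,3]{SS95} and \cite[Th.1.2]{AGL12}, and your argument likewise reduces the essential curvature cancellation to citing \cite[Th.1.2]{AGL12}. The preliminary reductions you supply (the Leibniz rule from $\cinf(B)$-linearity of the correction term, and the reduction to preservation of fibrewise holomorphicity) are correct and consistent with how \cref{hitchin} is handled.
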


We then have the following analogue of \cref{hitchintoep}, whose proof is strictly analogous.

\begin{lem}\label{hitchintoepmeta}
Under the hypotheses of \cref{metahitchin},
the family of connections $\{\nabla^p\}_{p\in\N^*}$ defined by
\cref{metahitchinfla} is a Toeplitz
connection.
Furthermore, its first coefficient in \cref{connToepexp}
satisfies $\sigma_0=0$.
\end{lem}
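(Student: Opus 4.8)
The plan is to mimic the proof of \cref{hitchintoep} step by step; the only genuine analytic input is a Tuynman-type identity that already appears there. First I would observe that, by \cref{metahitchin}, for each $v\in\cinf(B,TB)$ and all $p$ large the operator $\nabla^p_v$ of \cref{metahitchinfla} preserves fibrewise holomorphicity, hence maps $\cinf(B,\HH_p)$ to itself. Therefore $\nabla^p_v=P_p\nabla^p_v P_p$, and since $P_p\nabla^{E_p}_{v^H}P_p=\nabla^{\HH_p}_v$ by \cref{connectionL2}, formula \cref{metahitchinfla} rewrites as
\[
\nabla^p_v=\nabla^{\HH_p}_v+\frac{1}{4p}\,P_p\big(\Delta^{E_p}_{G(v)}+\beta(v^H)\big)P_p .
\]

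Next I would invoke, exactly as in the proof of \cref{hitchintoep}, the fact that conjugating the second-order fibrewise operator $\Delta^{E_p}_{G(v)}$ by $P_p$ produces a Berezin--Toeplitz operator. Concretely, a Tuynman-type integration by parts in the fibres (cf. \cite[Th.2.1]{Tuy87}, \cite[\S\,4.b]{ADW91}, \cite[Th.2.1]{MZ07}), using that $G(v)$ is a section of $T^{(1,0)}X\otimes T^{(1,0)}X$ and that one acts on fibrewise holomorphic sections, yields a section $g\in\cinf(M,\End(E)\otimes T^*M)$, independent of $p$, with $P_p\Delta^{E_p}_{G(v)}P_p=P_p\,g(v^H)\,P_p$ for all $v\in\cinf(B,TB)$ and $p$ large; the metaplectic twist $E=K_X^{1/2}$ plays no role here, since only the holomorphic structure in the fibres enters. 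Since $\beta(v^H)\in\cinf(M,\C)$ acts by pointwise multiplication, I would then set
\[
K_p(v)=\nabla^p_v-\nabla^{\HH_p}_v=p^{-1}\,P_p\,\tfrac14\big(g(v^H)+\beta(v^H)\big)\,P_p ,
\]
which, viewing $\beta$ as an $\End(E)$-valued $1$-form via $\End(E)\simeq\C$, exhibits $\{K_p\}_{p\in\N^*}$ as a $1$-form valued Berezin--Toeplitz operator satisfying \cref{connToepexp} with $\sigma_1=\tfrac14(g+\beta)$ and $\sigma_l=0$ for $l\neq 1$; in particular $\sigma_0=0$.

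Finally I would explain why $\sigma_0$ vanishes here while it equals $\tfrac12\partial\rho$ in \cref{hitchintoep}: there the order-$p^0$ contribution comes from the summand $2p\,\partial\rho.v^H$ in \cref{hitchinfla} divided by $4p+2q$, whereas \cref{metahitchinfla} carries the uniform prefactor $\frac{1}{4p}$ and contains no summand linear in $p$, so the whole correction $K_p(v)$ is $O(p^{-1})$. The main (indeed only) obstacle is the Tuynman-type identity for $P_p\Delta^{E_p}_{G(v)}P_p$; but since this is precisely the step already carried out in the proof of \cref{hitchintoep}, I would simply cite it rather than reprove it, which is what makes the present proof ``strictly analogous''.
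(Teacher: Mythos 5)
Your proposal is correct and takes essentially the same approach as the paper: the paper's own proof of this lemma consists of the single remark that it is ``strictly analogous'' to \cref{hitchintoep}, and your argument is precisely that analogy carried out — rewriting \cref{metahitchinfla} through $P_p$, invoking the Tuynman-type identity for $P_p\Delta^{E_p}_{G(v)}P_p$, and observing that the uniform $\tfrac{1}{4p}$ prefactor, with no summand linear in $p$, forces $\sigma_0=0$.
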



Consider now a diffeomorphism $\varphi:X\fl X$ lifting to a bundle
map $\varphi:L\fl L$ preserving metric and connection,
and assume that the induced action
of $\varphi$ on the space of compatible complex structures preserves
$B$.
%
%
%
Then by \cref{hitchintoep}, if $\varphi$
has non-degenerate fixed point set, we can apply
\cref{indeq} to the situation of \cref{hitchin}.
We recover in this way the
result of \cite[Th.1.2]{AP18}, and moreover, we give an explicit
formula for the first coefficient.
On the other hand, by \cref{hitchintoepmeta} we can apply
\cref{indeq} to the situation of \cref{metahitchin},
under the additional hypothesis that there exists a lift of
$\varphi$ to $K_X^{1/2}$ squaring to $\varphi^{K_X}$.
In the case $\dim X^\varphi=0$, this is
the result of \cite[Th.6.3,\,Th.6.4]{Cha16}, with
the corresponding formula \cref{aj02} for the first coefficient.

\subsection{Witten's aymptotic conjecture}\label{Witsec}
%
%
In this section, we explain how the results of \cref{locsec} 
apply to
Witten's asymptotic expansion conjecture
for quantum representation of the mapping class
group described in the introduction.

Let $\Sigma$ be an oriented compact surface of genus $g\geq 2$,
and let $P$ be the trivial $\SU(m)$-principal bundle over $\Sigma$,
with $m\geq 2$.
Let $D$ be a disk inside $\Sigma$, and choose $d\in(\Z/m\Z)^*$.
We consider the space $\cal{A}$
of flat connections on $P$
over $\Sigma\backslash D$ with
holonomy along its boundary equal to
$e^{\frac{2\pi\sqrt{-1}d}{m}}\,\Id_{\C^m}\in\SU(m)$.
The gauge group $\Aut(P)$ of automorphisms of $P$ acts
naturally on $\AA$, and it is a basic fact that
all $A\in\AA$ are \emph{irreducible}, meaning that the stabiliser
of $A$ in $\Aut(P)$ is the center of $\Aut(P)$, which identifies with
the center of $\SU(m)$.

Let $\AA'\subset\AA$ be the connections
with some fixed standard form in a neighbourhood of the boundary
of $\Sigma\backslash D$, and note that any $[A]\in\AA/\Aut(P)$ has a representative in $\AA'$. If $t\mapsto A_t\in\AA'$
is smooth in $t\in\R$,
we can extend $\frac{d}{dt}\big|_{t=0} A_t$ by $0$
over $D$ and consider it as an element of
$\Om^1(\Sigma,\mathfrak{su}(m))$.
Any $A\in\AA'$ induces a flat connection on the trivial adjoint bundle
$\ad P$ over $\Sigma\backslash D$
with trivial holonomy along the boundary,
so that it extends to a covariant derivative $d_A$
on $\Om^\bullet(\Sigma,\mathfrak{su}(m))$ satisfying $d_A^2=0$. Let
$H^\bullet_A(\Sigma)$ denote its cohomology.
The following result is classical and is essentially due to
\cite[p.\,587,\,Th.9.12,\,\S\,14]{AB83} and
\cite[\S\,1.2,\,\S\,1.8]{Gol84}.
\begin{prop}\label{MM}
The quotient $\MM=\AA/\Aut(P)$ is simply connected and
has a natural structure of a smooth compact manifold.
For any $A\in\cal{A}'$, there is a natural
isomorphism
\begin{equation}\label{tgtMM}
T_{[A]}\MM\simeq H^1_A(\Sigma),
\end{equation}
sending the tangent vector at $t=0$ of a smooth curve
$t\mapsto A_t\in\AA'$ with $A_0=A$ to the cohomology
class of $\frac{d}{dt}\big|_{t=0} A_t\in\Om^1(\Sigma,\mathfrak{su}(m))$.
Furthermore, for any $\alpha,\,\beta\in\Om^1(\Sigma,\mathfrak{su}(m))$
with $d_A\alpha=d_A\beta=0$, the formula
\begin{equation}\label{omAB}
\om^\MM([\alpha],[\beta])=
-\frac{m}{4\pi^2}\int_\Sigma\Tr(\alpha\wedge\beta)
\end{equation}
defines a symplectic form $\om^\MM$ on $\MM$ via \cref{tgtMM}.
\end{prop}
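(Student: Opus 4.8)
The plan is to verify the three assertions of \cref{MM} in turn, relying on the standard Atiyah--Bott and Goldman description of moduli spaces of flat connections on surfaces, adapted to the punctured/parabolic setting coming from the disk $D$ and the fixed boundary holonomy $e^{2\pi\sqrt{-1}d/m}\,\Id$.

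\medskip
\textbf{Smoothness and compactness of $\MM$.}
First I would note that, since $\gcd(d,m)=1$, every flat connection $A\in\AA$ is irreducible, as recalled above from \cite[\S\,8.1]{Cha16}: its stabiliser in the gauge group is exactly the centre of $\SU(m)$. Working with the Sobolev completions $\AA_{k}$ and gauge group $\Aut(P)_{k+1}$ for $k$ large, the slice theorem (as in \cite[\S\,14]{AB83}) shows that the action of $\Aut(P)_{k+1}/Z(\SU(m))$ on the space of irreducible flat connections with the prescribed boundary holonomy is free and proper, with local slices given by $\Ker(d_A^{*})\cap\Ker(d_A)$, so the quotient $\MM=\AA/\Aut(P)$ is a smooth Banach manifold; elliptic regularity identifies it with the $C^{\infty}$-moduli space and makes it finite-dimensional, of dimension $(2g-1)(m^{2}-1)+1$ by an index computation. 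Compactness follows from Uhlenbeck-type compactness for flat connections with fixed boundary holonomy on a surface, or more simply because the representation-variety description realises $\MM$ as a quotient of a compact real-algebraic set (tuples of $\SU(m)$-matrices satisfying one relation with a fixed central value) by the compact group $\SU(m)/Z$. Simple connectedness of $\MM$ is classical: it follows from the fact that the relevant representation variety is connected and simply connected, as in \cite[Th.9.12]{AB83}, using $\pi_{1}(\SU(m))=0$.

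\medskip
\textbf{The tangent space identification \cref{tgtMM}.}
For $A\in\AA'$, the deformation complex is $\Om^{0}(\Sigma,\mathfrak{su}(m))\xrightarrow{d_{A}}\Om^{1}(\Sigma,\mathfrak{su}(m))\xrightarrow{d_{A}}\Om^{2}(\Sigma,\mathfrak{su}(m))$, where the extension over $D$ is legitimate precisely because $A\in\AA'$ has trivial holonomy on $\partial(\Sigma\setminus D)$, so $\ad P$ extends flatly across $D$ and $d_{A}^{2}=0$ on all of $\Sigma$. The tangent space to $\AA'$ at $A$ modulo the infinitesimal gauge action $d_{A}\Om^{0}$ is $\Ker d_{A}/\im d_{A}=H^{1}_{A}(\Sigma)$; irreducibility gives $H^{0}_{A}(\Sigma)=0$ and Poincaré duality gives $H^{2}_{A}(\Sigma)=0$, so this is the full tangent space $T_{[A]}\MM$. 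The map sending the velocity of a curve $t\mapsto A_{t}$ to $[\tfrac{d}{dt}\big|_{0}A_{t}]$ is the canonical chart identification, and I would just check it is well-defined (independent of the representative curve and of the representative in $\AA'$) by the slice theorem. I expect this step to be essentially bookkeeping.

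\medskip
\textbf{The symplectic form \cref{omAB}.}
The bilinear form $([\alpha],[\beta])\mapsto -\tfrac{m}{4\pi^{2}}\int_{\Sigma}\Tr(\alpha\wedge\beta)$ is the Atiyah--Bott--Goldman form up to the normalising constant $-\tfrac{m}{4\pi^{2}}$ chosen to match the prequantisation condition \cref{preq}. I would check: (i) it descends to cohomology, since if $\alpha=d_{A}\xi$ then $\int_{\Sigma}\Tr(d_{A}\xi\wedge\beta)=\pm\int_{\Sigma}\Tr(\xi\wedge d_{A}\beta)=0$ by Stokes (no boundary term, as everything extends over the closed surface $\Sigma$) using $d_{A}\beta=0$; (ii) antisymmetry, from $\Tr(\alpha\wedge\beta)=-\Tr(\beta\wedge\alpha)$ for $\mathfrak{su}(m)$-valued $1$-forms together with the sign from swapping $1$-forms; (iii) non-degeneracy, which is exactly Poincaré duality $H^{1}_{A}(\Sigma)\times H^{1}_{A}(\Sigma)\to H^{2}_{A}(\Sigma)\xrightarrow{\int}\R$ being a perfect pairing, valid because $\ad P$ carries the invariant form $\Tr$; and (iv) closedness, which I would deduce from Goldman's theorem \cite[\S\,1.2]{Gol84} that this form is the pullback of the natural symplectic form on the representation variety, or equivalently from the fact that $\om^{\MM}$ agrees with the symplectic reduction of the (formally closed) Atiyah--Bott form on $\AA$ by the gauge group. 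The main obstacle, and the only point requiring care, is the closedness of $\om^{\MM}$: giving a self-contained argument means either invoking Goldman's cup-product formula for $d\om^{\MM}$ and the associativity of cup product, or carrying out the Marsden--Weinstein reduction argument; I would cite \cite{Gol84} and \cite{AB83} and only sketch why the constant $-\tfrac{m}{4\pi^{2}}$ is the one making $\om^{\MM}$ the curvature of the canonical prequantum line bundle, as needed for the subsequent application to the Verlinde bundle.
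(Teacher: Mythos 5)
Your proposal is correct and follows exactly the classical Atiyah--Bott/Goldman route (slice theorem plus irreducibility for smoothness, the deformation complex with $H^0_A=H^2_A=0$ for \cref{tgtMM}, and Poincar\'e duality plus Goldman's closedness argument for \cref{omAB}), which is precisely what the paper relies on by citing \cite{AB83} and \cite{Gol84} without giving a proof. The only slip is the asserted dimension $(2g-1)(m^2-1)+1$, which should be $(2g-2)(m^2-1)$ (your formula is not even always even), but this plays no role in the statement being proved.
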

Note that the symplectic form \cref{omAB} is $m$ times the
canonical symplectic form of $\MM$, as defined in \cite[p.587]{AB83}.

Let $f\in\Diff^+(\Sigma,D)$ be an orientation
preserving diffeomorphism of $\Sigma$ fixing $D$ pointwise,
inducing a diffeomorphism 
\begin{equation}\label{varphif}
\begin{split}
\varphi:\MM&\longrightarrow\MM\\
[A]&\longmapsto[f^*A]
\end{split}
\end{equation}
preserving \cref{omAB}.
The following result is classical and can be deduced from
 \cite[\S\,4.2]{BL99} and
\cite[\S\,3.3,\,\S 4.3]{MW01}.

\begin{prop}\label{LCS}
There is a natural Hermitian line bundle
with connection $(L,h^{L},\nabla^{L})$ over $(\MM,\om^\MM)$ whose
curvature satisfies the prequantization condition \cref{preq}, and
a natural lift $\varphi^L$
of \cref{varphif} to $L$ preserving metric and connection.
\end{prop}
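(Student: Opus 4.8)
The plan is to realize $L$ as the descent to $\MM=\AA/\Aut(P)$ of the \emph{Chern--Simons line bundle} over the space of connections, following \cite{MW01} and \cite{BL99}. Recall that $\AA'\subset\AA$ is an affine space modelled on the space of those $\alpha\in\Om^1(\Sigma\backslash D,\mathfrak{su}(m))$ having a fixed standard form near the boundary, and that by \eqref{omAB} the symplectic form $\om^\MM$ pulls back to the constant $2$-form $(\alpha,\beta)\mapsto -\frac{m}{4\pi^2}\int_\Sigma\Tr(\alpha\wedge\beta)$ on $\AA'$. First I would equip the trivial line bundle $\underline{\C}\fl\AA'$ with the standard Hermitian metric on $\C$ and with the connection $\nabla=d+2\pi\i\,\theta$, where $\theta$ is the explicit primitive $\theta_A(a)=-\frac{m}{4\pi^2}\int_\Sigma\Tr(A\wedge a)$; then the curvature of $\nabla$ equals $2\pi\i$ times the pullback of $\om^\MM$, so that \eqref{preq} holds over $\AA'$, and since $\theta$ is real-valued $\nabla$ is unitary for this metric.

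Next, let $\GG\subseteq\Aut(P)$ be the subgroup of gauge transformations that are constant near the boundary of $\Sigma\backslash D$, so that $\GG$ acts on $\AA'$ with orbit space $\MM$. I would lift this action to $\underline{\C}\fl\AA'$ via the cocycle $\Theta(g,A)=\exp\big(2\pi\i\,m\,\mathrm{WZ}(g,A)\big)$, where $\mathrm{WZ}(g,A)$ is the Wess--Zumino term attached to the Chern--Simons form of the pair $(A,g\cdot A)$, evaluated after extension over a $3$-manifold bounding $\Sigma$; since $g$ is constant near the boundary, this term is well defined modulo $\frac{1}{m}\Z$, so multiplication by the level $m$ turns $\Theta$ into a genuine $\mathrm{U}(1)$-valued cocycle on $\GG\ltimes\AA'$, and one checks directly that the lifted action preserves $\nabla$ and the Hermitian metric.

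The step I expect to be the main obstacle is showing that this lifted action \emph{descends} to the quotient: this forces the center $Z(\SU(m))\subset\Aut(P)$, which by \cref{MM} (irreducibility) is exactly the common stabiliser of all $A\in\AA'$, to act trivially on $\underline{\C}$. Evaluating $\Theta$ on a central gauge transformation produces a constant phase built out of the central element and the prescribed boundary holonomy $e^{2\pi\i d/m}\,\Id_{\C^m}$, and it is precisely the hypothesis $d\in(\Z/m\Z)^*$ together with this fractional holonomy that makes the phase trivial. Granting this, $\underline{\C}$ descends to the sought $(L,h^L,\nabla^L)$ over $\MM$, with $\om^\MM=\frac{\i}{2\pi}R^L$ by construction.

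Finally, for the lift $\til\varphi^L$ of $\varphi$: the diffeomorphism $f\in\Diff^+(\Sigma,D)$ acts on $\AA'$ by pullback $A\mapsto f^*A$, and since $f$ fixes $D$ pointwise it preserves the standard boundary form, the primitive $\theta$ and the Hermitian metric; hence it lifts to $\underline{\C}\fl\AA'$ by $(A,z)\mapsto(f^*A,z)$. Because $\mathrm{WZ}$ is natural under diffeomorphisms that are trivial near the boundary, this lift commutes with the cocycle $\Theta$, so it passes to the quotient as a bundle map $\til\varphi^L\colon L\fl L$ covering $\varphi$ and preserving $h^L$ and $\nabla^L$. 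All the verifications sketched here are carried out, in the quasi-Hamiltonian formalism, in \cite{MW01}, and in the determinant-line picture in \cite{BL99}.
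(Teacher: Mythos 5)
Your construction matches the one the paper invokes through its citations to \cite[\S\,3.3]{MW01} and \cite[\S\,4.2]{BL99}: $L$ is realized as the $m$-th power of the Chern--Simons line bundle, obtained by descending the trivial line over $\AA'$ via a level-$m$ Wess--Zumino cocycle, with $\til\varphi^L$ given by the trivial action on the $\C$-factor of $\AA\times\C$. The one imprecision is the claim that the hypothesis $d\in(\Z/m\Z)^*$ is what makes the central phase vanish: what actually kills that phase is the level-$m$ factor in your cocycle (equivalently, passing to the $m$-th power of the Chern--Simons bundle, which is exactly why the paper insists on that power), whereas $d\in(\Z/m\Z)^*$ serves only to guarantee irreducibility of every $A\in\AA$, hence smoothness of $\MM$, as recorded in \cref{MM}. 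Also beware of sign and normalization conventions in your primitive $\theta$: with $\nabla=d+2\pi\sqrt{-1}\,\theta$, the prequantization condition \cref{preq} forces $d\theta=-\om^{\MM}$, and the coefficient in your formula for $\theta_A$ is off by a factor of $2$ from what \cref{omAB} requires.
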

Note that this line bundle is the $m$-th power of the \emph{Chern-Simons
bundle} of \cite[\S\,3.3]{MW01}, which is only an orbifold line
bundle in general. The lift $\varphi^L$ is the one acting trivially on the
second summand of $\AA\times\C$ in the description of
this line bundle as a quotient given at the end of
\cite[\S\,3.3]{MW01}.

Recall that the group $\Diff_0(\Sigma)$ of diffeomorphisms isotopic
to the identity acts naturally on the space $\JJ_\Sigma$ of
complex structures on $\Sigma$. As explained for
example in \cite[\S\,3-4]{AC09},
the \emph{Teichmüller space}
$\cT_\Sigma:=\JJ_\Sigma/\Diff_0(\Sigma)$
has a natural structure of a complex manifold.
Following \cite[\S\,2]{Hit90}, we identify complex structures on $\Sigma$
with Hodge-star operators on $\Om^1(\Sigma)$, and write $*\in\JJ_\Sigma$.
Recall that  $\alpha\in\Om^1(\Sigma,\mathfrak{su}(m))$ satisfying
$d_A\alpha=*d_A\alpha=0$ is called \emph{harmonic}.
%
%
%
Recalling \cref{rigid}, the following result follows from
\cite[(2.6),\,(2.15)]{Hit90} and
\cite[\S\,2]{ADW91}.

\begin{prop}\label{preqMM}
The first projection $\pi:\cT_\Sigma\times\MM\fl\cT_\Sigma$
has a structure of a rigid holomorphic tautological fibration, with
associated line bundle
$(L,h^L,\nabla^L)$ as above. For any $*\in\JJ_\Sigma$
and $A\in\AA'$, the associated relative
complex structure $J\in\End(T\MM)$
is defined over $([*],[A])\in\cT_\Sigma\times\MM$
via \cref{tgtMM} and Hodge theory by
\begin{equation}\label{J=*}
J\alpha=-*\alpha,
\end{equation}
where $\alpha\in\Om^1(\Sigma,\mathfrak{su}(m))$ is the unique harmonic
representative of $[\alpha]\in H^1_A(\Sigma)$.
\end{prop}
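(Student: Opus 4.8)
The plan is to assemble \cref{preqMM} from the Hodge-theoretic description of $\MM$ in \cref{MM}--\cref{LCS} together with the structural results of Hitchin \cite{Hit90} and of Axelrod--Della Pietra--Witten \cite{ADW91}; no genuinely new computation is needed, so I will mainly organise the ingredients and indicate where the deep input enters. The tautological part is immediate: the total space is the product $\Tau_\Sigma\times\MM$, and by \cref{LCS} the bundle $(L,h^L,\nabla^L)$ lives over $\MM$ alone, hence is the pullback of itself by the second projection, which is exactly \cref{trivfibdef}. So the content is threefold: (i) the formula $J\alpha=-*\alpha$ on harmonic representatives defines, for each $([*],[A])$, a relative almost complex structure on $\MM$ compatible with $\om^\MM$, depending smoothly on $([*],[A])$; (ii) these fit together so that $\Tau_\Sigma\times\MM$ is a complex manifold with $\pi$ holomorphic, $L$ holomorphic Hermitian, and $\nabla^L$ its Chern connection; (iii) the fibration is rigid in the sense of \cref{rigid}.

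For (i), I would fix a metric on $\Sigma$ in the conformal class of $*$. Since every $A\in\AA$ is irreducible, the flat connection $d_A$ on the trivial $\mathfrak{su}(m)$-bundle over $\Sigma$ has $H^0_A(\Sigma)=H^2_A(\Sigma)=0$ (the latter by Poincaré duality), so Hodge theory provides, for each class in $H^1_A(\Sigma)\simeq T_{[A]}\MM$, a unique $d_A$-harmonic representative $\alpha$, characterised by $d_A\alpha=d_A(*\alpha)=0$. The operator $*$ sends such an $\alpha$ to a harmonic form and satisfies $*^2=-\Id$ on $1$-forms, so $J:=-*$ descends to an endomorphism of $H^1_A(\Sigma)$ with $J^2=-\Id$; smoothness in $([*],[A])$ follows from smoothness of the Hodge projection. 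Compatibility with $\om^\MM$ of \cref{omAB} comes from the fact that $*$ preserves the pointwise exterior product of two $1$-forms on a surface, giving $J$-invariance of $\om^\MM$, together with the definiteness of the $L^2$-pairing $(\alpha,\beta)\mapsto\int_\Sigma\Tr(\alpha\wedge*\beta)$ on harmonic forms, with the orientation and sign conventions normalised as in \cite[(2.15)]{Hit90}, which makes $\om^\MM(\cdot,J\cdot)$ definite. This is \cite[(2.15)]{Hit90}.

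For (ii) and (iii), I would invoke the Narasimhan--Seshadri correspondence in the twisted form adapted to the prescribed boundary holonomy $e^{\frac{2\sqrt{-1}\pi d}{m}}$ with $\gcd(m,d)=1$, as recalled in \cite[\S8.1]{Cha16}: $(\MM,J_{[*]})$ is biholomorphic to the moduli space of semistable holomorphic bundles of the corresponding type on the Riemann surface $(\Sigma,[*])$, which is a smooth projective variety, and letting the Riemann surface structure vary over Teichmüller space realises these as a holomorphic family, so $\Tau_\Sigma\times\MM$ acquires a complex structure for which $\pi$ is holomorphic, $L$ --- the $m$-th power of the Chern--Simons line bundle, i.e.\ a determinant line bundle --- is holomorphic over the total space, and $\nabla^L$ is its Chern connection; this is \cite[\S2]{ADW91}. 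For rigidity I would differentiate $J=-*$ along a Teichmüller direction $v$: by \cite[(2.6)]{Hit90} the Lie derivative $L_vJ$ has the form $2\pi(i_{G(v)}\om^\MM)$ with $G(v)\in\cinf(\MM,T^{(1,0)}\MM\otimes T^{(1,0)}\MM)$ as in \cref{defG}, and Hitchin's computation gives $\nabla^{T^{(1,0)}\MM}G(v)=0$, which is precisely \cref{rigid}.

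The main obstacle is (ii)--(iii): the Hodge-theoretic bookkeeping of (i) is routine, but the holomorphicity of the family of complex structures and the vanishing $\nabla^{T^{(1,0)}\MM}G(v)=0$ rest on the global structure theory of the moduli space and on Hitchin's identity for the variation of the Hodge star, which I would quote from \cite{Hit90} and \cite{ADW91}. The only point requiring care beyond the closed-surface treatment is the boundary bookkeeping on $\Sigma\setminus D$ --- ensuring irreducibility of all $A\in\AA$, so that $\MM$ is a genuine smooth manifold and the Hodge theory of step (i) applies --- which is handled exactly as in \cite[\S8.1]{Cha16}.
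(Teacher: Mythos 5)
Your proposal is correct and follows essentially the same route as the paper, which disposes of this proposition simply by citing \cite[(2.6),\,(2.15)]{Hit90}, \cite[\S\,2]{ADW91} (with the boundary bookkeeping handled as in \cite[\S\,8.1]{Cha16}). You expand the citation into a proof sketch --- Hodge theory for the relative almost complex structure $J=-*$ and its $\om^\MM$-compatibility, Narasimhan--Seshadri for holomorphicity of the family, and Hitchin's identity for rigidity --- but the sources and the logical structure are the same as what the paper leans on.
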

The associated relative Riemannian metric on the relative tangent
bundle $T\MM$ over $([*],[A])\in\cT_\Sigma\times\MM$ is then given
via \cref{tgtMM} and Hodge theory by
\begin{equation}\label{Hodgemet}
g^{T\MM}([\alpha],[\beta])=\frac{m}{4\pi^2}
\int_\Sigma\<\alpha,\beta\>_{T^*\Sigma}\,dv_\Sigma,
\end{equation}
where $\alpha,\,\beta\in\Om^1(\Sigma,\mathfrak{su}(m))$ are the unique
harmonic representative of
$[\alpha],\,[\beta]\in H^1_A(\Sigma)$, for
$\<\cdot,\cdot\>_{T^*\Sigma}$ and $dv_\Sigma$ induced by
any Riemannian metric $g^{T\Sigma}$ associated with
$*\in\JJ_\Sigma$, which we take to be the associated
hyperbolic metric.
%

For any $A\in\AA'$ and $*\in\JJ_\Sigma$, let $d_A=:\partial_A+\dbar_A$
be the decomposition of $d_A$ with respect
to the splitting \cref{splitc} of $(T\Sigma,*)$.
Consider the universal family of Riemann surfaces over $\cT_\Sigma$
as in \cite[Th.5.6]{AC09}, and give it the structure of a holomorphic
prequantized fibration as in \cite[\S\,5.1]{KMMW17}.
By the construction of \cite[Th.1.9]{BGS88c},
we can consider the
holomorphic \emph{determinant line bundle} $\det(\dbar_A)$
of the induced family of $\dbar$-operators over $\MM\times\cT_\Sigma$.
The following theorem is a consequence of the curvature formula
of \cite[Th.1.9]{BGS88c}.
It follows from \cite[\S\,2]{Hit90} and the computations of
\cite[(4.17)]{ADW91}.
\begin{prop}\label{hitMM}
The canonical bundle $K_\MM$ over $\MM\times\cT_\Sigma$
is isomorphic to the dual of the determinant line bundle
$\det(\dbar_A)$ considered above.
Furthermore, the Chern curvature of its natural Hermitian
metric $h^{K_\MM}$ induced by \cref{Hodgemet}
satisfies
\begin{equation}\label{FIT}
\frac{\sqrt{-1}}{2\pi}\big[-R^{K_\MM}+\dbar\partial \rho\big]^\MM
=2\om^\MM,
\end{equation}
where $\rho\in\cinf(\MM\times\cT_\Sigma,\R)$ is the
\emph{analytic torsion} of the associated family of
$\dbar$-operators induced by the hyperbolic metric
associated with $*\in\TT_\Sigma$.
\end{prop}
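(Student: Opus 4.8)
The plan is to realise $K_\MM$ as the (dual of the) determinant line of the family of $\dbar$-operators, to identify the induced Hermitian structures, and then to read off the curvature from the Bismut--Gillet--Soul\'e formula. First I would fix $*\in\JJ_\Sigma$ and $A\in\AA'$ and use \cref{preqMM}, \cref{J=*} and Hodge theory to identify the fibre $T^{(0,1)}_{[A]}\MM$ with the Dolbeault group $H^1_{\dbar_A}(\Sigma,\ad P\otimes\C)$ of the holomorphic bundle $\ad P\otimes\C$ over the closed Riemann surface $\Sigma_{[*]}$: the holonomy of $A$ around $\partial D$ is a scalar, hence trivial in the adjoint representation, so $\dbar_A$ extends across $D$, and under \cref{J=*} the $-\sqrt{-1}$-eigenspace of $J$ on harmonic $\ad P\otimes\C$-valued $1$-forms is exactly the space of harmonic $(0,1)$-forms. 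Since every $A\in\AA$ is irreducible, $\ker\dbar_A=H^0_{\dbar_A}(\Sigma,\ad P\otimes\C)=0$, so with the conventions of \cite[Th.1.9]{BGS88c} the determinant line of $\dbar_A$ equals $(\det\coker\dbar_A)^*\cong(\det T^{(0,1)}_{[A]}\MM)^*$, and its dimension is constant over $\MM\times\Tau_\Sigma$. Passing to the associated line bundles over $\MM\times\Tau_\Sigma$, and noting that this identification is complex analytic in $([*],[A])$, gives the asserted holomorphic isomorphism between $K_\MM$ and the dual of the determinant line bundle.

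Next I would compare Hermitian structures. Under this isomorphism $h^{K_\MM}$ induced by \cref{Hodgemet} corresponds to the (dual of the) $L^2$-metric on $\det H^1_{\dbar_A}$, whereas the determinant line bundle of \cite[Th.1.9]{BGS88c} carries the Quillen metric, which differs from the $L^2$-metric precisely by the Ray--Singer analytic torsion $\rho$ of the family of $\dbar_A$-operators. Since the Chern curvatures of two Hermitian metrics $h$ and $e^{-\phi}h$ on a holomorphic line bundle differ by $\dbar\partial\phi$, this yields
\begin{equation*}
-R^{K_\MM}=-R^{K_\MM}_{Q}+\dbar\partial\rho ,
\end{equation*}
where $R^{K_\MM}_Q$ denotes the curvature of the Quillen metric transported to $K_\MM$.

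Finally I would apply the curvature theorem of \cite[Th.1.9]{BGS88c}: $\frac{\sqrt{-1}}{2\pi}R^{K_\MM}_Q$ is the degree-two component of the fibre integral over $\Sigma$ of $\Td(T\Sigma)\ch(\ad P\otimes\C)$, computed with the metrics and connections in play; concretely it is a universal characteristic form on $\MM\times\Tau_\Sigma$ built from the curvature of the universal $\SU(m)$-connection over $\MM\times\mathcal{C}$, where $\mathcal{C}\to\Tau_\Sigma$ is the universal family of Riemann surfaces. Because $\ad P$ comes from the adjoint representation we have $c_1(\ad P\otimes\C)=0$, so only $\tfrac12\ch_2(\ad P\otimes\C)$ survives, and identifying its fibre integral along the $\MM$-directions with $2\,\om^\MM$ is exactly the computation of \cite[(4.17)]{ADW91} (compare \cite[\S\,2]{Hit90}); this is the step I expect to be the main obstacle. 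It amounts to the Atiyah--Bott description of $\om^\MM$ via $\int_\Sigma\Tr$, combined with $h^\vee(\SU(m))=m$ and the normalisation noted after \cref{MM} that $\om^\MM$ is $m$ times the canonical symplectic form, which together give $c_1(T\MM)=2[\om^\MM]$. Substituting the resulting identity $\frac{\sqrt{-1}}{2\pi}(-R^{K_\MM}_Q)=2\om^\MM$ along the $\MM$-directions into the previous display, and recalling $-R^{K_\MM}=\Tr R^{T^{(1,0)}\MM}$, produces \cref{FIT}.
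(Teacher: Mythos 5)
Your proposal is correct and follows essentially the same route as the paper, which simply cites the Bismut--Gillet--Soul\'e curvature formula for the Quillen metric together with Hitchin's identification $T^{(0,1)}_{[A]}\MM\simeq H^{0,1}_{\dbar_A}$ and the Chern--Weil computation of \cite[(4.17)]{ADW91}; your write-up correctly supplies the intermediate steps (vanishing of $H^0_A$ by irreducibility, Quillen versus $L^2$ metric differing by the analytic torsion, and the dual Coxeter number accounting for the factor $2$ against the normalisation \cref{omAB}). No gaps.
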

The metric $h^Q:=e^{-\rho} h^{K_\MM}$ on $K_\MM$ is called the
\emph{Quillen metric}, and the Chern connection
$\nabla^Q$ of $(K_\MM,h^Q)$ satisfies
\begin{equation}\label{nabQ}
\nabla^Q=\nabla^{K_\MM}-\partial\rho.
\end{equation}
For $*\in\JJ_\Sigma$ and $f\in\Diff^+(\Sigma,D)$ as above,
let $\gamma:[0,1]\fl\cT_\Sigma$ be a path joining $[*]$
to $[f^* *]$, and consider the pullback of the universal family
of Riemann surfaces over $\cT_\Sigma$ by $\gamma$.
This induces a holomorphic prequantized fibration
\begin{equation}\label{maptor}
\pi_f:\Sigma_f\simeq\Sigma\times[0,1]/[(0,x)\sim(1,f(x))]
\longrightarrow\R/\Z,
\end{equation}
and we call $\Sigma_f$ the \emph{mapping torus} of $f$.
We write $(\Sigma\backslash D)_f\subset\Sigma_f$ for the mapping torus
of $f|_{\Sigma\backslash D}$.

By \cref{MM,LCS,preqMM,hitMM},
we can apply \cref{hitchin} with $k=2$ and consider the parallel transport
with respect to the connection $\{\nabla^p\}_{p\in\N^*}$
of \cref{hitchinfla} in the quantum bundle $\{\HH_p\}_{p\in\N^*}$
associated with $\pi:\cT_\Sigma\times\MM\fl\cT_\Sigma$ along
$\gamma:[0,1]\fl\cT_\Sigma$. This is precisely the canonical
projectively flat connection of
\cite[\S\,4.b]{ADW91} on the \emph{Verlinde bundle}.
By \cref{hitchintoep}, we are then
precisely in the context
of \cref{locsec}, with $(X,\om)=(\MM,\om^\MM)$ and $\varphi:\MM\fl\MM$
defined in \cref{varphif}.
\begin{proof}[Proof of \cref{WRTintro} and \cref{WRTintro2}]

%
%
%
%
%
Let $\AA_f$ be the space of flat $\SU(m)$-connections over
$(\Sigma\backslash D)_f$ with holonomy around the boundary equal to
$e^{\frac{2\pi\sqrt{-1}d}{m}}\,\Id_{\C^m}\in\SU(m)$, and let $\MM_f$
be the associated moduli space defined as above.
As all $A\in\AA$ are irreducible, the same is true for all $A_f\in\AA_f$. By assumption, the analogue of \cref{MM} applies to $\MM_f$, so that it is smooth and its
tangent space at any $[A_f]\in\MM_f$ identifies with
the first cohomology group $H^1_{A_f}(\Sigma_f)$
of $(\Om^\bullet_{A_f}(\Sigma_f),d_{A_f})$ defined as above.
Following \cite[\S\,5.2.1]{Jef92} and
\cite[\S\,8.1]{Cha16}, consider the map
$r:\MM_f\fl\MM$ defined by restriction over any fibre of \cref{maptor},
which satisfies $\Im\,r=\MM^\varphi$ and $\#\,r^{-1}([A])=m$, where
$\MM^\varphi$
denotes the fixed point set of $\varphi:\MM\fl\MM$
defined in \cref{varphif}.
%

For any $A_f\in\AA_f$,
let $A\in\AA$ be the restriction of $A_f$ to any fibre
of \cref{maptor},
so that $r([A_f])=[A]$.
Recall as in \cite[p.\,359]{Hit90} that the
irreducibility of $A\in\AA$
implies that $H^0_A(\Sigma)=H^2_A(\Sigma)=\{0\}$.
Let $dt$ be the canonical volume form of $\R/\Z$.
Following \cite[(4.2)]{And13},
we have the following long exact sequence in cohomology
\begin{equation}\label{Wang}
0\longrightarrow H^1_{A_\varphi}(\Sigma_\varphi)\xrightarrow{\,~~r^*~~\,}
H^1_{A}(\Sigma)\xrightarrow{~\Id-f^*~}
H^1_{A}(\Sigma)\xrightarrow{\bullet\,\wedge\,\pi_f^*dt\,}
H^2_{A_\varphi}(\Sigma_\varphi)\longrightarrow 0,
\end{equation}
where the first map is induced by restriction on any fibre of
\cref{maptor}, and thus identifies with the differential of
$r:\MM_f\fl\MM$ via \cref{tgtMM}. This shows that
$r:\MM_f\fl\MM$ is a smooth immersion,
and thus a smooth $m$-covering on its image $\Im\,r=\MM^\varphi$,
which is smooth as well.
Furthermore, the exactness of \cref{Wang} together with
\cref{tgtMM,varphif} implies
that $T\MM^\varphi=\Ker(\Id_{T\MM}-d\varphi)$.
Thus the fixed point set
of $\varphi$ is non-degenerate, and we can apply \cref{indeq} to this
situation.
The asymptotic expansion \cref{WAC} is then an immediate consequence
of \cref{indeq}, where the locally constant value of $\varphi^L$
restricted to $\MM^\varphi$ has been computed in \cite[Prop.8.4]{Cha16}
to be equal to \cref{corCS}. The densities over $\MM_f$ are obtained by pullback by $r:\MM_f\fl\MM^\varphi$.

Let us now show \cref{unWRTexp}.
Restrict the prequantized fibration
$\pi:\cT_\Sigma\times\MM\fl\cT_\Sigma$ over the path
$\gamma:[0,1]\fl\cT_\Sigma$ as in \cref{trivfib}, and
let $[A]\in\MM$ be fixed by $\varphi$.
%
%
Then under the assumptions of \cref{coeffgeom},
with $E=\C$ and $\tau^{E,\sigma}$ determined by
\cref{hitchintoep},
we get over any connected component of $\MM^\varphi$ of dimension
$2d$ and
for some coherent choices of square roots, 
\begin{equation}\label{nu0MM}
\nu_0=(-1)^{\frac{n-d}{2}}(\varphi^{K_\MM}
\tau^{Q,-1})^{\frac{1}{2}}
\,|\det{}_N(\Id_{N}-d\varphi|_N)|^{-\frac{1}{2}}\,|dv|_{T\MM/N},
\end{equation}
where $\tau^Q$ is the parallel
transport on $K^\MM$ with respect to \eqref{nabQ} and $|dv|_{T\MM/N}$
is the density over $\MM^\varphi$ satisfying \cref{dvX=dvNdvXN}.
Following \cite[Cor.4.3]{AH12}, the \emph{Reidemeister torsion}
$\tau_{\Sigma_\varphi}(\ad A_\varphi)$ of $\ad A_\varphi$
over $\Sigma_\varphi$
is equal to the torsion of the complex \cref{Wang},
which is identified with an element of
$\det H^1_{A_\varphi}(\Sigma_\varphi)^{-2}$ via Poincaré duality.
Thus the absolute value $|\tau_{\Sigma_\varphi}(\ad A_\varphi)
|^{\frac{1}{2}}$ can be identified with a density over $\MM^\varphi$.
Following \cite[Th.5.6]{AH12} and using the fact that $\Id-d\varphi$
preserves a transverse subbundle $N$ of $T\MM^\varphi$ as in
\cite[Prop.5.4]{AH12}, we deduce from \cref{dvX=dvNdvXN}
and \cref{Wang} that
\begin{equation}\label{torsReid}
|\tau_{\Sigma_\varphi}|^{\frac{1}{2}}=|\det{}_{N}
(\Id_{N}-d\varphi|_N)|^{-\frac{1}{2}}\,|dv|_{T\MM/N}.
\end{equation}
%
%
Now let $d_A+d_A^*$ be the
\emph{even signature operator} on
$\Om^\bullet_A(\Sigma,\mathfrak{su}(m))$, where $d_A^*$ is the dual 
of $d_A$ with respect to \cref{Hodgemet}.
By a construction of
\cite{BF86a}, we can consider the associated determinant line bundle
over $\cT_\Sigma\times\MM$, equipped with the
\emph{Bismut-Freed connection} of \cite[Def.1.17]{BF86a}.
We then have the following lemma, which follows
from an argument of \cite[\S\,4,\,(4.12)]{Ati87} using
\cref{hitMM} and \cite{BGS88c}.
\begin{lem}\label{QBF}
The square $K_\MM^2$ of the relative canonical line bundle of
$\pi:\cT_\Sigma\times\MM\fl\cT_\Sigma$ is
isomorphic to the determinant line bundle of the even signature operator
over $\cT_\Sigma\times\MM$. Furthemore, this isomorphism sends
the connection induced by $\nabla^Q$ on $K_\MM^2$ to the
Bismut-Freed connection.
\end{lem}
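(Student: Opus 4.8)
The plan is to reduce the even signature operator on $\Sigma$, twisted by the flat bundle $\ad P$, to a $\dbar$‑type operator by Hodge theory, so that the construction of its determinant line bundle with the Bismut–Freed connection falls within the Bismut–Gillet–Soulé framework that already underlies \cref{hitMM} and the definition of $\nabla^Q$. The whole chain of identifications is packaged in \cite[\S\,4,\,(4.12)]{Ati87}; the work here is to check it applies fibrewise over $\Tau_\Sigma\times\MM$.

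First I would fix $([*],[A])\in\Tau_\Sigma\times\MM$ and use the $\pm\sqrt{-1}$‑eigenspace splitting of $T\Sigma\otimes\C$ determined by $*$ (as in \cref{splitc}) together with Hodge theory to decompose the twisted de Rham complex $(\Om^\bullet(\Sigma,\mathfrak{su}(m)\otimes\C),d_A)$. Since $d_A$ is the flat unitary connection attached to $A$, the Kähler identities on $\Sigma$ hold, giving $\Delta_{d_A}=2\Delta_{\dbar_A}$ and the Hodge decomposition $H^k_A(\Sigma)\otimes\C=\bigoplus_{p+q=k}\mathcal H^{p,q}$, with $\mathcal H^{0,1}\cong H^1(\Sigma,\ad P)$ and $\mathcal H^{1,0}\cong H^0(\Sigma,\ad P\otimes K_\Sigma)$ via $\dbar_A$. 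By irreducibility of $A$ (\cref{MM}) we have $\mathcal H^{0,0}=\mathcal H^{1,1}=0$, so $\ker$ and $\coker$ of the even signature operator $d_A+d_A^*$ reduce to $\mathcal H^{1,0}\oplus\mathcal H^{0,1}$; using $\Delta_{d_A}=2\Delta_{\dbar_A}$ this identifies $d_A+d_A^*$, up to a unitary bundle isomorphism and the harmless rescaling by $\sqrt{2}$, with $\dbar_A+\dbar_A^*$ acting on the direct sum of the two Dolbeault complexes of $\ad P$ and $\ad P\otimes K_\Sigma$. Hence its determinant line bundle is canonically $\lambda_{\dbar}(\ad P)\otimes\lambda_{\dbar}(\ad P\otimes K_\Sigma)$ in families over $\Tau_\Sigma\times\MM$, the splitting being governed by the varying complex structures of the fibres.

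Next I would collapse this to a square. Serre duality gives $H^1(\Sigma,\ad P\otimes K_\Sigma)\cong H^0(\Sigma,\ad P^\vee)^\vee=0$ and $H^0(\Sigma,\ad P\otimes K_\Sigma)\cong H^1(\Sigma,\ad P^\vee)^\vee$, which together with the self‑duality $\ad P^\vee\cong\ad P$ from the Killing form identifies $\lambda_{\dbar}(\ad P\otimes K_\Sigma)$ with $\lambda_{\dbar}(\ad P)$; combined with \cref{hitMM}, which identifies $\lambda_{\dbar}(\ad P)$ with $K_\MM^{-1}$, this produces an isomorphism of the determinant line bundle of the even signature operator with $K_\MM^2$. (The isomorphism is not holomorphic — the signature determinant line is holomorphically trivial — so the content is in the metric and connection.) For those, the Bismut–Freed connection is the Chern connection of the Bismut–Freed $L^2$/$\zeta$‑determinant metric of \cite[Def.1.17]{BF86a}, and both the metric and this Chern connection are functorial under the above direct‑sum decomposition of the complex and under Serre duality — this is the anomaly/functoriality machinery of \cite{BGS88c}. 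Under it the Bismut–Freed connection transports to the tensor square of the Bismut–Gillet–Soulé (Quillen) connection on $\lambda_{\dbar}(\ad P)$; by \cref{nabQ}, $\nabla^Q$ is the Chern connection of the Quillen metric $h^Q=e^{-\rho}h^{K_\MM}$, so by \cref{hitMM} that Quillen connection is the dual of $\nabla^Q$ on $K_\MM$, and its square is precisely the connection induced by $\nabla^Q$ on $K_\MM^2$. This gives both assertions.

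The main obstacle will be the careful bookkeeping in this reduction: making the unitary identification of $(\Om^\bullet(\Sigma,\mathfrak{su}(m)),d_A+d_A^*)$ with the Dolbeault operators compatible with the full family structure over $\Tau_\Sigma\times\MM$, tracking the $\sqrt{2}$‑rescaling and the effect of Serre duality on the Quillen metric (which is where any residual sign or conjugation factor lives), and confirming that \cite[Def.1.17]{BF86a} and \cite{BGS88c} really give the same connection on the overlap — the key being that after Hodge theory the even signature operator on a Riemann surface is literally a sum of $\dbar$‑operators, so the two constructions coincide. One must also verify that the hypotheses of \cite[\S\,4,\,(4.12)]{Ati87} are met here, which they are: $\Sigma$ is a closed oriented surface and $\ad P$ carries the flat connection $d_A$ with $\Ad$‑invariant, hence orthogonal, holonomy.
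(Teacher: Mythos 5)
Your proposal follows essentially the same route the paper intends: the paper gives no independent argument for \cref{QBF}, referring instead to Atiyah \cite[\S\,4,\,(4.12)]{Ati87} together with \cref{hitMM} and the Quillen/Bismut--Freed machinery of \cite{BGS88c}. Your plan (Hodge--K\"ahler reduction of the even signature operator to a sum of Dolbeault complexes on $\ad P$ and $\ad P\otimes K_\Sigma$, Serre duality with the Killing form to collapse to a square, then functoriality of the BGS Quillen metric and Bismut--Freed connection to match $\nabla^Q$ on $K_\MM^2$) is precisely the argument of \cite{Ati87} adapted to the twisted family over $\Tau_\Sigma\times\MM$, and you correctly identify irreducibility of $A$ as the ingredient making $H^0_A=H^2_A=0$ so that the reduction is clean.

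One arithmetic slip you should reconcile: you decompose the signature determinant as $\lambda_{\dbar}(\ad P)\otimes\lambda_{\dbar}(\ad P\otimes K_\Sigma)\cong\lambda_{\dbar}(\ad P)^{\otimes 2}$ and then invoke \cref{hitMM}, which says $\lambda_{\dbar}(\ad P)\cong K_\MM^{-1}$, to conclude the result is $K_\MM^2$; as written this gives $K_\MM^{-2}$. The fix is a convention choice: either the determinant line of the even signature operator (as a map $\Omega^{\mathrm{even}}\to\Omega^{\mathrm{odd}}$) should be taken with the opposite of your stated convention, so that it is $\lambda_{\dbar}^{-2}$, or one should track carefully which of the two dual conventions \cite{BGS88c} uses for determinant line bundles, since \cref{hitMM} already phrases its conclusion as ``dual of the determinant line bundle.'' You flag ``residual sign or conjugation factor'' as a bookkeeping issue, and this is exactly where it sits; it does not affect the structure of the argument, but since the lemma asserts an isomorphism (not just a projective one), the orientation of the identification does matter and should be pinned down before the connection-level statement is drawn.
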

Then as explained in
\cite[\S\,4]{Ati87}, the holonomy theorem of
\cite[Th.3.16]{BF86b} implies that the parallel transport with respect
to the Bismut-Freed connection over the loop $(\gamma,[A])$
as above is given by $\exp(-\sqrt{-1}\pi\eta^0(A_f))$,
where $\eta^0(A_f)$ is the limit as $\epsilon\fl 0$
of the $\eta$-invariant of the \emph{odd signature operator}
$(-1)^k(d_{A_f}*_\epsilon+*_\epsilon d_{A_f})$ acting on
the odd forms
$\oplus_{k=1}^2\Om^{2k-1}_{A_f}(\Sigma_f,\mathfrak{su}(m))$,
where $*_\epsilon$ is the Hodge-star operator of the metric
$g^{T\Sigma}\oplus \epsilon^{-2} g^{T\R}$
constructed via \cref{split}
over the fibration $\pi_f:\Sigma_f\fl\R/\Z$.
By \cref{QBF}, this shows that
\begin{equation}\label{etacompute}
(-1)^{\frac{n-d}{2}}(\varphi^{K_\MM}\tau^{Q,-1})^{\frac{1}{2}}
=(\sqrt{-1})^k
\exp\left(\frac{\sqrt{-1}\pi}{4}\eta^0(\ad A_\varphi)\right),
\end{equation}
with $k\in\Z/4\Z$ locally constant by continuity of the other terms.
%
Then the integral
over $\MM_f$ descends to $m$ times an integral over $\MM^\varphi$ via
the $m$-covering $r:\MM_f\fl\MM^\varphi$.
This completes the proof of \cref{unWRTexp}.

Let us finally show \cref{WRTintro2}.
Taking the expansion of the fractional power in $p\in\N^*$,
the associated asymptotic expansion as in \cref{WAC}
follows from \cref{indeq} as above, and
to compute the first coefficient \cref{WRTexp}, we can assume that
$E=\pi^*\det(\dbar_\Sigma)^{-\frac{m^2-1}{2}}$. Using \cite[\S\,4]{Ati87} and
\cite{BGS88c} as above, the analogue of \cref{QBF} holds for $\det(\dbar_\Sigma)$
over $\cT_\Sigma$ instead of $K_\MM$, replacing $d_A$ by the usual
exterior differential $d$ on $\Om^\bullet(\Sigma,\C)$.
Thus the parallel transport in $E$ along $\gamma$ with respect to
the connection induced by \eqref{nabQ} is given by
$(\i)^k\exp(-\frac{\sqrt{-1}\pi}{4}(m^2-1)\eta^0(0))$
for some $k\in\Z/4\Z$, where $\eta^0(0)$ is the limit as
$\epsilon\fl 0$ of the $\eta$-invariant
of the usual odd signature operator
$(-1)^k(d*_{\epsilon}+*_{\epsilon}d)$ acting
on $\oplus_{k=1}^2\Om^{2k-1}(\Sigma_f,\C)$.
By definition of the $\rho$-invariant in
\cite[Th.2.4]{APS75b} and using $\dim\mathfrak{su}(m)=m^2-1$,
we get
\begin{equation}\label{rhodef}
\rho(\ad A_\varphi)=\eta^0(\ad A_\varphi)-(m^2-1)\eta^0(0).
\end{equation}
This completes the proof of \cref{WRTexp}.
\end{proof}

\providecommand{\bysame}{\leavevmode\hbox to3em{\hrulefill}\thinspace}
\providecommand{\MR}{\relax\ifhmode\unskip\space\fi MR }
\providecommand{\MRhref}[2]{%
  \href{http://www.ams.org/mathscinet-getitem?mr=#1}{#2}
}
\providecommand{\href}[2]{#2}

\ \\
Tel Aviv University - School of Mathematical Sciences,\\
Ramat Aviv, Tel Aviv 69978, Israël\\
\\
\emph{E-mail adress}: louisioos@mail.tau.ac.il

\end{document}